\renewcommand{\vec}[1]{\mbox{\boldmath \small $#1$}}
\newtheorem{lemma}{Lemma}[section]
\newtheorem{corollary}{Corollary}[section]
\newtheorem{remark}{Remark}[section]
\numberwithin{equation}{section}
\numberwithin{figure}{section}
\numberwithin{table}{section}
\numberwithin{thm}{section}
\newenvironment{proof}[1][Proof]{\begin{trivlist}
\item[\hskip \labelsep {\bfseries #1}]}{\end{trivlist}}
\renewcommand{\qed}{\hfill \nobreak \ifvmode \relax \else
      \ifdim\lastskip<1.5em \hskip-\lastskip
      \hskip1.5em plus0em minus0.5em \fi \nobreak
      \vrule height0.75em width0.5em depth0.25em\fi}
\begin{document}

\begin{frontmatter}
\title{Globally hyperbolic moment model of arbitrary order  for
	three-dimensional special relativistic Boltzmann equation}

\author{Yangyu Kuang}
\ead{kyy@pku.edu.cn}
\address{School of Mathematical Sciences, Peking University,
Beijing 100871, P.R. China}
\author[label2]{Huazhong Tang}
\thanks[label2]{Corresponding author. Tel:~+86-10-62757018;
Fax:~+86-10-62751801.}
\ead{hztang@math.pku.edu.cn}
\address{HEDPS, CAPT \& LMAM, School of Mathematical Sciences, Peking University,
Beijing 100871, P.R. China; School of Mathematics and Computational Science, Xiangtan University,
Xiantan 411105, Hunan Province, P.R. China}
 \date{\today{}}
\maketitle

\begin{abstract}
This paper extends the model reduction method by the operator projection
to the three-dimensional special relativistic Boltzmann equation.
 The derivation of arbitrary order  moment system is built on
our careful study of infinite families of the complicate
Grad type orthogonal polynomials depending on a parameter and the real spherical harmonics.
We derive the recurrence relations of the polynomials, calculate
their derivatives with respect to the independent variable and parameter respectively,
and study their zeros. 
The recurrence relations and partial derivatives of the real spherical harmonics are also
given.
It is proved that our moment system is globally hyperbolic,
 and linearly stable. Moreover, the Lorentz covariance  is also studied in the 1D space.
\end{abstract}

\begin{keyword}
Moment method, hyperbolicity, relativistic Boltzmann equation, model reduction,
operator projection.
\end{keyword}
\end{frontmatter}


\section{Introduction}
\label{sec:intro}

The relativistic kinetic theory of gases has been widely  applied in the astrophysics and cosmology \cite{RB:2002,RK:1980}. 
Different from a non-relativistic monatomic gas,  a relativistic gas has
a bulk viscosity. It has called much attention  to a number of applications of this theory: the
effect of neutrino viscosity on the evolution of the universe
 and the study of galaxy
formation, neutron stars, and controlled thermonuclear fusion etc.,
%
%
and the interest has grown in recent years as experimentalists are now able to make reliable measurements on physical systems where relativistic effects are no longer negligible.
%
%
However, the relativistic kinetic theory
 has been sparsely used to model phenomenological matter in comparison to fluid models.
In the non-relativistic case, the kinetic theory has been intensively  studied as a mathematical subject during several decades, and   also played an important role from an engineering point of view,
see e.g. \cite{RB:1988,Cha:1970}.
One could determine the  distribution function
and  transport coefficients of  gases from the Boltzmann equation, however it was not so easy.
An  approximate solution of the integro-differential
equation was obtained by  Hilbert  from a power series expansion of a parameter
(being proportional to the mean free path).
 The transport coefficients were independently  calculated by Chapman and Enskog   for gases whose molecules interacted according to any kind of spherically symmetric potential function.
Another method   is to
expand the distribution
function in terms of tensorial Hermite polynomials and introduce
the balance equations corresponding to higher order moments of the distribution function \cite{GRAD:1949,GRAD2:1949}.
%
%
The   Chapman-Enskog  method
 assumes that in the hydrodynamic regime
the distribution function can be expressed as a function of the hydrodynamic variables and their gradients,
%
and has been
extended to the relativistic cases, see e.g. \cite{CHA2:2008,CHA:2012,CHA:2008,NHH:1983,NHH:1985},
but unfortunately, there exists difficulty  to derive the  relativistic hydrodynamic equations from the kinetic theory \cite{DM:2012}.
The moment method  is  also generalized to the relativistic cases,
see e.g. \cite{Ander:1970,MLL:2006,IS:1976,IS:1979,IS2:1979,Kranys:1972,Stewart:1977}.
Combining the Chapman-Enskog method with
the moment method has been attempted   in order to state the influence of the Knudsen number \cite{DM:2012,RDH:2013}.
%
 Recently, some latest progresses were gotten on the Grad moment method  in the non-relativistic case.
  A regularization  was presented  for the 1D Grad moment system to achieve global hyperbolicity \cite{1DB:2013}.
It was based on the observation that the characteristic polynomial of the flux Jacobian
of the Grad's moment system did not depend on the intermediate moments,
 and  further extended to the multi-dimensional case \cite{HY:2014,HG:2014}.
By  replacing the exact integration with  some quadrature rule,
the quadrature based projection methods could derive hyperbolic PDE systems for  the Boltzmann equation
  \cite{QI:2014,HG2:2014}.
In the 1D case,  it is similar to the  regularization in  \cite{1DB:2013}. 
Those contributions  led to  well understanding the hyperbolicity of  the Grad moment systems.
Based on the operator projection,  a general framework of model reduction technique was recently presented  \cite{MR:2014}, in which
the time and space derivatives in the kinetic equation  were synchronously projected  into a finite-dimensional weighted polynomial space,
and most of the existing moment systems mentioned above might be regotten.
 %
%

There exists a huge difficulty in deriving the relativistic hyperbolic moment system  of higher order
since the family of Grad type orthogonal polynomials can not be found easily.
Several attempts  have been conducted to construct them analogous to the Hermite polynomials, see e.g \cite{GRADP:1974,RK:1980},
%
where the weighted polynomial space was spanned by the products of the weighted function,
Grad type orthogonal polynomials, and  irreducible tensors.
Their application can be found in \cite{DM:2012,RDH:2013,PRO:1998}.
%
%
Unfortunately,
there seems no explicit expression of the moment systems
 if the order of the moment system is larger than 3, and
 the hyperbolicity of  existing general moment systems is not proved,
 even for the  second order moment system (e.g. the general Israel and Stewart system).
 For a special  case with heat conduction and no viscosity,
 Hiscock and Lindblom proved that the Israel and Stewart moment system  was globally hyperbolic and linearly stable in  the
  Landau frame, but converse  in  the Eckart frame.
     In the general case, they only proved that the Israel and Stewart moment system
     was hyperbolic near the equilibrium. The readers are referred to \cite{NHH:1983,NHH:1988,NNH:1989}.
It is easy to show by the approach \cite{NHH:1987,NHH:1988} that  the Israel and Stewart moment system
 is not globally hyperbolic  in the Landau frame too if  the viscosity exists. There does not exist any result on the hyperbolicity or  loss of hyperbolicity  of  (existing) general higher-order moment systems for the relativistic kinetic equation. Such
proof is very difficult and challenging.
The loss of hyperbolicity may  cause the solution blow-up when the distribution is far away from the equilibrium state. Even for the non-relativistic case,  increasing the number of moments seems not to avoid  such blow-up \cite{FAR:2012}.
Recently,  globally hyperbolic moment system of arbitrary order was derived for the 1D special relativistic Boltzmann equation   \cite{Kuang:2017} with the aid of
 the model reduction method by the operator projection \cite{MR:2014}.
 The key is to  choose the weight function and define the weighted polynomial spaces and their basis as well as the projection operator.

  The aim of this paper is to derive globally hyperbolic moment system of arbitrary order for the three-dimensional special relativistic Boltzmann equation.
 Unlike the 1D case \cite{Kuang:2017}, the basis of the weighted polynomial space are very difficult to be obtained, because  the irreducible tensors \cite{RK:1980} are linearly dependent, and
it is difficult to find their maximally linearly independent set.
Our contribution are that the irreducible tensors are replaced with the real spherical harmonics
in order to find the basis of  weighted polynomial spaces and  the properties of infinite families of the complicate Grad type orthogonal polynomials depending on a parameter and  the real spherical harmonics are carefully studied. 
The paper is organized as follows.
Section \ref{sec:RB} introduces the special relativistic Boltzmann equation
and some macroscopic quantities defined via the kinetic theory.
 Section \ref{sec:orth} gives infinite families of orthogonal polynomials dependent on a parameter,
and studies their properties: recurrence relations, derivative relations with respect to the variable and the parameter, and zeros. Section \ref{sec:SH} introduces  the real  spherical harmonics and their properties. Section \ref{sec:moment} derives the moment system of the special relativistic Boltzmann equation and discusses  its limitation  and corresponding quasi-1D cases. 
Section \ref{sec:prop} studies the properties of our moment system:  the hyperbolicity,  linear stability, and Lorentz covariance.
Section \ref{sec:conclud} concludes the paper.

\section{Preliminaries and notations}  
\label{sec:RB}
In the special relativistic kinetic theory of gases \cite{RB:2002},
a microscopic gas particle of rest  mass $m$ is characterized by the $4$ space-time
coordinates $(x^{\alpha})=(x^0,\vec{x})$ and momentum $4$-vector $(p^{\alpha})=(p^{0},\vec{p})$, where
$x^0=ct$, $c$ denotes the speed of light in vacuum, and $t$ and $\vec{x}$ are the time and 3D spatial coordinates,
 respectively. 
Besides  the contravariant notation (e.g. $p^{\alpha}$),  the covariant notation such as $p_{\alpha}$
 will  also be used and is related to the contravariant by
 $
 p_{\alpha}=g_{\alpha\beta}p^{\beta}$ or $ p^{\alpha}=g^{\alpha\beta}p_{\beta}$,
%
%
where $(g^{\alpha\beta})$ is chosen as the  Minkowski  space-time metric tensor,
 $(g^{\alpha\beta})={\rm diag}\{1,-\vec{I}_{3}\}$,
$\vec{I}_{3}$ is the unit matrix of size 3, $(g_{\alpha\beta})$ is
the inverse of $(g^{\alpha\beta})$, and the Einstein summation convention over repeated indices has been used.
%
For a free relativistic particle,  one has the relativistic energy-momentum
relation (aka ``on-shell'' or ``mass-shell'' condition)
$E^2-\vec p^2 c^2=m^2 c^4$. If putting $p^0= c^{-1}E=\sqrt{\vec{p}^2+m^2c^2}$,
then the ``mass-shell'' condition may be rewritten as
$p^{\alpha}p_{\alpha}=m^2c^2$.

The special relativistic Boltzmann equation provides a statistical description of a gas of relativistic particles that are interacting through binary collisions in Minkowski space \cite{RB:2002}.
%
%
For a single gas it reads
\begin{equation}
\label{eq:Boltz}
p^{\alpha}\frac{\partial f}{\partial x^{\alpha}}=Q(f,f),
\end{equation}
where $f=f(\vec x, \vec p, t)$ denotes one-particle distribution function,
 $Q(f,f)$ is the collision term
  depending  on the product of the distribution functions of two particles at collision, e.g.
$$
Q(f,f)=\int_{\mathbb{R}^{{3}}}\int_{\mathbb{S}^{{2}}_{+}}\left(f_{*}'f'-f_{*}f\right)B d{\Omega} \frac{d^{{3}}\vec{p}_{*}}{p_{*}^{0}},
$$
here the distributions $f$ and $f_*$  depend on  the momenta before a collision, while
 $f'$ and $f_*'$  rely on the momenta after the collision,
 $d\Omega$ denotes the element of the solid angle,
the collision kernel is given by $B=\sigma \sqrt{(p_{*}^{\alpha}p_{\alpha})^{2}-m^2c^2}$ for a single non degenerate gas (e.g. electron gas),
and $\sigma$ denotes the differential cross section of collision.
The collision term  satisfies
\begin{equation}
\label{eq:colcon}
    \int_{\mathbb{R}^{{3}}}Q(f,f)\frac{d^{3}\vec{p}}{p^{0}}=0, \quad \int_{\mathbb{R}^{{3}}}p^{\alpha}Q(f,f)\frac{d^{3}\vec{p}}{p^{0}}=0,
\end{equation}
so that  $1$ and $p^{\alpha}$ are called {\em collision invariants}.
%
Moreover,   \eqref{eq:Boltz}  satisfies
the entropy dissipation relation (in the sense of classical statistics)  
\[
\int_{\mathbb{R}^{3}}Q(f,f)\ln(f)\frac{d^3\vec p}{p^{0}}\leq0,
\]
where the  equal sign corresponds to the local thermodynamic equilibrium.

The macroscopic description of gas can
be represented by the first and second moments of the distribution function $f$, namely,
the partial particle 4-flow $N^\alpha$
and the partial energy-momentum tensor
$T^{\alpha\beta}$,  defined respectively  by
%
\begin{equation}
\label{eq:NTab}
  N^{\alpha}=c\int_{{\mathbb{R}^{3}}} p^{\alpha}f\frac{d^{3}\vec{p}}{p^{0}},\quad T^{\alpha\beta}=c\int_{{\mathbb{R}^{3}}} p^{\alpha}p^{\beta}f\frac{d^{3}\vec{p}}{p^{0}},
\end{equation}
which  have the Landau-Lifshitz decompositions
\begin{align*}
N^{\alpha}&=nU^{\alpha}+n^{\alpha},
\ \
T^{\alpha\beta}=c^{-2}\varepsilon U^{\alpha}U^{\beta}-\Delta^{\alpha\beta}(P_{0}+\Pi)+ \pi^{\alpha\beta}, 
\end{align*}
where $(U^{\alpha})=\left(\gamma(\vec{u}) c, \gamma(\vec{u})\vec u\right) $
denotes the macroscopic velocity $4$-vector of gas,
$\gamma(\vec{u})=(1-c^{-2}|\vec{u}|^{2})^{-\frac{1}{2}}$
is the Lorentz factor, $\Delta^{\alpha\beta}$ is   defined by
\begin{equation*}
\Delta^{\alpha\beta}:=g^{\alpha\beta}-c^{-2}U^{\alpha}U^{\beta},
\end{equation*}
which is  a symmetric projector onto the 3D subspace and orthogonal to $U^\alpha$, i.e. $\Delta^{\alpha\beta}U_{\beta}=0$.
Here,  the number density $n$, the particle-diffusion current $n^{\alpha}$, the energy density $\varepsilon$, the shear-stress tensor $\pi^{\alpha\beta}$, and the sum of thermodynamic pressure $P_{0}$ and bulk viscous pressure $\Pi$ are   related to the distribution $f$ by
\begin{equation}
\label{eq:variable}
\begin{aligned}
&n:=c^{-2}U_{\alpha}N^{\alpha}=c^{-1}\int_{\mathbb{R}^{3}} Ef\frac{ d^{3}\vec{p} }{p^{0}},\\
&n^{\alpha}:=\Delta^{\alpha}_{\beta}N^{\beta}=c\int_{\mathbb{R}^{3}} p^{<\alpha>}f\frac{d^{3}\vec{p}}{p^{0}},\\
&\varepsilon:=c^{-2}U_{\alpha}U_{\beta}T^{\alpha\beta}=c^{-1}\int_{\mathbb{R}^{3}} E^2f\frac{ d^{3}\vec{p}}{p^{0}},\\
&\pi^{\alpha\beta}:= \Delta_{\mu\nu}^{\alpha\beta} T^{\mu\nu}=c\int_{\mathbb{R}^{3}} p^{<\alpha }p^{\beta>}f\frac{ d^{3}\vec{p} }{p^{0}},\\
&P_{0}+\Pi:=-\frac{1}{3}\Delta_{\alpha\beta}T^{\alpha\beta}=\frac{1}{3c}\int_{\mathbb{R}^{3}} (E^2-m^2c^4)f\frac{ d^{3}\vec{p}}{p^{0}},
\end{aligned}
\end{equation}
where  $E:=U_{\alpha}p^{\alpha}$, $p^{<\alpha>}:=\Delta_{\beta}^{\alpha}p^{\beta}$
and $p^{<\alpha} p^{\beta>}:=\Delta^{\alpha\beta}_{\mu\nu}{p^{\mu}p^{\nu}}$
are the first and second order irreducible tensors, respectively,
 and
 \[
 \Delta^{\alpha\beta}_{\mu\nu}:= \frac{1}{2}\left( \Delta_{\mu}^{\alpha}\Delta_{\nu}^{\beta}+ \Delta_{\mu}^{\beta}\Delta_{\nu}^{\alpha}-\frac{2}{3}\Delta_{\mu\nu}\Delta^{\alpha\beta} \right).
 \]
At the local thermodynamic equilibrium,
$n^\alpha$, $\Pi$, and $\pi^{\alpha\beta}$
 	will be zero.
Multiplying   \eqref{eq:Boltz}
by $1$ and $p^{\alpha}$ respectively, integrating them over $\mathbb{R}^{3}$ with respect to
$\vec{p}$, and using \eqref{eq:colcon} can give the following conservation laws
\begin{equation}
\label{eq:Tab}
    \partial_{\alpha} N^{\alpha}=0,\quad
    \partial_{\alpha} T^{\alpha\beta}=0.
\end{equation}
The present work chooses the
macroscopic velocity 4-vector $U^{\alpha}$    as the velocity of the energy transport
   (the Landau-Lifshitz frame \cite{Lau:1949})
  \begin{equation}
  \label{eq:landau}
  U_{\beta}T^{\alpha\beta}=\varepsilon U^{\alpha},
  \end{equation}
  i.e.
   	\begin{equation}
   	\label{eq:condition-222}
   	\Delta^{\alpha}_{\beta}T^{\beta\gamma}U_{\gamma}=c\int_{{\mathbb{R}^{3}}} Ep^{<\alpha>}f\frac{d^3\vec p}{p^{0}}=0,
   	\end{equation}
and considers the Anderson-Witting model \cite{AW:1974}
 	\begin{equation}
 	Q(f,f)=-\frac{U_{\alpha}p^{\alpha}}{\tau c^2}(f-f^{(0)}),\label{eq:colAW}
 	\end{equation}
 	where  $f^{(0)}=f^{(0)}(\vec x, \vec p, t)$  denotes the distribution function at  the local thermodynamic equilibrium, and the relaxation time
 	$\tau$  may rely on $n$ and $\theta$ and be  defined by
$
\tau=\frac{1}{n\pi d^2\bar{g}}$,
 $d$ denotes the diameter of gas particles,
and $\bar{g}$ is proportional to the mean relative speed $\bar{\xi}$  between two particles, e.g.
$\bar{g}=\sqrt{2}\bar{\xi}$ or $\bar{\xi}$ \cite{RB:2002}.
%
%
%
%
%
%
The local-equilibrium distribution $f^{(0)}$ can be explicitly given by
the  Maxwell-J\"{u}ttner distribution
\begin{equation}
  \label{eq:equm1}
   f^{(0)}=n g^{(0)}, \ g^{(0)}=\frac{\zeta}{4\pi m^{3}c^{3}K_{2}(\zeta)}\exp\left(-m^{-1}c^{-2}\zeta E\right),
\end{equation}
which
can completely determine  the number density $n$ and energy  density $\varepsilon$
by
\begin{equation}
\label{eq:condition}
\begin{aligned}
   n = &n_{0}:=c^{-1}\int_{\mathbb{R}} Ef^{(0)}\frac{d^{3}\vec{p}}{p^{0}},\
\varepsilon=\varepsilon_{0}:=c^{-1}\int_{\mathbb{R}} E^2f^{(0)}\frac{d^3\vec{p}}{p^{0}}=nm c^2\left(G(\zeta)-\zeta^{-1}\right),
\end{aligned}
\end{equation}
where $G(\zeta):=K_{2}^{-1}(\zeta)K_{3}(\zeta)$,
 $\zeta={(k_{B}T)^{-1}(mc^2)}$ is  the ratio between the particle rest energy $mc^2$
and the thermal energy of the gas $k_BT$, $k_B$ denotes the Boltzmann constant,  $T$ is the {thermodynamic} temperature,
and $K_{\nu}(\zeta)$ denotes the modified Bessel function of the second kind, defined by
\begin{equation}
\label{eq:bessel}
   K_{\nu}(\zeta)=\int_{0}^{\infty}\cosh(\nu\vartheta)\exp(-\zeta\cosh\vartheta)d\vartheta,
\end{equation}
satisfying the recurrence relation
\begin{equation}
\label{eq:besselrec}
   K_{\nu+1}(\zeta)=K_{\nu-1}(\zeta)+2\nu\zeta^{-1}K_{\nu}(\zeta).
\end{equation}
The particles behave as non-relativistic (resp. ultra-relativistic) for $\zeta\gg 1$
(resp. $\zeta\ll 1$).

It is  possible to determine the other macroscopic quantities  from the knowledge of
  $f^{(0)}$ by
\begin{equation}
 \label{eq:variable0}
    \begin{aligned}
    &n^{\alpha}_{0}:=c\int_{\mathbb{R}^3} p^{<\alpha>}f^{(0)}\frac{d^3\vec{p}}{p^{0}}=0,\
    P_{0}:=\frac{1}{3c}\int_{\mathbb{R}^3} (E^2-m^2c^4)f^{(0)}\frac{d^3\vec{p}}{p^{0}}=n k_BT=n mc^2\zeta^{-1},
    \end{aligned}
\end{equation}
and rewrite the conservation laws \eqref{eq:Tab} into the following form
\begin{equation}
\label{eq:conser}
\begin{aligned}
&\frac{\partial \left(n U^{\alpha}\right)}{\partial x^{\alpha}}=0,\ \
\frac{\partial \left(c^{-2}n h U^{\alpha}U^{\beta}-g^{\alpha\beta}P_{0}\right)}{\partial x^{\beta}}=0,
\end{aligned}
\end{equation}
where $h:=n^{-1}(\varepsilon+P_{0})=mc^2G(\zeta)$ denotes the specific enthalpy.
It means that the special relativistic hydrodynamic equations \eqref{eq:conser} can be derived from  the special relativistic Boltzmann equation \eqref{eq:Boltz} when $f=f^{(0)}$.
How to find the reduced model equations to describe the states with $f\neq f^{(0)}$?
This paper attempts to answer it and derive its globally hyperbolic  moment model of arbitrary order by extending the moment method by operator projection \cite{MR:2014} and \cite{Kuang:2017}
to \eqref{eq:Boltz}, see Section \ref{sec:moment}.

Before ending this section,  let us discuss
 calculation of  the physically admissible macroscopic states $\{n,\vec{u},\theta  ={\zeta}^{-1}:$
$n>0,|\vec{u}|<{c}, \theta>0\}$ from
a given nonnegative distribution   $f(x,\vec{p},t)$,
which is not identically zero.

\begin{thm}
	\label{thm:admissible}
	For the nonnegative distribution  $f(x,\vec{p},t)$,  which is not identically zero, the density current $N^{\alpha}$ and  energy-momentum tensor $T^{\alpha\beta}$ calculated by \eqref{eq:NTab}
satisfy the properties:
\begin{itemize}
  \item[(i)] $T^{\alpha\beta}$ is positive definite, and the matrix-pair $(T^{\alpha\beta},g^{\alpha\beta})$ has an unique positive generalized eigenvalue $\varepsilon$ larger than
       $nm c^2$ and  corresponding generalized eigenvector $U_{\alpha}$ satisfying $U_{0}=\sqrt{U_{1}^2+U_{2}^2+U_{3}^{2}+c^2}$, i.e., $T^{\alpha\beta}U_{\beta}=\varepsilon g^{\alpha\beta}U_{\beta}$.
    \item[(ii)] The macroscopic velocity vector
      $\vec{u}:=-c(U_{0}^{-1}U_{1},U_{0}^{-1}U_{2},U_{0}^{-1}U_{3})^{T}$ is bounded by the speed of light, that is,
       $|\vec{u}|<c$.
  \item[(ii)] The number density $n:=c^{-2}U_{\alpha}N^{\alpha}$ is positive.
  \item[(iii)] The equation
  \begin{equation}
  \label{eq:admT}
  G(\theta^{-1})-\theta=n^{-1}m^{-1}c^{-2}\varepsilon,
  \end{equation}
   has an unique positive solution for $\theta \in (0,+\infty)$.
\end{itemize}
\end{thm}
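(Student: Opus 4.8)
The plan is to treat the four items in order, everywhere exploiting that the components of $N^{\alpha}$ and $T^{\alpha\beta}$ are momentum--space integrals of $p$--polynomials against the fixed nonnegative weight $f$.

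For the positive definiteness in (i) I would contract $T^{\alpha\beta}$ with an arbitrary nonzero real covector $\xi_{\alpha}$ and use
\[
\xi_{\alpha}\xi_{\beta}T^{\alpha\beta}=c\int_{\mathbb{R}^{3}}(\xi_{\alpha}p^{\alpha})^{2}f\frac{d^{3}\vec{p}}{p^{0}}\geq 0;
\]
since $f\geq 0$ is not identically zero and, because $p^{0}=\sqrt{|\vec{p}|^{2}+m^{2}c^{2}}$, the set $\{\vec{p}:\xi_{\alpha}p^{\alpha}=0\}$ is a measure--zero quadric whenever $\xi\neq 0$, the integral is strictly positive. For the generalized eigenvalue claim, $g^{\alpha\beta}$ is symmetric and nonsingular with inertia $(1,3)$, so I would reduce the pencil $(T^{\alpha\beta},g^{\alpha\beta})$ by a Cholesky congruence $T=LL^{\mathrm{T}}$: the generalized eigenvalues are the reciprocals of the eigenvalues of the symmetric matrix $L^{-1}gL^{-\mathrm{T}}$, which by Sylvester's law of inertia is again congruent to $g$ and hence has signature $(1,3)$. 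Thus the pencil has exactly one positive generalized eigenvalue $\varepsilon$; moreover $U_{\alpha}T^{\alpha\beta}U_{\beta}=\varepsilon\,U_{\alpha}U^{\alpha}$ together with $U_{\alpha}T^{\alpha\beta}U_{\beta}>0$ forces $U_{\alpha}U^{\alpha}>0$, so the associated eigenvector is timelike, and normalizing $U^{\alpha}U_{\alpha}=c^{2}$ on the future--pointing branch yields $U_{0}=\sqrt{U_{1}^{2}+U_{2}^{2}+U_{3}^{2}+c^{2}}$.

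The remaining inequalities in (i)--(ii) rest on the scalar bound $E=U_{\alpha}p^{\alpha}\geq mc^{2}$, which holds because in the rest frame of $U^{\alpha}$ one has $E=cp^{0}$ with $p^{0}=\sqrt{|\vec{p}|^{2}+m^{2}c^{2}}\geq mc$, equality only at $\vec{p}=0$. Using $\varepsilon=c^{-1}\int E^{2}f\,d^{3}\vec{p}/p^{0}$ and $n=c^{-1}\int Ef\,d^{3}\vec{p}/p^{0}$ I then obtain
\[
\varepsilon-nmc^{2}=c^{-1}\int_{\mathbb{R}^{3}}E(E-mc^{2})f\frac{d^{3}\vec{p}}{p^{0}}>0,
\]
and $n>0$ by the same nonnegativity argument, while $|\vec{u}|^{2}=c^{2}U_{0}^{-2}(U_{1}^{2}+U_{2}^{2}+U_{3}^{2})<c^{2}$ is immediate from $U_{0}^{2}=U_{1}^{2}+U_{2}^{2}+U_{3}^{2}+c^{2}$.

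The substantive part is (iii). Writing $\Phi(\theta):=G(\theta^{-1})-\theta$, I must show that $\Phi$ is a strictly increasing continuous bijection of $(0,\infty)$ onto $(1,\infty)$, so that the bound $n^{-1}m^{-1}c^{-2}\varepsilon>1$ just proved in (i) furnishes a unique root of \eqref{eq:admT}. The endpoint limits follow from the behavior of $K_{\nu}$: as $\theta\to 0^{+}$ (that is $\zeta\to\infty$) one has $G(\zeta)\to 1$, so $\Phi\to 1$, whereas as $\theta\to\infty$ ($\zeta\to 0$) one has $G(\zeta)\sim 4\zeta^{-1}$, so $\Phi\sim 3\theta\to\infty$. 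For the monotonicity the transparent route is to read $\Phi(\theta)=\varepsilon_{0}/(n_{0}mc^{2})$ as the equilibrium mean energy per particle: in the rest frame the Maxwell--J\"uttner weight is proportional to $e^{-\beta p^{0}}$ with $\beta=\zeta/(mc)$, whence $\Phi=-(mc)^{-1}\,d\ln Z/d\beta$ for the partition function $Z=\int_{\mathbb{R}^{3}}e^{-\beta p^{0}}d^{3}\vec{p}$, and differentiating once more gives $d\langle p^{0}\rangle/d\beta=-\mathrm{Var}(p^{0})<0$ (strictly, since $p^{0}$ is non-constant), i.e. $\Phi$ increases strictly in $\theta$. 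I expect this monotonicity (equivalently, positivity of the specific heat) to be the main obstacle: proving it directly from the identity $G'(\zeta)=G^{2}-5\zeta^{-1}G-1$ would instead require a sharp upper bound on $\zeta G=\zeta K_{3}/K_{2}$ that is essentially equivalent to the monotonicity itself, so the variance identity is the cleaner way to close the argument.
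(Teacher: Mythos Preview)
Your proof is correct and in parts (i)--(ii) essentially parallels the paper (the integral identity for $\xi_\alpha T^{\alpha\beta}\xi_\beta$, the inequality $E>mc^2$, and the computation $\varepsilon-nmc^2=c^{-1}\int E(E-mc^2)f\,d^3\vec p/p^0$). Your Sylvester--inertia argument for the existence of a unique positive generalized eigenvalue is actually more self-contained than the paper, which at that step simply appeals to the Landau--Lifshitz definition \eqref{eq:landau}.

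The genuine divergence is in (iii). The paper establishes strict monotonicity of $\Phi(\theta)=G(\theta^{-1})-\theta$ by first deriving the identity $\Phi'(\theta)=-\theta^{-2}\big(G^2-5\theta G+\theta^2-1\big)$ and then sandwiching $G(\theta^{-1})$ between explicit rational bounds $\tfrac{5}{2}\theta+1<G<\tfrac{2(6\theta^2+4\theta+1)}{3\theta+2}$, each obtained from the positivity of a concrete equilibrium moment such as $\int(E-mc^2)^2 f^{(0)}\,d^3\vec p/p^0$. You instead read $\Phi$ as the equilibrium mean energy per particle and invoke the cumulant identity $d\langle p^0\rangle/d\beta=-\mathrm{Var}(p^0)<0$; this is cleaner and conceptually identifies the monotonicity with positivity of the specific heat, avoiding any Bessel-function manipulation. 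The trade-off is that the paper's route yields, as a by-product, the explicit two-sided bound on $G$, which is of independent use elsewhere; your variance argument gives the monotonicity in one line but no quantitative control on $G$.
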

\begin{proof}
{\em (i)} For  any nonzero vector $\vec{X}=(x_{0},x_{1},x_{2},x_{3})^{T}\in \mathbb R^4$ and the given nonnegative distribution $f(x,\vec{p},t)$, which is not identically zero,
		using \eqref{eq:NTab} gives
\begin{align*}
    &\vec{X}^{T}\left(T^{\alpha\beta}\right)\vec{X}=c\vec{X}^{T}
    \left(\int_{\mathbb{R}^{3}}p^{\alpha}p^{\beta}f\frac{d^3\vec{p}}{p^{0}}\right)\vec{X}
    =c\int_{\mathbb{R}^{3}}x_{\alpha}p^{\alpha}p^{\beta}x_{\beta}f\frac{d^3\vec{p}}{p^{0}}
    =c\int_{\mathbb{R}^{3}}(x_{\alpha}p^{\alpha})^2f\frac{d^3\vec{p}}{p^{0}}>0.
\end{align*}
It means  that $T^{\alpha\beta}$ is positive definite.

Thanks of  \eqref{eq:landau}, 
the matrix-pair $(T^{\alpha\beta},g^{\alpha\beta})$ has an unique positive generalized eigenvalue $\varepsilon$, which satisfies
$$
U_{\alpha}T^{\alpha\beta}U_{\beta}=\varepsilon U_{\alpha}g^{\alpha\beta}U_{\beta}, \ \
U_{\alpha}\neq 0.
$$
Because $T^{\alpha\beta}$ is positive definite, the left hand side is larger than zero
and thus one has  $U_{0}^2>U_{1}^2+U_{2}^2+U_{3}^{2}$ and   $U_0=\sqrt{U^2_1+U^2_2+U^2_3+c^2}$  via multiplying $(U_{\alpha})$ by a scaling constant
$c(U^2_0-U^2_1-U^2_2-U^2_3)^{-1/2}$. As a result, the macroscopic velocity vector $\vec{u}$
can be calculated by
$\vec{u}=-c(U^{-1}_0U_1,U^{-1}_0U_2,U^{-1}_0U_3)^{T}$ and satisfies
\begin{equation}
  |\vec{u}| = cU^{-1}_0\sqrt{U^2_1+U^2_2+U^2_3}<c.
\end{equation}

{\em (ii)}
Using the Cauchy-Schwarz inequality gives
\[
E-mc^2=\left(\sqrt{\sum_{i=1}^3 U_{i}^2+c^2}\sqrt{\sum_{i=1}^3 p_{i}^2+m^2c^2}-\sum_{i=1}^3 U_{i}p_{i}\right)-mc^2>0,
\]
which implies
\[
n=c^{-2}U_{\alpha}N^{\alpha}=c^{-1}\int_{\mathbb{R}^{3}}Ef\frac{d^{3}\vec{p}}{p^{0}}>0.
\]

{\em (iii)} Because $E>mc^2$ in (ii), one has
\begin{align*}
\varepsilon-nm c^2&=c^{-2}U_{\alpha}T^{\alpha\beta}U_{\beta}-mU_{\alpha}N^{\alpha}
=c^{-1}\left(\int_{\mathbb{R}^{3}}E^2f\frac{d^{3}\vec{p}}{p^{0}}-mc^2\int_{\mathbb{R}^{3}}Ef\frac{d^{3}\vec{p}}{p^{0}}\right)\\
&=c^{-1}\int_{\mathbb{R}^{3}}E(E-mc^2)f\frac{d^{3}\vec{p}}{p^{0}}>0.
\end{align*}

On the other hand,
it holds 
	\[
	\lim_{\theta\rightarrow0}\left(G(\theta^{-1})-\theta\right)=1,\
	\lim_{\theta\rightarrow +\infty}\left(G(\theta^{-1})-\theta\right)=\lim_{\theta\rightarrow +\infty}3\theta= +\infty,
	\]
	and
{\begin{align*}
	\frac{\partial (G(\theta^{-1})-\theta)}{\partial \theta}
	=&-\theta^{-2}
	\left(G(\theta^{-1})^2-5G(\theta^{-1})\theta+\theta^{2}-1\right)=:\tilde{\psi}(G(\theta^{-1}),\theta).
	\end{align*}
Because
\begin{align*}
0&<mc\int_{\mathbb{R}^{3}}
\frac{E-mc^2}{E+mc^2}
f^{(0)}\frac{d^{3}\vec{p}}{p^{0}}=-{n}\theta \left((3\theta+2)G(\theta^{-1})-2(6\theta^2+4\theta+1)\right),\\
0&<c^{-1}\int_{\mathbb{R}^{3}}(E-mc^2)^2f^{(0)}\frac{d^{3}\vec{p}}{p^{0}}={nm} c^{2}(2G(\theta^{-1}) - 5\theta-2),
\end{align*}
one gets
\[
\frac{5}{2}\theta+1<  G(\theta^{-1})<\frac{2(6\theta^2+4\theta+1)}{3\theta+2}.
\]
Hence  one has
\[
\tilde{\psi}(G(\theta^{-1}),\theta)>
\tilde{\psi}\left(\frac{2(6\theta^2+4\theta+1)}{3\theta+2},\theta\right)>3(3\theta+2)^{-2}(9\theta^2+12\theta+1)>0,
\]
i.e.
\[
\frac{\partial (G(\theta^{-1})-\theta)}{\partial \theta}>0,
\]}
which implies that 	$G(\theta^{-1})-\theta$  is a strictly monotonic function  of $\theta$ in the interval $(0,+\infty)$.

%
Thus   \eqref{eq:admT} has an unique solution in the interval $(0,+\infty)$.
	The proof is completed.
	\qed\end{proof}

Furthermore, the following conclusion holds.
	\begin{thm}
		\label{lem:admissible1}
		Under the assumptions of Theorem \ref{thm:admissible}, the bulk viscous pressure $\Pi$
		satisfies
		\[
		\Pi >  -nm {c^{2}}\theta.
		\]
	\end{thm}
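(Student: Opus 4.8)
The plan is to reduce the claimed bound to the positivity of a single integral and then invoke the pointwise estimate $E > mc^2$ already established in Theorem~\ref{thm:admissible}.

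First I would recall that, by the definition in \eqref{eq:variable}, the total effective pressure is
\[
P_0 + \Pi = \frac{1}{3c}\int_{\mathbb{R}^3}(E^2 - m^2c^4)f\frac{d^3\vec{p}}{p^0},
\]
where $E = U_\alpha p^\alpha$ is built from the generalized eigenvector $U_\alpha$ of the matrix-pair $(T^{\alpha\beta}, g^{\alpha\beta})$ produced in Theorem~\ref{thm:admissible}. Since $\theta = \zeta^{-1}$ and, by \eqref{eq:variable0}, the equilibrium pressure satisfies $P_0 = n k_B T = nmc^2\theta$, the assertion $\Pi > -nmc^2\theta$ is exactly $P_0 + \Pi > 0$. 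Thus the whole statement collapses to showing that the displayed integral is strictly positive.

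Second, I would use part (ii) of Theorem~\ref{thm:admissible}, whose proof shows via the Cauchy--Schwarz inequality that $E = U_\alpha p^\alpha > mc^2$ pointwise for every admissible momentum $\vec{p}$. Consequently $E^2 - m^2c^4 > 0$ everywhere on $\mathbb{R}^3$, so the integrand $(E^2 - m^2c^4)f$ is nonnegative. Because $f$ is nonnegative and not identically zero by hypothesis, this integrand is strictly positive on a set of positive measure, and hence the integral itself is strictly positive. This immediately yields $P_0 + \Pi > 0$, i.e.\ $\Pi > -nmc^2\theta$.

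There is essentially no analytical obstacle here; the result is a direct corollary of the machinery already assembled for Theorem~\ref{thm:admissible}. The only points requiring care are bookkeeping. One must check that the $\theta$ appearing in the bound is the same temperature obtained as the unique root of \eqref{eq:admT}, so that the identity $P_0 = nmc^2\theta$ is legitimate; and one must use the normalized eigenvector $U_\alpha$ satisfying $U_0 = \sqrt{U_1^2 + U_2^2 + U_3^2 + c^2}$ when forming $E$, since it is precisely this normalization that delivers the sharp pointwise bound $E > mc^2$ (rather than merely $E > 0$) needed to close the argument.
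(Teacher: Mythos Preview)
Your proposal is correct and follows essentially the same route as the paper: rewrite the claim as $P_0+\Pi>0$, express this as $\frac{1}{3c}\int_{\mathbb{R}^3}(E^2-m^2c^4)f\,d^3\vec{p}/p^0$, and conclude strict positivity from the pointwise bound $E>mc^2$ (established in Theorem~\ref{thm:admissible}) together with $f\geq 0$ not identically zero. Your write-up is in fact slightly more careful than the paper's in making the strict positivity explicit via the set-of-positive-measure remark.
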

	\begin{proof}
Using Theorem \ref{thm:admissible}
yields that $\{n,\vec{u},\theta\}$ satisfy
\begin{equation}
\label{eq:positivity}
		n>0, \quad |\vec{u}|<{c}, \quad \theta>0.
\end{equation}		
Combining them with the last equations in \eqref{eq:variable} and \eqref{eq:variable0} gives
		\[ {\Pi=-\frac{1}{3}\int_{\mathbb{R}^{3}}\Delta_{\alpha\beta}p^{\alpha}p^{\beta}f\frac{d^{3}\vec{p}}{p^{0}}-n mc^{2} \theta=-\frac{1}{3c}\int_{\mathbb{R}^{3}}(E^2-m^{2}c^{4})f\frac{d^{3}\vec{p}}{p^{0}}-nm c^{2}\theta>-nm c^{2}\theta}.
		\]
 	The proof is completed.
		\qed\end{proof}


\begin{remark}
Theorem \ref{thm:admissible} provides a recovery procedure of the admissible primitive variables $\rho, \vec{u}$, and $\theta$  from the nonnegative distribution $f(\vec{x},\vec{p},t)$ or the given density current $N^{\alpha}$  and  energy-momentum tensor $T^{\alpha\beta}$ satisfying
\begin{equation}
\label{eq:consTN}
 U_{\alpha}N^{\alpha}>0, \mbox{and } T^{\alpha\beta}\ \mbox{is positive definite}.
\end{equation}
	    It is  useful in  the derivation of the moment system as well as the numerical scheme.
\end{remark}


\begin{remark}
If  setting
$$
\vec{x}=L\hat{\vec{x}}, ~ \vec{p}=c\hat{\vec{p}}, ~ p^{0}=c\hat{p^{0}},~ t=\frac{L}{c}\hat{t},~
g=c\hat{g},~ f=\frac{n_{0}}{c^3}\hat{f},
$$
where  $L$,
 $n_{0}$, and $\theta_{0}=m c^2/k_{B}$ are the macroscopic characteristic length, the reference
 number density and  temperature,
 respectively, then  the relativistic Boltzmann equation \eqref{eq:Boltz} with the Anderson-Witting model \eqref{eq:colAW} can be   non-dimensionalized as follows
\begin{equation}\label{EQ:0000000001}
\hat{p}^{\alpha}\frac{\partial \hat{f}}{\partial \hat{x}^{\alpha}}=
\frac{\hat{n}}{K_{n}}\hat{U}_{\alpha}\hat{p}^{\alpha}\left(\hat{f}^{(0)}-
\hat{f}\right),
\end{equation}
where  $K_{n}=\frac{\lambda}{L}=\frac{\tau_{0}c}{L}=\frac{1}{n_{0}L\pi d^2}$ denotes
 the Knudsen number. 
If considering $\tilde\tau:=\frac{K_{n}}{\hat{n}}$   as a new ``relaxation time'',
then in  \eqref{EQ:0000000001} the collision term
is the same as that of the non-relativistic Bhatnagar-Gross-Krook  model, and
 the notations $\tilde\tau$, $\hat{\vec{x}}$, $\hat t$, $\hat f$, $\hat{\vec{p}}$, $\hat p^{0}$, $\hat n$
can still be  replaced with $\tau$, $\vec{x}$, $t$, $f$, $\vec{p}$, $p^{0}$, $n$  respectively.
\end{remark}

\section{Families of orthogonal polynomials}
\label{sec:orth}
This section  introduces (infinite) families of Grad type orthogonal polynomials dependent on a parameter $\zeta$,
 and studies their properties.
 The polynomials are the same as those in \cite{GRADP:1974}, but different from those in \cite{Kuang:2017}.
 Their properties are not  discussed in the literature  but are crucial for
 deriving  globally hyperbolic moment model of arbitrary order for the three-dimensional special
relativistic Boltzmann equation.

If considering
\begin{equation}
\label{eq:omegal}
  \omega^{(\ell)}(x;\zeta)=\frac{\zeta(x^2-1)^{\ell+\frac{1}{2}}}{(2\ell+1)K_{2}(\zeta)}\exp\left(-\zeta x\right),\ \ell\in\mathbb{N},
\end{equation}
as  the weight functions in the interval $[1,+\infty)$, where  $\zeta\in\mathbb{R}^{+}$ denotes
 a parameter, and $\mathbb{N}$ is the set of the non-negative integers,
then  the inner product with respect to $\omega^{(\ell)}(x;\zeta)$ may be introduced as follows
\[
\left(f,g\right)_{\omega^{(\ell)}}:=\int_{1}^{+\infty}f(x)g(x)\omega^{(\ell)}(x;\zeta)dx,
\ \ f,g\in L^{2}_{\omega^{(\ell)}}[1,+\infty),\ \ell\in\mathbb{N},
\]
where  $L^{2}_{\omega^{(\ell)}}[1,+\infty):=\left\{f\big|\int_{1}^{+\infty}f(x)^2\omega^{(\ell)}(x;\zeta)dx<+\infty\right\}$.
It is worth noting that
the choice of the weight function $\omega^{(\ell)}(x;\zeta)$
is dependent on the equilibrium distribution {$f^{(0)}(x,p, t)$} in
\eqref{eq:equm1}.

Let $\{P_{k}^{(\ell)}(x;\zeta)\}$, $\ell\in\mathbb{N}$, be infinite families of standard orthogonal polynomials
with respect to the weight function $\omega^{(\ell)}(x;\zeta)$ in the interval  $[1,+\infty)$, i.e.
\begin{equation}
  \label{eq:P01orth}
     \left(P_{i}^{(\ell)},P_{k}^{(\ell)}\right)_{\omega^{(\ell)}}=\delta_{i,k},\quad
  \ell\in\mathbb{N},
\end{equation}
where the degree of $P_{k}^{(\ell)}(x;\zeta)$ is equal to $k$,
$\delta_{i,k}$ denotes the Kronecker delta function,
which is equal to 1 if $i=k$, and 0 otherwise.
Obviously,   $\{P_{k}^{(\ell)}(x;\zeta)\}$ satisfies
\begin{equation}
\label{eq:P01orth-2}
\left(P_{k}^{(\ell)},x^{i}\right)_{\omega^{(\ell)}}=0,\ i\leq k-1,
\end{equation}
and
\begin{equation}
\label{eq:P01orth-3}
Q(x;\zeta)=\sum_{i=0}^{k}\left(Q(x;\zeta),P_{k}^{(\ell)}\right)_{\omega^{(\ell)}}P_{k}^{(\ell)}(x;\zeta),
\end{equation}
for any   polynomial $Q(x;\zeta)$ of degree $\leq k$ in $L^{2}_{\omega^{(\ell)}}[1,+\infty)$.

The orthogonal polynomials $\{P_{k}^{(\ell)}(x;\zeta)\}$
may be obtained by using the Gram-Schmidt  process.
For example, several orthogonal polynomials of lower degree  
are given as follows
\begin{align}\label{EQ-3.3aaaaaaa}
	\begin{aligned}
     P_{0}^{(0)}(x;\zeta)&=\frac{1}{\sqrt{G(\zeta)-4\zeta^{-1}}},\\
     P_{1}^{(0)}(x;\zeta)&=\frac{\sqrt{G(\zeta)-4\zeta^{-1}}}{\sqrt{G(\zeta)^2-5\zeta^{-1}G(\zeta)+4\zeta^{-2}-1}}\left(x-\frac{1}{G(\zeta)-4\zeta^{-1}}\right),\\
     P_{2}^{(0)}(x;\zeta)&=\frac{\zeta\sqrt{G(\zeta)^2-5\zeta^{-1}G(\zeta)+4\zeta^{-2}-1}}{\sqrt{3}\sqrt{2G(\zeta)^3-13\zeta ^{-1}G(\zeta)^2-2G(\zeta)+20\zeta^{-2}G(\zeta)+3\zeta^{-1}}}\\
     &\cdot \left(x^2-3\frac{G(\zeta)^2-4\zeta^{-1}G(\zeta)-1}{\zeta\left(G(\zeta)^2-5\zeta^{-1}G(\zeta)+4\zeta^{-2}-1\right)}x
     -\frac{G(\zeta)^2-5\zeta^{-1}G(\zeta)+\zeta^{-2}-1}{G(\zeta)^2-5\zeta^{-1}G(\zeta)+4\zeta^{-2}-1}\right),\\
     P_{0}^{(1)}(x;\zeta)&=\sqrt{\zeta},\\
     P_{1}^{(1)}(x;\zeta)&=\frac{\sqrt{\zeta}}{\sqrt{-G(\zeta)^2+5\zeta^{-1}G(\zeta)+1}}\left(x-G(\zeta)\right),\\
     P_{0}^{(2)}(x;\zeta)&=\frac{\zeta}{\sqrt{3G(\zeta)}},
\end{aligned}\end{align}
 and plotted  in Fig. \ref{fig:polyp} with respect to $x$ and $\zeta$.
Obviously,
the coefficients in those  orthogonal polynomials are so irregular
that it is quite complicate to  study
 the properties of $\{P_{k}^{(\ell)}(x;\zeta)\}$.
%
Let $c_{k}^{(\ell)}$  be the leading coefficient of
$P_{k}^{(\ell)}(x;\zeta)$, and without loss of generality,  assume $c_{k}^{(\ell)}>0$, $\ell\in\mathbb{N}$.
%
Because the polynomial   $P_{k}^{(\ell)}(x;\zeta)$ has   exactly $n$ real simple zeros
in the interval $(1,+\infty)$, denoted by $\{x_{i,k}^{(\ell)}\}_{i=1}^{k}$ in an increasing order,
 the polynomial $P_{k}^{(\ell)}(x;\zeta)$ can be rewritten as follows
\begin{equation}
     \label{eq:Preprezero}
     P_{k}^{(\ell)}(x;\zeta)=c_{k}^{(\ell)}\prod_{i=1}^{k}(x-x_{i,k}^{(\ell)}),\ \ell\in \mathbb N.
\end{equation}

\begin{figure}
  \centering
  \subfigure[ $P_{k}^{(\ell)}(x,\zeta)$ at  $\zeta=1$.]
  {
  \includegraphics[width=0.4\textwidth]{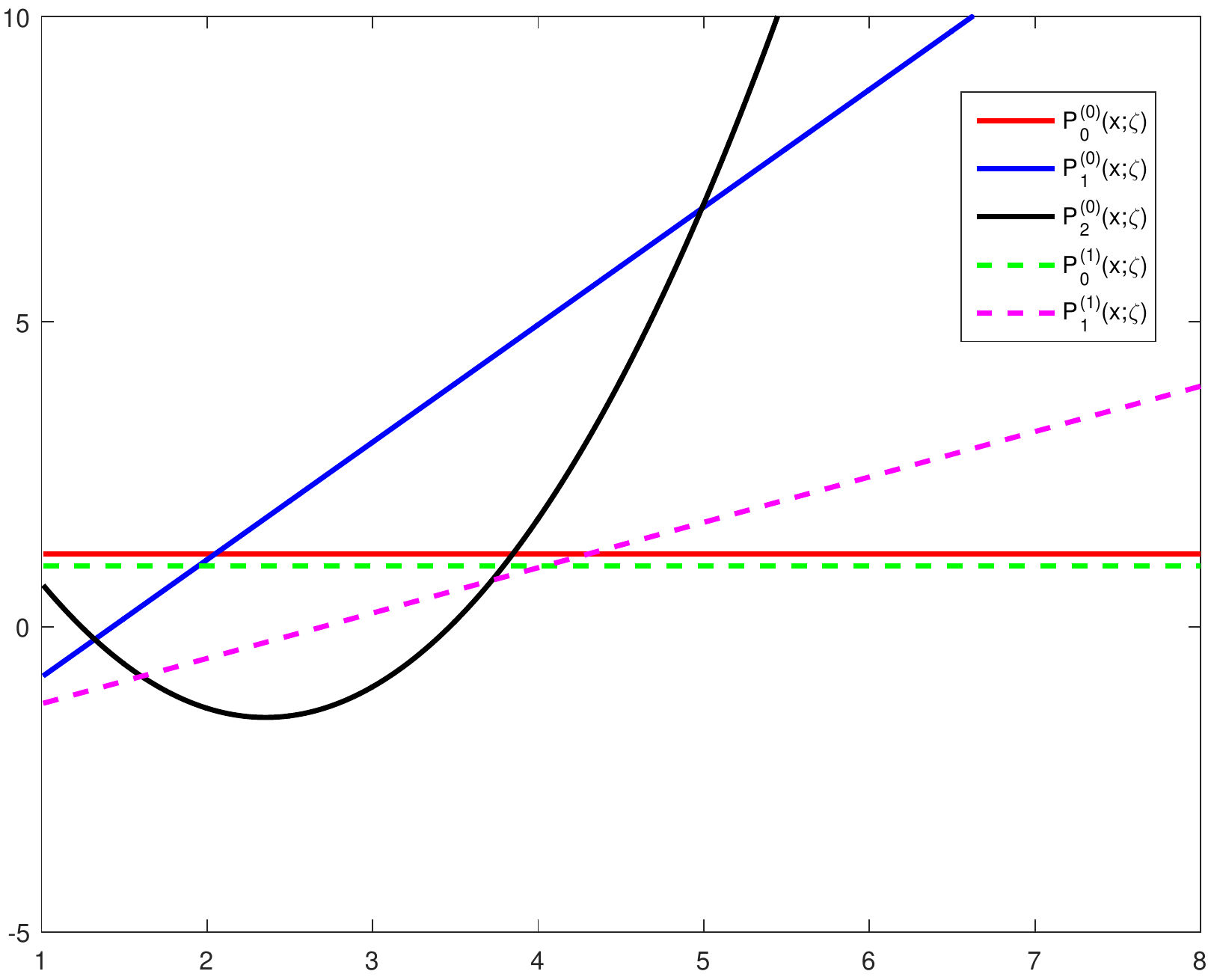}
  }
  \subfigure[ $P_{k}^{(\ell)}(x,\zeta)$ at $x=3$.]
  {
  \includegraphics[width=0.4\textwidth]{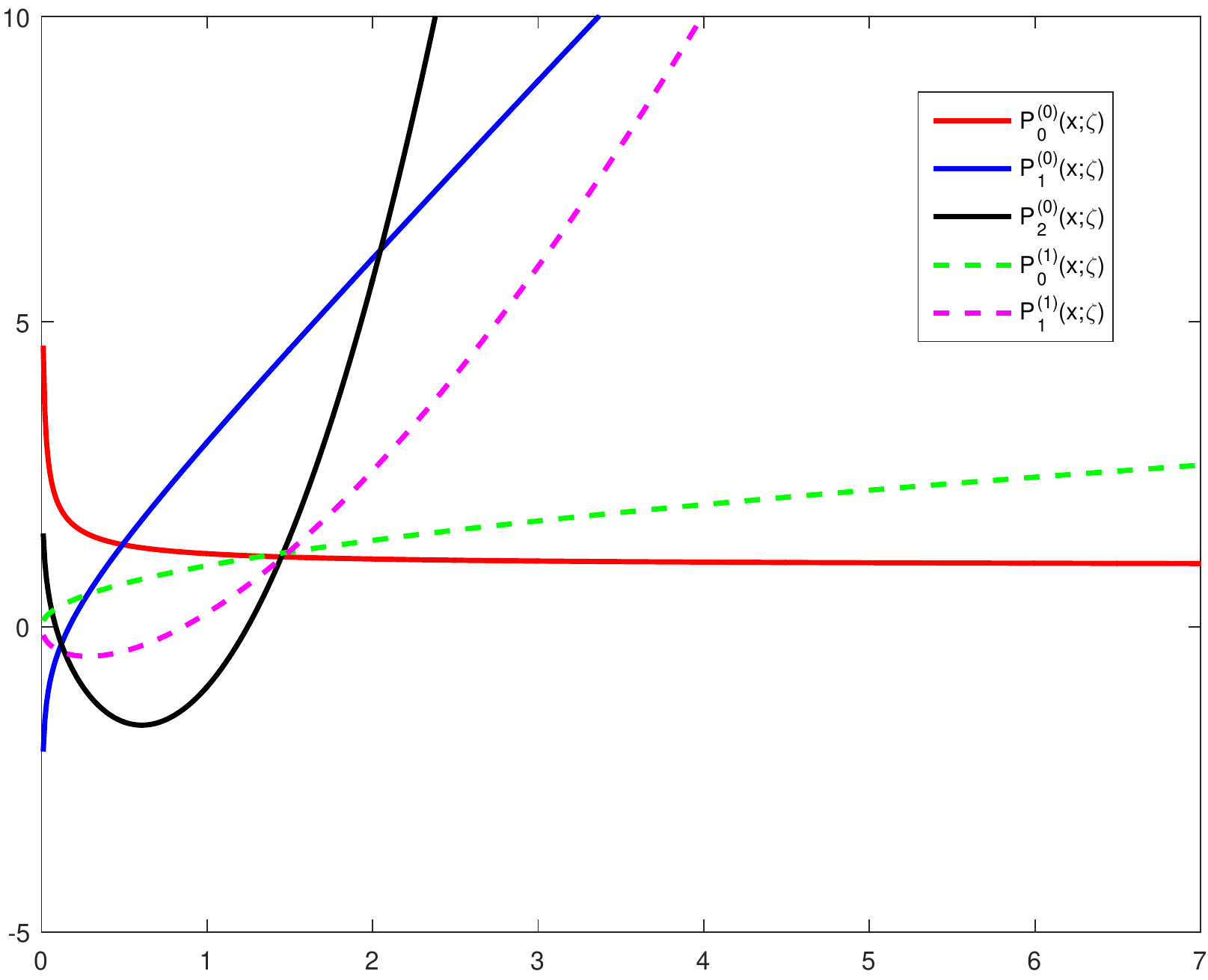}
   }
  \caption{\small The polynomials $P_{k}^{(\ell)}(x,\zeta)$ given in \eqref{EQ-3.3aaaaaaa}.}
\label{fig:polyp}
\end{figure}

 In the following, we will derive the  recurrence relations of $\{P_{n}^{(\ell)}(x;\zeta)\}$,
 calculate their derivatives with respect to $x$ and $\zeta$,  respectively,
 and study the properties of zeros and coefficient matrices in the  recurrence relations.


\subsection{Recurrence relations}
\label{subsec:recurr}
This section presents  the recurrence relations for the orthogonal polynomials
 $\{P_{k}^{(\ell)}(x;\zeta)\}$, $\ell\in\mathbb{N}$,
 the recurrence relations between  $\{P_{k}^{(\ell)}(x;\zeta)\}$ and  $\{P_{k}^{(\ell-1)}(x;\zeta)\}$, $\ell\in\mathbb{N}^{+}=\mathbb N/\{0\}$,
 and the specific forms of the coefficients in those recurrence relations.

Using  the three-term recurrence relation
and the existence theorem of zeros of general orthogonal polynomials in Theorems 3.1 and 3.2 of \cite{SP:2011}
gives the following conclusion.

\begin{thm}
\label{thm:rec}
For $\ell\in\mathbb{N}$,
a three-term recurrence relation for
the orthogonal polynomials $\{P_{k}^{(\ell)}(x;\zeta)\}$ can be given by
\begin{equation}
  xP_{k}^{(\ell)}=a_{k-1}^{(\ell)}P_{k-1}^{(\ell)}+b_{k}^{(\ell)}P_{k}^{(\ell)}+a_{k}^{(\ell)}P_{k+1}^{(\ell)}, \label{eq:recP0P1}
\end{equation}
or in the matrix-vector form
\begin{equation}
\label{eq:recP0P1mat}
x\vec{P}_{k}^{(\ell)}=\vec{J}_{k}^{(\ell)}\vec{P}_{k}^{(\ell)}+a_{k}^{(\ell)}P_{k+1}^{(\ell)}\vec{e}_{k+1},
\  \vec{P}_{k}^{(\ell)}:=(P_{0}^{(\ell)},\cdots,P_{k}^{(\ell)})^{T},
\end{equation}
where  both coefficients
\begin{equation}
\label{eq:abn}
a_{k}^{(\ell)}:=\left(xP_{k}^{(\ell)},P_{k+1}^{(\ell)}\right)_{\omega^{(\ell)}}=\frac{c_{k}^{(\ell)}}{c_{k+1}^{(\ell)}}, \quad b_{k}^{(\ell)}:=\left(xP_{k}^{(\ell)},P_{k}^{(\ell)}\right)_{\omega^{(\ell)}}=\sum_{i=1}^{k+1}x_{i,k+1}^{(\ell)}-\sum_{i=1}^{k}x_{i,k}^{(\ell)},
\end{equation}
are positive,
 $\vec{e}_{k+1}$ is the last column of the identity matrix of order $(k+1)$,
 and
\[
     \vec{J}_{k}^{(\ell)}:=\begin{pmatrix}
     b_{0}^{(\ell)} &  a_{0}^{(\ell)}  & 0        &            &   & \\
     a_{0}^{(\ell)}& b_{1}^{(\ell)} & a_{1}^{(\ell)}&             &  &   \\
 &          \ddots    &     \ddots     &  \ddots  &      &     \\
                         &    &    &  a_{k-2}^{(\ell)}   & b_{k-1}^{(\ell)} &   a_{k-1}^{(\ell)}   \\
                            &           &    &    0 & a_{k-1}^{(\ell)}   & b_{k}^{(\ell)} \\
      \end{pmatrix}\in {\mathbb R}^{(k+1)\times(k+1)},
\]
which is a symmetric positive definite  tridiagonal matrix
with the spectral radius  larger than $1$.
\end{thm}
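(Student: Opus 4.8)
The plan is to treat this as a standard result on orthogonal polynomials, invoking the general three-term recurrence and zero-existence theorems cited from \cite{SP:2011}, while paying special attention to the consequences of the support being $[1,+\infty)$. First I would establish the recurrence itself. Since $P_{k}^{(\ell)}$ has exact degree $k$, the degree-$(k+1)$ polynomial $xP_{k}^{(\ell)}$ admits the expansion $xP_{k}^{(\ell)}=\sum_{i=0}^{k+1}\left(xP_{k}^{(\ell)},P_{i}^{(\ell)}\right)_{\omega^{(\ell)}}P_{i}^{(\ell)}$ by the completeness relation \eqref{eq:P01orth-3}. For $i\leq k-2$ the coefficient $\left(xP_{k}^{(\ell)},P_{i}^{(\ell)}\right)_{\omega^{(\ell)}}=\left(P_{k}^{(\ell)},xP_{i}^{(\ell)}\right)_{\omega^{(\ell)}}$ vanishes because $xP_{i}^{(\ell)}$ has degree $\leq k-1$ and $P_{k}^{(\ell)}$ is orthogonal to all lower-degree polynomials by \eqref{eq:P01orth-2}. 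Only three terms survive, which yields \eqref{eq:recP0P1} with $a_{k-1}^{(\ell)},b_{k}^{(\ell)},a_{k}^{(\ell)}$ given by the stated inner products; the symmetry $\left(xP_{k}^{(\ell)},P_{k-1}^{(\ell)}\right)_{\omega^{(\ell)}}=\left(xP_{k-1}^{(\ell)},P_{k}^{(\ell)}\right)_{\omega^{(\ell)}}$ forces the same coefficient to appear in neighbouring recurrences, giving the symmetric tridiagonal Jacobi matrix $\vec{J}_{k}^{(\ell)}$.

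Next I would pin down the two explicit forms of the coefficients. The identity $a_{k}^{(\ell)}=c_{k}^{(\ell)}/c_{k+1}^{(\ell)}$ follows from matching the leading $x^{k+1}$ coefficients in \eqref{eq:recP0P1}, since only the $a_{k}^{(\ell)}P_{k+1}^{(\ell)}$ term contributes at that degree. The formula $b_{k}^{(\ell)}=\sum_{i=1}^{k+1}x_{i,k+1}^{(\ell)}-\sum_{i=1}^{k}x_{i,k}^{(\ell)}$ comes from matching the $x^{k}$ coefficients, writing the sub-leading coefficient of each polynomial via Vieta's formula applied to the factorization \eqref{eq:Preprezero} and using $a_{k}^{(\ell)}c_{k+1}^{(\ell)}=c_{k}^{(\ell)}$ to simplify.

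For positivity, since $c_{k}^{(\ell)},c_{k+1}^{(\ell)}>0$ by assumption, $a_{k}^{(\ell)}=c_{k}^{(\ell)}/c_{k+1}^{(\ell)}>0$ at once. For $b_{k}^{(\ell)}$ I would bypass the zero-sum expression and use the integral form directly, namely $b_{k}^{(\ell)}=\int_{1}^{+\infty}x\,(P_{k}^{(\ell)})^{2}\,\omega^{(\ell)}(x;\zeta)\,dx$; since $x>1$ on $(1,+\infty)$ and the normalization \eqref{eq:P01orth} gives $\int_{1}^{+\infty}(P_{k}^{(\ell)})^{2}\omega^{(\ell)}dx=1$, this integral strictly exceeds $1$, so in fact $b_{k}^{(\ell)}>1>0$.

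Finally, for the spectral properties of $\vec{J}_{k}^{(\ell)}$, symmetry and the tridiagonal structure are immediate from its definition. For positive definiteness and the spectral-radius bound I would identify the eigenvalues of $\vec{J}_{k}^{(\ell)}$ with the zeros of $P_{k+1}^{(\ell)}$: evaluating the matrix-vector form \eqref{eq:recP0P1mat} at any zero $x_{j,k+1}^{(\ell)}$ of $P_{k+1}^{(\ell)}$ annihilates the boundary term and yields $\vec{J}_{k}^{(\ell)}\vec{P}_{k}^{(\ell)}(x_{j,k+1}^{(\ell)})=x_{j,k+1}^{(\ell)}\vec{P}_{k}^{(\ell)}(x_{j,k+1}^{(\ell)})$, so each of the $k+1$ simple zeros is an eigenvalue with eigenvector $\vec{P}_{k}^{(\ell)}(x_{j,k+1}^{(\ell)})$. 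Because the zero-existence theorem of \cite{SP:2011} places all zeros in the support interval $(1,+\infty)$, every eigenvalue exceeds $1$; hence $\vec{J}_{k}^{(\ell)}$ is positive definite and its spectral radius equals the largest zero $x_{k+1,k+1}^{(\ell)}>1$. The main obstacle is precisely this spectral-radius bound: it is not a generic orthogonal-polynomial fact but depends essentially on the weight $\omega^{(\ell)}(x;\zeta)$ being supported on $[1,+\infty)$, so the crucial step is to trace the lower endpoint $x=1$ of the support through to the eigenvalue-zero identification.
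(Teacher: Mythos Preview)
Your proof is correct and follows precisely the route the paper indicates: the paper does not give a detailed argument but simply invokes Theorems~3.1 and~3.2 of \cite{SP:2011} (the standard three-term recurrence and zero-location theorems for orthogonal polynomials), and your write-up spells out exactly those standard arguments, together with the key observation that the support $[1,+\infty)$ forces all zeros---hence all eigenvalues of $\vec{J}_{k}^{(\ell)}$---to exceed $1$.
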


Besides those, the recurrence relations between  $\{P_{k}^{(\ell)}(x;\zeta)\}$ and  $\{P_{k}^{(\ell-1)}(x;\zeta)\}$
can also be obtained.

\begin{thm}\label{thm:huxiang}
 For $\ell\in\mathbb{N}^{+}$, it holds:\\
{\tt(i)} The three-term recurrence relations between $\{P_{k}^{(\ell)}(x;\zeta)\}$ and $\{P_{k}^{(\ell-1)}(x;\zeta)\}$ can be given
by
\begin{align}
 &\frac{2\ell-1}{2\ell+1}(x^2-1)P_{k}^{(\ell)}=p_{k}^{(\ell)}P_{k}^{(\ell-1)}+q_{k}^{(\ell)}P_{k+1}^{(\ell-1)}+r_{k+1}^{(\ell)}P_{k+2}^{(\ell-1)},\label{eq:recP01}\\
    &P_{k+1}^{(\ell-1)}=r_{k}^{(\ell)}P_{k-1}^{(\ell)}+q_{k}^{(\ell)}P_{k}^{(\ell)}+p_{k+1}^{(\ell)}P_{k+1}^{(\ell)},\label{eq:recP10}
\end{align}
or in the matrix-vector form
\begin{align}
\vec{P}_{k+1}^{(\ell-1)}&=(\vec{\tilde{J}}_{k}^{(\ell)})^{T}\vec{P}_{k}^{(\ell)}+p_{k+1}^{(\ell)}P_{k+1}^{(\ell)}\vec{e}_{k+2},\label{eq:recPQ}\\
\frac{2\ell-1}{2\ell+1}(x^2-1)\vec{P}_{k}^{(\ell)}&=\vec{\tilde{J}}_{k}^{(\ell)}\vec{P}_{k+1}^{(\ell-1)}+r_{k+1}^{(\ell)}P_{k+2}^{(\ell-1)}\vec{e}_{k+1}\label{eq:recQP},
\end{align}
where
\begin{equation}
\label{eq:pqrn}
\begin{aligned}
p_{k}^{(\ell)}:=\frac{c_{k}^{(\ell-1)}}{c_{k}^{(\ell)}},\quad
 r_{k}^{(\ell)}:=&\frac{2\ell-1}{2\ell+1}\frac{c_{k-1}^{(\ell)}}{c_{k+1}^{(\ell-1)}},\\
 q_{k}^{(\ell)}:=\frac{2\ell-1}{2\ell+1}\frac{c_{k}^{(\ell)}}{c_{k+1}^{(\ell-1)}}\left(\sum_{i=1}^{k+2}x_{i,k+2}^{(\ell-1)}-\sum_{i=1}^{k}x_{i,k}^{(\ell)}\right)
&=\frac{c_{k+1}^{(\ell-1)}}{c_{k}^{(\ell)}}
\sum_{i=1}^{k+1}  ( x_{i,k+1}^{(\ell)}-x_{i,k+1}^{(\ell-1)}),
\end{aligned}
\end{equation}
 and
\[
    \vec{\tilde{J}}_{k}^{(\ell)}:=\begin{pmatrix}
    p_{0}^{(\ell)} & q_{0}^{(\ell)} & r_{1}^{(\ell)} & 0     & 0 & \cdots & 0\\
    0     & p_{1}^{(\ell)} & q_{1}^{(\ell)} & r_{2}^{(\ell)} & 0 & \cdots & 0\\
    \     & \ddots& \ddots& \ddots& \ & \      & \ \\
    \     & \     &     \ &    \  & 0 & p_{k}^{(\ell)}  & q_{k}^{(\ell)}
    \end{pmatrix}
    \in {\mathbb R}^{(k+1)\times (k+2)}.
\]
{\tt (ii)}   The two-term recurrence relations
 between  $\{P_{k}^{(\ell-1)}(x;\zeta)\}$ and  $\{P_{k}^{(\ell)}(x;\zeta)\}$
 can be derived as follows
\begin{align}
    &\frac{2\ell-1}{2\ell+1}(x^2-1)P_{k}^{(\ell)}=\tilde{p}_{k}^{(\ell)}(x+\tilde{q}_{k}^{(\ell)})P_{k+1}^{(\ell-1)}+\tilde{r}_{k}^{(\ell)}P_{k}^{(\ell-1)},
    \label{eq:recP01x}\\
    &P_{k+1}^{(\ell-1)}=\frac{2\ell-1}{2\ell+1}\frac{1}{\tilde{p}_{k}^{(\ell)}}(x-\tilde{q}_{k}^{(\ell)})P_{k}^{(\ell)}-\frac{a_{k-1}^{(\ell)}}{a_{k}^{(\ell-1)}}\tilde{r}_{k}^{(\ell)}P_{k-1}^{(\ell)},\label{eq:recP10x}
\end{align}
where
\begin{equation}
\label{eq:pqrnx}
\tilde{p}_{k}^{(\ell)}:=\frac{2\ell-1}{2\ell+1}\frac{c_{k}^{(\ell)}}{c_{k+1}^{(\ell-1)}},\
 \tilde{q}_{k}^{(\ell)}:=\sum_{i=1}^{k+1}x_{i,k+1}^{(\ell-1)}-\sum_{i=1}^{k}x_{i,k}^{(\ell)},
\ \tilde{r}_{k}^{(\ell)}:={p_{k}^{(\ell)}\left(1-\frac{2\ell+1}{2\ell-1}(\tilde{p}_{k}^{(\ell)})^{2}\right)}.
\end{equation}
\end{thm}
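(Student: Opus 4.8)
The plan rests on the single structural identity that ties the two weight functions together, namely
\[
\omega^{(\ell)}(x;\zeta)=\frac{2\ell-1}{2\ell+1}(x^2-1)\,\omega^{(\ell-1)}(x;\zeta),
\]
which follows at once from the definition \eqref{eq:omegal}. Its effect is that, for any polynomials $f,g$, one has $\bigl(\tfrac{2\ell-1}{2\ell+1}(x^2-1)f,g\bigr)_{\omega^{(\ell-1)}}=(f,g)_{\omega^{(\ell)}}$, so that the cross inner products $(P_i^{(\ell)},P_j^{(\ell-1)})_{\omega^{(\ell)}}$ act as the common bridge between the two families and will supply all of the coefficients $p,q,r$.

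First I would establish part {\tt (i)}. Since $\tfrac{2\ell-1}{2\ell+1}(x^2-1)P_k^{(\ell)}$ has degree $k+2$, I expand it in the orthonormal basis $\{P_j^{(\ell-1)}\}$ by \eqref{eq:P01orth-3}; by the weight identity each coefficient equals $(P_k^{(\ell)},P_j^{(\ell-1)})_{\omega^{(\ell)}}$, which vanishes for $j\le k-1$ by \eqref{eq:P01orth-2}, leaving precisely the three surviving terms of \eqref{eq:recP01}. Dually, expanding $P_{k+1}^{(\ell-1)}$ in $\{P_j^{(\ell)}\}$ and transferring the factor $\tfrac{2\ell-1}{2\ell+1}(x^2-1)$ onto $P_j^{(\ell)}$ through the weight identity shows the coefficient vanishes unless $j\ge k-1$, giving \eqref{eq:recP10}; by symmetry of the inner product the two expansions share the same $p,q,r$. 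Their closed forms \eqref{eq:pqrn} then follow by coefficient comparison: $p_k^{(\ell)}$ and $r_k^{(\ell)}$ are read from the top-degree terms (yielding the ratios of the leading constants $c_k^{(\ell)}$), while the two equivalent expressions for $q_k^{(\ell)}$ come from matching the subleading terms of \eqref{eq:recP01} and of \eqref{eq:recP10}, using that the $x^{k-1}$-coefficient of $P_k^{(\ell)}$ equals $-c_k^{(\ell)}\sum_{i=1}^{k}x_{i,k}^{(\ell)}$ by \eqref{eq:Preprezero}. The matrix forms \eqref{eq:recPQ}--\eqref{eq:recQP} are merely \eqref{eq:recP10} and \eqref{eq:recP01} stacked over the indices.

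For part {\tt (ii)} I would reduce each three-term relation of part {\tt (i)} to a two-term one by eliminating the outermost polynomial with the classical three-term recurrence \eqref{eq:recP0P1} of Theorem \ref{thm:rec}. To get \eqref{eq:recP01x}, substitute the recurrence $xP_{k+1}^{(\ell-1)}=a_k^{(\ell-1)}P_k^{(\ell-1)}+b_{k+1}^{(\ell-1)}P_{k+1}^{(\ell-1)}+a_{k+1}^{(\ell-1)}P_{k+2}^{(\ell-1)}$ into \eqref{eq:recP01} to remove $P_{k+2}^{(\ell-1)}$, and then collect the coefficients of $xP_{k+1}^{(\ell-1)}$, $P_{k+1}^{(\ell-1)}$ and $P_k^{(\ell-1)}$; with $a_k^{(\ell)}=c_k^{(\ell)}/c_{k+1}^{(\ell)}$ and $b_k^{(\ell)}=\sum_{i=1}^{k+1}x_{i,k+1}^{(\ell)}-\sum_{i=1}^{k}x_{i,k}^{(\ell)}$ from \eqref{eq:abn} these collapse exactly to $\tilde p_k^{(\ell)}$, $\tilde q_k^{(\ell)}$ and $\tilde r_k^{(\ell)}$ of \eqref{eq:pqrnx}. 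Relation \eqref{eq:recP10x} is obtained symmetrically by substituting $P_{k+1}^{(\ell)}=(a_k^{(\ell)})^{-1}\bigl(xP_k^{(\ell)}-a_{k-1}^{(\ell)}P_{k-1}^{(\ell)}-b_k^{(\ell)}P_k^{(\ell)}\bigr)$ into \eqref{eq:recP10} to eliminate $P_{k+1}^{(\ell)}$ and matching.

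The main obstacle is bookkeeping rather than conceptual. Reconciling the two forms of $q_k^{(\ell)}$ and verifying the factored identity $\tilde r_k^{(\ell)}=p_k^{(\ell)}\bigl(1-\tfrac{2\ell+1}{2\ell-1}(\tilde p_k^{(\ell)})^2\bigr)$ require carefully combining the leading-coefficient ratios $c_k^{(\ell)}/c_{k+1}^{(\ell)}$ with the sum-of-zeros identities and telescoping differences such as $\sum_{i=1}^{k+2}x_{i,k+2}^{(\ell-1)}-\sum_{i=1}^{k}x_{i,k}^{(\ell)}$, which collapse once $b_{k+1}^{(\ell-1)}$ is rewritten via \eqref{eq:abn}. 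Checking that the two independent subleading-term matchings in \eqref{eq:recP01} and \eqref{eq:recP10} yield the same $q_k^{(\ell)}$, and that the $P_{k-1}^{(\ell)}$-coefficient in \eqref{eq:recP10x} indeed simplifies to $-\tfrac{a_{k-1}^{(\ell)}}{a_k^{(\ell-1)}}\tilde r_k^{(\ell)}$, is the computational heart of the argument.
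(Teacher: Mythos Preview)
Your proposal is correct and follows essentially the same route as the paper: the weight identity $\omega^{(\ell)}=\tfrac{2\ell-1}{2\ell+1}(x^2-1)\omega^{(\ell-1)}$ is exactly what the paper exploits (implicitly) to compute the inner products $(P_k^{(\ell)},P_j^{(\ell-1)})_{\omega^{(\ell)}}$ that yield \eqref{eq:recP01}--\eqref{eq:recP10} with the coefficients \eqref{eq:pqrn}, and part {\tt (ii)} is likewise obtained by eliminating $P_{k+2}^{(\ell-1)}$ and $P_{k+1}^{(\ell)}$ via the three-term recurrence \eqref{eq:recP0P1}. The paper derives the two expressions for $q_k^{(\ell)}$ by computing the same inner product in two ways rather than by subleading-term matching as you propose, but these are equivalent bookkeeping choices.
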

\begin{proof}
(i)  For $i\leq k+2$,  taking the inner product with respect to $\omega^{(\ell-1)}$ between the polynomials  $P_{i}^{(\ell-1)}(x;\zeta)$
and $(x^2-1)P_{k}^{(\ell)}(x;\zeta)$ 
gives
\begin{align*}
\frac{2\ell-1}{2\ell+1}&\left((x^2-1)P_{k}^{(\ell)},P_{k+2}^{(\ell-1)}\right)_{\omega^{(\ell-1)}}=
\frac{2\ell-1}{2\ell+1}\left(c_{k}^{(\ell)}x^{k+2},P_{k+2}^{(\ell-1)}\right)_{\omega^{(\ell-1)}}
\\
&=\frac{2\ell-1}{2\ell+1}\frac{c_{k-1}^{(\ell)}}{c_{k+1}^{(\ell-1)}}
\left(P_{k+2}^{(\ell-1)},P_{k+2}^{(\ell-1)}\right)_{\omega^{(\ell-1)}}=r_{k+1},
\\
\frac{2\ell-1}{2\ell+1}&\left((x^2-1)P_{k}^{(\ell)},P_{k+1}^{(\ell-1)}\right)_{\omega^{(\ell-1)}}=
\frac{2\ell-1}{2\ell+1}\left(c_{k}^{(\ell)}\left(x^{k+2}-\sum_{i=1}^{k+2}
x_{i,k+2}^{(\ell-1)}x^{k+1}\right.\right.
\\
&+\left.\left.\left(\sum_{i=1}^{k+2}x_{i,k+2}^{(\ell-1)}-\sum_{i=1}^{k}
x_{i,n}^{(\ell)}\right)x^{k+1}\right),P_{k+1}^{(\ell-1)}\right)_{\omega^{(\ell-1)}}
\\
 &=r_{k+1}^{(\ell)}\left(P_{k+2}^{(\ell-1)},P_{k+1}^{(\ell-1)}\right)_{\omega^{(\ell-1)}}+
 q_{k}^{(\ell)}\left(P_{k+1}^{(\ell-1)},P_{k+1}^{(\ell-1)}\right)_{\omega^{(\ell-1)}}\\
 &=q_{k}^{(\ell)},\\
\frac{2\ell-1}{2\ell+1}&\left((x^2-1)P_{k}^{(\ell)},P_{k+1}^{(\ell-1)}\right)_{\omega^{(\ell-1)}}=\left(P_{k}^{(\ell)},c_{k+1}^{(\ell-1)}
\left(x^{k+1}-\sum_{i=1}^{k+1}x_{i,k+1}^{(\ell)}x^{k}\right.\right.\\
&+\left.\left.
\left(\sum_{i=1}^{k+1}x_{i,k+1}^{(\ell)}-\sum_{i=1}^{k+1}x_{i,k+1}^{(\ell-1)}\right)x^{k}\right)\right)_{\omega^{(\ell)}}\\
 &=p_{k+1}^{(\ell)}\left(P_{k}^{(\ell)},P_{k+1}^{(\ell)}\right)_{\omega^{(\ell)}}+\frac{c_{k+1}^{(\ell-1)}}{c_{k}^{(\ell)}}\sum_{i=1}^{k+1} \left(x_{i,k+1}^{(\ell)}-x_{i,k+1}^{(\ell-1)}\right)\left(P_{k}^{(\ell)},P_{k}^{(\ell)}\right)_{\omega^{(\ell)}}\\
 &=\frac{c_{k+1}^{(\ell-1)}}{c_{k}^{(\ell)}}
\sum_{i=1}^{k+1} \left(x_{i,k+1}^{(\ell)}-x_{i,k+1}^{(\ell-1)}\right)=q_{k}^{(\ell)},
 \\
\frac{2\ell-1}{2\ell+1}&\left((x^2-1)P_{k}^{(\ell)},P_{k}^{(\ell-1)}\right)_{\omega^{(\ell-1)}}
=\left(P_{k}^{(\ell)},P_{k}^{(\ell-1)}\right)_{\omega^{(\ell)}}=
   \left(P_{k}^{(\ell)},c_{k}^{(\ell-1)}x^{k}\right)_{\omega^{(\ell)}}\\
   &=p_{k}^{(\ell)}\left(P_{k}^{(\ell)},P_{k}^{(\ell)}\right)_{\omega^{(\ell)}}=
   p_{k}^{(\ell)},\\
    \frac{2\ell-1}{2\ell+1}&\left((x^2-1)P_{k}^{(\ell)},P_{i}^{(\ell-1)}\right)_{\omega^{(\ell-1)}}
    =\left(P_{k}^{(\ell)},P_{i}^{(\ell-1)}\right)_{\omega^{(\ell)}}=0, \quad i\leq k-1,
\end{align*}
Substituting them into  \eqref{eq:P01orth-3} gives  \eqref{eq:recP01}.

(ii) Taking the inner product with respect to $\omega^{(\ell)}$ between $P_{k+1}^{(\ell-1)} (x;\zeta)$
 and $P_{i}^{(\ell)} (x;\zeta)$  with $i\leq k+1$ 
\begin{align*}
\left(P_{k+1}^{(\ell-1)},P_{k+1}^{(\ell)}\right)_{\omega^{(\ell)}}&= \left(c_{k+1}^{(\ell-1)}x^{k+1},P_{k+1}^{(\ell)}\right)_{\omega^{(\ell)}}
=p_{k+1}\left(P_{k+1}^{(\ell)},P_{k+1}^{(\ell)}\right)_{\omega^{(\ell)}}=p_{k+1}^{(\ell)},\\
\left(P_{k+1}^{(\ell-1)},P_{k}^{(\ell)}\right)_{\omega^{(\ell)}}&= \frac{2\ell-1}{2\ell+1}\left(P_{k+1}^{(\ell-1)},(x^2-1)P_{k}^{(\ell)}\right)_{\omega^{(\ell-1)}}=q_{k}^{(\ell)},
\\
\left(P_{k+1}^{(\ell-1)},P_{k-1}^{(\ell)}\right)_{\omega^{(\ell)}}&=\frac{2\ell-1}{2\ell+1}\left(P_{k+1}^{(\ell-1)},(x^2-1)P_{k-1}^{(\ell)}\right)_{\omega^{(\ell-1)}}=
r_{k}^{(\ell)}\left(P_{k+1}^{(\ell-1)},P_{k+1}^{(\ell-1)}\right)_{\omega^{(\ell-1)}}=r_{k}^{(\ell)},\\
    \left(P_{k+1}^{(\ell-1)},P_{i}^{(\ell)}\right)_{\omega^{(\ell)}}
    &=\frac{2\ell-1}{2\ell+1}\left(P_{k+1}^{(\ell-1)},(x^2-1)P_{i}^{(\ell)}\right)_{\omega^{(\ell-1)}}=0, \quad i\leq k-2.
\end{align*}
Similarly, substituting them into  \eqref{eq:P01orth-3} gives    \eqref{eq:recP10}.

(iii) If using \eqref{eq:recP0P1} to eliminate $P_{k+2}^{(\ell-1)}$ and $P_{k+1}^{(\ell)}$ in \eqref{eq:recP01} and \eqref{eq:recP10} respectively, then one yields
 \begin{align*}
  &  \frac{2\ell-1}{2\ell+1}(x^2-1)P_{k}^{(\ell)}=\tilde{p}_{k}^{(\ell)}
    (x+\tilde{q}_{k}^{(\ell)})P_{k+1}^{(\ell-1)}+\tilde{r}_{k}^{(\ell)}P_{k}^{(\ell-1)},
    \\
   & P_{k+1}^{(\ell-1)}=\frac{2\ell-1}{2\ell+1}\frac{1}{\tilde{\tilde{p}}_{k}^{(\ell)}}(x-\tilde{\tilde{q}}_{k}^{(\ell)})P_{k}^{(\ell)}-\frac{a_{k-1}^{(\ell)}}{a_{k}^{(\ell-1)}}\tilde{\tilde{r}}_{k}^{(\ell)}P_{k-1}^{(\ell)},
\end{align*}
with 
\[
    \tilde{p}_{k}^{(\ell)}=\frac{r_{k+1}^{(\ell)}}{a_{k+1}^{(\ell-1)}}=\frac{2\ell-1}{2\ell+1}\frac{c_{k}^{(\ell)}}{c_{k+1}^{(\ell-1)}}
    =\frac{2\ell-1}{2\ell+1}\frac{a_{k}^{(\ell)}}{p_{k+1}^{(\ell)}}
    =\tilde{\tilde{p}}_{k},
\]
\begin{equation*}
  \tilde{q}_{k}^{(\ell)}=\frac{1}{\tilde{p}_{k}^{(\ell)}}q_{k}^{(\ell)}-b_{k+1}^{(\ell-1)}=\sum_{i=1}^{k+1}x_{i,k+1}^{(\ell-1)}-\sum_{i=1}^{k}x_{i,k}^{(\ell)}
  =b_{k}^{(\ell)}-\frac{2\ell+1}{2\ell-1}\tilde{p}_{k}^{(\ell)}q_{k}^{(\ell)}=\tilde{\tilde{q}}_{k}^{(\ell)},
\end{equation*}
\begin{equation*}
 \tilde{r}_{k}^{(\ell)}=p_{k}^{(\ell)}-\tilde{p}_{k}^{(\ell)}a_{k}^{(\ell-1)}={p_{k}^{(\ell)}\left(1-\frac{2\ell+1}{2\ell-1}(\tilde{p}_{k}^{(\ell)})^{2}\right)}
 =\frac{a_{k}^{(\ell-1)}}{a_{k-1}^{(\ell)}}\left(-r_{k}^{(\ell)}+\frac{2\ell-1}{2\ell+1}\frac{1}{\tilde{p}_{k}^{(\ell)}}a_{k-1}^{(\ell)}\right)=
 \tilde{\tilde{r}}_{k}^{(\ell)}.
\end{equation*}
The proof is completed.
\qed\end{proof}

\subsection{Partial derivatives}\label{subsec:deriva}
This section
calculates the derivatives of the polynomial $P_{k}^{(\ell)} (x;\zeta)$ with respect to $x$ and $\zeta$,
$\ell\in\mathbb{N}$.
\begin{thm}\label{thm:derivezeta}
	For $\ell\in\mathbb{N}$,
	 the first-order derivative of  the polynomial $P_{k+1}^{(\ell)} (x;\zeta)$
	with respect to the parameter $\zeta$ satisfies
\begin{equation}
 \frac{\partial P_{k+1}^{(\ell)}}{\partial \zeta}=a_{k}^{(\ell)}P_{k}^{(\ell)}-\frac{1}{2}\left(G(\zeta)-\zeta^{-1}-b_{k+1}^{(\ell)}\right)P_{k+1}^{(\ell)}.
 \label{eq:partialPnzeta}
 \end{equation}
\end{thm}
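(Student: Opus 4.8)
The plan is to differentiate the orthonormality relations with respect to the parameter $\zeta$ and to read off the expansion coefficients of $\partial P_{k+1}^{(\ell)}/\partial\zeta$ in the orthonormal basis $\{P_i^{(\ell)}\}$. Since $P_{k+1}^{(\ell)}(x;\zeta)$ is a polynomial in $x$ of degree $k+1$ whose coefficients depend smoothly on $\zeta$, its $\zeta$-derivative is again a polynomial in $x$ of degree at most $k+1$, so it admits the finite expansion
\[
\frac{\partial P_{k+1}^{(\ell)}}{\partial\zeta}=\sum_{i=0}^{k+1}d_i\,P_i^{(\ell)},
\qquad
d_i=\left(\frac{\partial P_{k+1}^{(\ell)}}{\partial\zeta},\,P_i^{(\ell)}\right)_{\omega^{(\ell)}}.
\]
The target identity \eqref{eq:partialPnzeta} is then equivalent to proving $d_i=0$ for $i\le k-1$, $d_k=a_k^{(\ell)}$, and $d_{k+1}=-\tfrac12\bigl(G(\zeta)-\zeta^{-1}-b_{k+1}^{(\ell)}\bigr)$.

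The essential ingredient is the logarithmic $\zeta$-derivative of the weight \eqref{eq:omegal}. Taking the logarithm of $\omega^{(\ell)}$ and differentiating gives $\partial_\zeta\ln\omega^{(\ell)}=\zeta^{-1}-K_2'(\zeta)/K_2(\zeta)-x$. I would then evaluate $K_2'/K_2$ using the standard derivative formula $K_\nu'=-\tfrac12(K_{\nu-1}+K_{\nu+1})$ together with the recurrence \eqref{eq:besselrec}, which yields $K_2'(\zeta)=\tfrac{2}{\zeta}K_2(\zeta)-K_3(\zeta)$ and hence $K_2'/K_2=2\zeta^{-1}-G(\zeta)$, since $G(\zeta)=K_3(\zeta)/K_2(\zeta)$. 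This collapses the expression to the clean form
\[
\frac{\partial\omega^{(\ell)}}{\partial\zeta}=\bigl(G(\zeta)-\zeta^{-1}-x\bigr)\,\omega^{(\ell)}.
\]

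Next I would differentiate $(P_{k+1}^{(\ell)},P_i^{(\ell)})_{\omega^{(\ell)}}=\delta_{k+1,i}$ under the integral sign (justified by the exponential decay of the weight). For indices $i\le k$, the cross term $(P_{k+1}^{(\ell)},\partial_\zeta P_i^{(\ell)})_{\omega^{(\ell)}}$ vanishes, because $\partial_\zeta P_i^{(\ell)}$ has degree $\le i\le k$ and $P_{k+1}^{(\ell)}$ is orthogonal to all such polynomials by \eqref{eq:P01orth-2}; moreover $\delta_{k+1,i}=0$. Using the weight-derivative formula, the surviving term reduces to $-(xP_{k+1}^{(\ell)},P_i^{(\ell)})_{\omega^{(\ell)}}$, so that $d_i=(xP_{k+1}^{(\ell)},P_i^{(\ell)})_{\omega^{(\ell)}}$. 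Invoking the three-term recurrence \eqref{eq:recP0P1} and the definition of $a_k^{(\ell)}$ in \eqref{eq:abn} immediately gives $d_i=0$ for $i\le k-1$ and $d_k=a_k^{(\ell)}$. For the diagonal coefficient $d_{k+1}$ the orthogonality shortcut fails, so instead I would differentiate the normalization $(P_{k+1}^{(\ell)},P_{k+1}^{(\ell)})_{\omega^{(\ell)}}=1$, obtaining $2d_{k+1}+\int_1^{+\infty}(P_{k+1}^{(\ell)})^2\,\partial_\zeta\omega^{(\ell)}\,dx=0$; the integral equals $\bigl(G(\zeta)-\zeta^{-1}\bigr)-b_{k+1}^{(\ell)}$ by the weight-derivative formula and the definition $b_{k+1}^{(\ell)}=(xP_{k+1}^{(\ell)},P_{k+1}^{(\ell)})_{\omega^{(\ell)}}$ in \eqref{eq:abn}, and solving yields the claimed value of $d_{k+1}$.

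The main obstacle I anticipate is purely computational: correctly reducing the Bessel logarithmic derivative $K_2'/K_2$ to $G(\zeta)-2\zeta^{-1}$, since an error here propagates into every coefficient. Once the weight-derivative formula $\partial_\zeta\omega^{(\ell)}=(G(\zeta)-\zeta^{-1}-x)\omega^{(\ell)}$ is in hand, the rest of the argument is mechanical, following from orthonormality \eqref{eq:P01orth-2} and the three-term recurrence \eqref{eq:recP0P1}. A minor technical point to record, but not belabor, is the legitimacy of differentiating under the integral, which follows from the smooth and exponentially decaying dependence of $\omega^{(\ell)}$ on $\zeta$.
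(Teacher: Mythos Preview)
Your proposal is correct and follows essentially the same approach as the paper: compute $\partial_\zeta\omega^{(\ell)}=(G(\zeta)-\zeta^{-1}-x)\omega^{(\ell)}$, differentiate the orthonormality relations $(P_{k+1}^{(\ell)},P_i^{(\ell)})_{\omega^{(\ell)}}=\delta_{k+1,i}$ under the integral sign, and read off the expansion coefficients using the three-term recurrence and the degree bound on $\partial_\zeta P_{k+1}^{(\ell)}$. The paper's proof is organized case-by-case in $i$ but is otherwise identical in content.
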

\begin{proof}
With the aid of  definition \eqref{eq:bessel} and   recurrence relation \eqref{eq:besselrec} of the second kind modified
Bessel functions, one has
\begin{align*}
\frac{\partial }{\partial \zeta}\omega^{(\ell)}(x;\zeta)
&=\frac{K_{3}(\zeta)+K_{1}(\zeta)+2\zeta^{-1} K_{2}(\zeta)-2xK_{2}(\zeta)}{2K_{2}(\zeta)}\left(\frac{\zeta}{(2\ell+1)K_{2}(\zeta)}(x^2-1)^{\ell-\frac{1}{2}}\exp(-\zeta x)\right)\\
&=\left(G(\zeta)-\zeta^{-1}-x\right)\omega^{(\ell)}(x;\zeta).
\end{align*}
Taking the partial derivative of both sides of identities
\[
    \left(P_{k+1}^{(\ell)},P_{i}^{(\ell)}\right)_{\omega^{(\ell)}}=\delta_{k+1,i}, \ i=0,\cdots,k+1,
\]
 with respect to $\zeta$ and using \eqref{eq:abn} gives
\begin{align*}
    \frac{\partial }{\partial \zeta}
    \left(P_{k+1}^{(\ell)},P_{k+1}^{(\ell)}\right)_{\omega^{(\ell)}}
    =&2\left(\frac{\partial }{\partial \zeta}P_{k+1}^{(\ell)},P_{k+1}^{(\ell)}\right)_{\omega^{(\ell)}}
    +\left(G(\zeta)-\zeta^{-1}\right)\left(P_{k+1}^{(\ell)},P_{k+1}^{(\ell)}\right)_{\omega^{(\ell)}}-\left(xP_{k+1}^{(\ell)},P_{k+1}^{(\ell)}\right)_{\omega^{(\ell)}}\\
    =&2\left(\frac{\partial }{\partial \zeta}P_{k+1}^{(\ell)},P_{k+1}^{(\ell)}\right)_{\omega^{(\ell)}}
    +\left(G(\zeta)-\zeta^{-1}-b_{k+1}^{(\ell)}\right)=0,\\
    \frac{\partial }{\partial \zeta}
    \left(P_{k+1}^{(\ell)},P_{k}^{(\ell)}\right)_{\omega^{(\ell)}}
    =&\left(\frac{\partial }{\partial \zeta}P_{k+1}^{(\ell)},P_{k}^{(\ell)}\right)_{\omega^{(\ell)}}
    +\left(P_{k+1}^{(\ell)},\frac{\partial }{\partial \zeta}P_{k}^{(\ell)}\right)_{\omega^{(\ell)}}\\
    &+\left(G(\zeta)-\zeta^{-1}\right)\left(P_{k+1}^{(\ell)},P_{k}^{(\ell)}\right)_{\omega^{(\ell)}}-\left(xP_{k}^{(\ell)},P_{k+1}^{(\ell)}\right)_{\omega^{(\ell)}}\\
    =&\left(\frac{\partial }{\partial \zeta}P_{k+1}^{(\ell)},P_{k}^{(\ell)}\right)_{\omega^{(\ell)}}
    -a_{k}^{(\ell)}=0,\\
    \frac{\partial }{\partial \zeta}
    \left(P_{k+1}^{(\ell)},P_{i}^{(\ell)}\right)_{\omega^{(\ell)}}
    =&\left(\frac{\partial }{\partial \zeta}P_{k+1}^{(\ell)},P_{i}^{(\ell)}\right)_{\omega^{(\ell)}}
    +\left(P_{k+1}^{(\ell)},\frac{\partial }{\partial \zeta}P_{i}^{(\ell)}\right)_{\omega^{(\ell)}}\\
    &+\left(G(\zeta)-\zeta^{-1}\right)\left(P_{k+1}^{(\ell)},P_{i}^{(\ell)}\right)_{\omega^{(\ell)}}-\left(xP_{i}^{(\ell)},P_{k+1}^{(\ell)}\right)_{\omega^{(\ell)}}\\
    =&\left(\frac{\partial }{\partial \zeta}P_{k+1}^{(\ell)},P_{i}^{(\ell)}\right)_{\omega^{(\ell)}}
    =0,\quad i\leq k-1.
\end{align*}
Thus one has
\begin{align*}
    \left(\frac{\partial }{\partial \zeta}P_{k+1}^{(\ell)},P_{k+1}^{(\ell)}\right)_{\omega^{(\ell)}}
    &=-\frac{1}{2}\left(G(\zeta)-\zeta^{-1}-b_{k+1}^{(\ell)}\right),\\
    \left(\frac{\partial }{\partial \zeta}P_{k+1}^{(\ell)},P_{k}^{(\ell)}\right)_{\omega^{(\ell)}}
    &= a_{k}^{(\ell)},\quad
    \left(\frac{\partial }{\partial \zeta}P_{k+1}^{(\ell)},P_{i}^{(\ell)}\right)_{\omega^{(\ell)}}
    =0,\quad i\leq k-1.
\end{align*}
Because  $\frac{\partial P_{k+1}^{(\ell)}}{\partial \zeta}$ is
a polynomial and its degree is not larger than $ k+1$,
using \eqref{eq:P01orth-3} gives  \eqref{eq:partialPnzeta}.
The proof is completed.
\qed\end{proof}

\begin{thm}
\label{thm:derivex}
For $\ell\in\mathbb{N}^+$,
the first-order derivatives of  the polynomials $\{P_{k}^{(\ell)} (x;\zeta)\}$
with respect to  the independent variable $x$ satisfy
 \begin{align}
     &\frac{\partial P_{k+1}^{(\ell-1)}}{\partial x}=\frac{2\ell-1}{2\ell+1}\frac{k+1}{\tilde{p}_{k}^{(\ell)}}P_{k}^{(\ell)}+
     \zeta r_{k}^{(\ell)}P_{k-1}^{(\ell)}, \label{eq:derivePn0x}\\
     &\frac{2\ell-1}{2\ell+1}(x^2-1)\frac{\partial P_{k}^{(\ell)}}{\partial x}+(2\ell-1)xP_{k}^{(\ell)}=(k+2\ell+1)\tilde{p}_{k}^{(\ell)}P_{k+1}^{(\ell-1)}+
     \zeta p_{k}^{(\ell)} P_{k}^{(\ell-1)}.
     \label{eq:derivePn1x}
\end{align}
\end{thm}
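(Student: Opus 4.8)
The plan is to prove each identity by expanding its (polynomial) left-hand side in the appropriate orthonormal family through the expansion formula \eqref{eq:P01orth-3}, so that every expansion coefficient becomes an inner product that I can evaluate either by a leading-coefficient comparison (for the top-degree coefficient) or by orthogonality after a single integration by parts (for the remaining coefficients). The one structural fact I will use throughout is the elementary weight relation
\begin{equation*}
\omega^{(\ell)}(x;\zeta)=\frac{2\ell-1}{2\ell+1}(x^2-1)\,\omega^{(\ell-1)}(x;\zeta),
\end{equation*}
immediate from \eqref{eq:omegal}, together with the logarithmic derivative $\partial_x\ln\omega^{(\ell)}=(2\ell+1)x/(x^2-1)-\zeta$. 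Since $(x^2-1)^{\ell+\frac12}$ vanishes at $x=1$ and $\omega^{(\ell)}$ decays like $x^{2\ell+1}e^{-\zeta x}$ at infinity, all boundary terms arising in the integrations by parts below vanish.

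For \eqref{eq:derivePn0x} I note that $\partial_x P_{k+1}^{(\ell-1)}$ has degree $k$ and so lies in the span of $P_0^{(\ell)},\dots,P_k^{(\ell)}$; by \eqref{eq:P01orth-3} its coefficients are $(\partial_x P_{k+1}^{(\ell-1)},P_i^{(\ell)})_{\omega^{(\ell)}}$. For the top coefficient $i=k$ I match leading terms: since $\partial_x P_{k+1}^{(\ell-1)}=(k+1)c_{k+1}^{(\ell-1)}x^k+\cdots$ and $(x^k,P_k^{(\ell)})_{\omega^{(\ell)}}=1/c_k^{(\ell)}$, the coefficient is $(k+1)c_{k+1}^{(\ell-1)}/c_k^{(\ell)}=\frac{2\ell-1}{2\ell+1}(k+1)/\tilde p_k^{(\ell)}$ by the definition of $\tilde p_k^{(\ell)}$ in \eqref{eq:pqrnx}. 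For $i\le k-1$ I integrate by parts and convert every appearance of the singular factor $x/(x^2-1)$ back into $\omega^{(\ell-1)}$ via the weight relation, keeping all integrands polynomial; this gives
\begin{equation*}
\Bigl(\partial_x P_{k+1}^{(\ell-1)},P_i^{(\ell)}\Bigr)_{\omega^{(\ell)}}=-\Bigl(P_{k+1}^{(\ell-1)},S_i\Bigr)_{\omega^{(\ell-1)}},\quad S_i:=\tfrac{2\ell-1}{2\ell+1}(x^2-1)\bigl(\partial_x P_i^{(\ell)}-\zeta P_i^{(\ell)}\bigr)+(2\ell-1)xP_i^{(\ell)},
\end{equation*}
with $\deg S_i=i+2$. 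Orthogonality kills this for $i\le k-2$, while for $i=k-1$ only the leading term $-\zeta\frac{2\ell-1}{2\ell+1}c_{k-1}^{(\ell)}x^{k+1}$ of $S_{k-1}$ survives, producing $\zeta\frac{2\ell-1}{2\ell+1}c_{k-1}^{(\ell)}/c_{k+1}^{(\ell-1)}=\zeta r_k^{(\ell)}$ by \eqref{eq:pqrn}. Assembling the two nonzero coefficients gives \eqref{eq:derivePn0x}.

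For \eqref{eq:derivePn1x} I expand the degree-$(k+1)$ polynomial $L:=\frac{2\ell-1}{2\ell+1}(x^2-1)\partial_x P_k^{(\ell)}+(2\ell-1)xP_k^{(\ell)}$ in $P_0^{(\ell-1)},\dots,P_{k+1}^{(\ell-1)}$. The top coefficient again follows from leading terms: the leading coefficient of $L$ is $(2\ell-1)c_k^{(\ell)}(k+2\ell+1)/(2\ell+1)$, so the coefficient of $P_{k+1}^{(\ell-1)}$ is $(k+2\ell+1)\tilde p_k^{(\ell)}$. For $j\le k$ the same integration-by-parts trick, using the weight relation to rewrite $\partial_x\omega^{(\ell)}$, yields
\begin{equation*}
\bigl(L,P_j^{(\ell-1)}\bigr)_{\omega^{(\ell-1)}}=-\Bigl(P_k^{(\ell)},\partial_x P_j^{(\ell-1)}\Bigr)_{\omega^{(\ell)}}+\zeta\bigl(P_k^{(\ell)},P_j^{(\ell-1)}\bigr)_{\omega^{(\ell)}},
\end{equation*}
the key point being that the term $(2\ell-1)\int xP_k^{(\ell)}P_j^{(\ell-1)}\omega^{(\ell-1)}$ from the differentiated weight exactly cancels the second summand of $L$. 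Both inner products vanish for $j\le k-1$ by degree count; for $j=k$ the first vanishes ($\deg\partial_x P_k^{(\ell-1)}=k-1$) and the second equals $\zeta p_k^{(\ell)}$, recalling $(P_k^{(\ell)},P_k^{(\ell-1)})_{\omega^{(\ell)}}=p_k^{(\ell)}$ established in the proof of Theorem \ref{thm:huxiang}. This gives \eqref{eq:derivePn1x}.

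The step I expect to be the main obstacle is handling the singular factor $x/(x^2-1)$ that appears whenever $\omega^{(\ell)}$ is differentiated: taken at face value it destroys the polynomial structure on which the orthogonality arguments depend. The resolution is to recognize at each occurrence that $\frac{(2\ell+1)x}{x^2-1}\,\omega^{(\ell)}=(2\ell-1)x\,\omega^{(\ell-1)}$, so that the offending term is reabsorbed into a polynomial integrand taken against $\omega^{(\ell-1)}$. Carefully tracking which weight each integral is evaluated against, and the degree bookkeeping that dictates which coefficients survive, is where essentially all the care resides.
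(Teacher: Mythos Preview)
Your proposal is correct and follows essentially the same route as the paper: expand each left-hand side in the appropriate orthonormal family via \eqref{eq:P01orth-3}, read off the top coefficient by matching leading terms, and obtain the remaining coefficients by a single integration by parts using the weight identity $\partial_x\omega^{(\ell)}=(2\ell-1)x\,\omega^{(\ell-1)}-\zeta\,\omega^{(\ell)}$ (your formulation via $\omega^{(\ell)}=\tfrac{2\ell-1}{2\ell+1}(x^2-1)\omega^{(\ell-1)}$ is equivalent). The only cosmetic difference is that for the $i=k-1$ coefficient in \eqref{eq:derivePn0x} the paper invokes the already-computed inner product $\tfrac{2\ell-1}{2\ell+1}\bigl((x^2-1)P_{k-1}^{(\ell)},P_{k+1}^{(\ell-1)}\bigr)_{\omega^{(\ell-1)}}=r_k^{(\ell)}$ from Theorem~\ref{thm:huxiang}, whereas you extract the leading coefficient of $S_{k-1}$ directly; both yield $\zeta r_k^{(\ell)}$ immediately.
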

\begin{proof}
Similar to the proof of Theorem \ref{thm:derivezeta}, one has
\[
{\frac{\partial }{\partial x}\omega^{(\ell)}(x;\zeta)
=(2\ell-1)x\omega^{(\ell-1)}(x;\zeta)-\zeta\omega^{(\ell)}(x;\zeta).}
\]
Because the degrees of polynomials
$\frac{\partial P_{k+1}^{(\ell-1)}}{\partial x}$ and
$\frac{2\ell-1}{2\ell+1}(x^2-1)\frac{\partial P_{k}^{(\ell)}}{\partial x}+(2\ell-1)xP_{k}^{(\ell)}$
are not larger than  $ n$ and  $k+1$, respectively,
and
\[
\lim_{x\rightarrow+\infty}P_{i}^{(\ell-1)}(x;\zeta)P_{j}^{(\ell)}(x;\zeta)\omega^{(\ell)}(x;\zeta)=0,\quad \lim_{x\rightarrow1}P_{i}^{(\ell-1)}(x;\zeta)P_{j}^{(\ell)}(x;\zeta)\omega^{(\ell)}(x;\zeta)=0,\quad \forall i,j\in\mathbb{K},
\]
one can calculate the expansion coefficients in  \eqref{eq:P01orth-3} as follows
\begin{align*}
&\left(\frac{\partial }{\partial x}P_{k+1}^{(\ell-1)},P_{k}^{(\ell)}\right)_{\omega^{(\ell)}}=
\left((k+1)c_{k+1}^{(\ell-1)}x^{k},P_{k}^{(\ell)}\right)_{\omega^{(\ell)}}=
\frac{2\ell-1}{2\ell+1}\frac{k+1}{\tilde{p}_{k}^{(\ell)}}\left(P_{k}^{(\ell)},P_{k}^{(\ell)}\right)_{\omega^{(\ell)}}
=\frac{2\ell-1}{2\ell+1}\frac{k+1}{\tilde{p}_{k}^{(\ell)}},\\
&\left(\frac{\partial }{\partial x}P_{k+1}^{(\ell-1)},P_{k-1}^{(\ell)}\right)_{\omega^{(\ell)}}=
\int_{1}^{+\infty}\frac{\partial }{\partial x}\left(P_{k+1}^{(\ell-1)}P_{k-1}^{(\ell)}\omega^{(\ell)}\right)dx
-\frac{2\ell-1}{2\ell+1}\left(P_{k+1}^{(\ell-1)},(x^2-1)\frac{\partial }{\partial x}P_{k-1}^{(\ell)}\right)_{\omega^{(\ell-1)}}\\
&-(2\ell-1)\left(P_{k+1}^{(\ell-1)},xP_{k-1}^{(\ell)}\right)_{\omega^{(\ell-1)}}+\zeta\frac{2\ell-1}{2\ell+1}\left(P_{k+1}^{(\ell-1)},(x^2-1)P_{k-1}^{(\ell)}\right)_{\omega^{(\ell-1)}}
=\zeta r_{k}^{(\ell)},\\
&\left(\frac{\partial }{\partial x}P_{k+1}^{(\ell-1)},P_{i}^{(\ell)}\right)_{\omega^{(\ell)}}=
\int_{1}^{+\infty}\frac{\partial }{\partial x}\left(P_{k+1}^{(\ell-1)}P_{i}^{(\ell)}\omega^{(\ell)}\right)dx
-\frac{2\ell-1}{2\ell+1}\left(P_{k+1}^{(\ell-1)},(x^2-1)\frac{\partial }{\partial x}P_{i}^{(\ell)}\right)_{\omega^{(\ell-1)}}\\
&-(2\ell-1)\left(P_{k+1}^{(\ell-1)},xP_{i}^{(\ell)}\right)_{\omega^{(\ell-1)}}
+\zeta\frac{2\ell-1}{2\ell+1}\left(P_{k+1}^{(\ell-1)},(x^2-1)P_{i}^{(\ell)}\right)_{\omega^{(\ell-1)}}
=0,\ i\leq k-2.
\end{align*}
and
%
\begin{align*}
&\left(\frac{2\ell-1}{2\ell+1}(x^2-1)\frac{\partial }{\partial x}P_{k}^{(\ell)}+(2\ell-1)xP_{k}^{(\ell)},P_{k+1}^{(\ell-1)}\right)_{\omega^{(\ell-1)}}=
\frac{2\ell-1}{2\ell+1}\left((k+2\ell+1)c_{k}^{(\ell)}x^{k+1},P_{k+1}^{(\ell-1)}\right)_{\omega^{(\ell-1)}}
\\
&=(k+2\ell+1)\tilde{p}_{k}^{(\ell)}\left(P_{k+1}^{(\ell-1)},P_{k+1}^{(\ell-1)}\right)_{\omega^{(\ell-1)}}
=(k+2\ell+1)\tilde{p}_{k}^{(\ell)},
\\
&\left(\frac{2\ell-1}{2\ell+1}(x^2-1)\frac{\partial }{\partial x}P_{k}^{(\ell)}+(2\ell-1)xP_{k}^{(\ell)},P_{k}^{(\ell-1)}\right)_{\omega^{(\ell-1)}}=
\int_{1}^{+\infty}\frac{\partial }{\partial x}\left(P_{k}^{(\ell)}P_{k}^{(\ell-1)}\omega^{(\ell)}\right)dx\\
&-\left(P_{k}^{(\ell)},\frac{\partial }{\partial x}P_{k}^{(\ell-1)}\right)_{\omega^{(\ell)}}
+\zeta\left(P_{k}^{(\ell)},P_{k}^{(\ell-1)}\right)_{\omega^{(\ell)}}
=\zeta\left(P_{k}^{(\ell)},P_{k}^{(\ell-1)}\right)_{\omega^{(\ell)}}=\zeta p_{k}^{(\ell)},
\\
&\left(\frac{2\ell-1}{2\ell+1}(x^2-1)\frac{\partial }{\partial x}P_{k}^{(\ell)}+(2\ell-1)xP_{k}^{(\ell)},P_{i}^{(\ell-1)}\right)_{\omega^{(\ell-1)}}=
\int_{1}^{+\infty}\frac{\partial }{\partial x}\left(P_{k}^{(\ell)}P_{i}^{(\ell-1)}\omega^{(\ell)}\right)dx\\
&-\left(P_{k}^{(\ell)},\frac{\partial }{\partial x}P_{i}^{(\ell-1)}\right)_{\omega^{(\ell)}}
+\zeta\left(P_{k}^{(\ell)},P_{i}^{(\ell-1)}\right)_{\omega^{(\ell)}}=0,\ i\leq k-1.
\end{align*}
The proof is completed.
\qed\end{proof}

\subsection{Zeros}
\label{subsec:zeros}
Using the separation theorem of zeros of
general orthogonal polynomials 
 \cite{SP:2011} gives the following conclusion on
the  orthogonal polynomials $\{P_{k}^{(\ell)}(x;\zeta)\}$.

\begin{thm}
\label{thm:zerointerself}
For $\ell\in\mathbb{N}$, the zeros $\{x_{i,k}^{(\ell)}\}_{i=1}^{k}$ of $P_{k}^{(\ell)}(x;\zeta)$ and
$\{x_{i,k+1}^{(\ell)}\}_{i=1}^{k+1}$ of $P_{k+1}^{(\ell)}(x;\zeta)$ satisfy
the  separation property
\[
1<x_{1,k+1}^{(\ell)}<x_{1,k}^{(\ell)}<x_{2,k+1}^{(\ell)}<\cdots<x_{k,k}^{(\ell)}<x_{k+1,k+1}^{(\ell)}.
\]
\end{thm}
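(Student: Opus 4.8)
The plan is to read off the zeros of $P_{k+1}^{(\ell)}$ and $P_{k}^{(\ell)}$ as eigenvalues of the symmetric tridiagonal Jacobi matrices furnished by Theorem \ref{thm:rec}, and then to invoke the Cauchy interlacing theorem. First I would evaluate the matrix--vector recurrence \eqref{eq:recP0P1mat} at a zero $x=x_{i,k+1}^{(\ell)}$ of $P_{k+1}^{(\ell)}$; the last term drops out and one obtains
\[
x_{i,k+1}^{(\ell)}\,\vec{P}_{k}^{(\ell)}(x_{i,k+1}^{(\ell)})=\vec{J}_{k}^{(\ell)}\,\vec{P}_{k}^{(\ell)}(x_{i,k+1}^{(\ell)}),
\]
so that each zero is an eigenvalue of $\vec{J}_{k}^{(\ell)}$ with eigenvector $\vec{P}_{k}^{(\ell)}(x_{i,k+1}^{(\ell)})$, which is nonzero because its first entry $P_{0}^{(\ell)}$ is a nonzero constant. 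Since $\vec{J}_{k}^{(\ell)}$ is $(k+1)\times(k+1)$, symmetric, and tridiagonal with strictly positive subdiagonal entries $a_{i}^{(\ell)}>0$ (Theorem \ref{thm:rec}), it is irreducible and hence has $k+1$ distinct real eigenvalues; these must be exactly the $k+1$ zeros $\{x_{i,k+1}^{(\ell)}\}$. In the same way the $k$ zeros $\{x_{i,k}^{(\ell)}\}$ of $P_{k}^{(\ell)}$ are exactly the eigenvalues of $\vec{J}_{k-1}^{(\ell)}$, which is precisely the leading $k\times k$ principal submatrix of $\vec{J}_{k}^{(\ell)}$.

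With this identification the interlacing is a direct consequence of the Cauchy interlacing theorem applied to $\vec{J}_{k}^{(\ell)}$ and its leading principal submatrix $\vec{J}_{k-1}^{(\ell)}$: the $k$ eigenvalues of the submatrix interlace the $k+1$ eigenvalues of the full matrix,
\[
x_{1,k+1}^{(\ell)}\le x_{1,k}^{(\ell)}\le x_{2,k+1}^{(\ell)}\le\cdots\le x_{k,k}^{(\ell)}\le x_{k+1,k+1}^{(\ell)}.
\]
The remaining task, and the main obstacle, is to upgrade every weak inequality to a strict one. I would do this by showing that $P_{k}^{(\ell)}$ and $P_{k+1}^{(\ell)}$ share no zero: if $P_{k+1}^{(\ell)}(x_{\ast})=P_{k}^{(\ell)}(x_{\ast})=0$, then the scalar recurrence \eqref{eq:recP0P1}, together with $a_{k-1}^{(\ell)}>0$, forces $P_{k-1}^{(\ell)}(x_{\ast})=0$, and descending the recurrence yields $P_{0}^{(\ell)}(x_{\ast})=0$, contradicting $P_{0}^{(\ell)}\neq 0$. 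Because all eigenvalues of the irreducible tridiagonal $\vec{J}_{k}^{(\ell)}$ are simple and none coincides with an eigenvalue of $\vec{J}_{k-1}^{(\ell)}$ (no common zero), each inequality above is strict, giving the asserted chain with $<$ signs.

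Finally I would record the left endpoint: the leftmost inequality $1<x_{1,k+1}^{(\ell)}$ is not part of the matrix argument but follows from the already-noted fact that every zero of $P_{k+1}^{(\ell)}$ lies in the open interval $(1,+\infty)$ (see \eqref{eq:Preprezero}). An equivalent, purely analytic route would replace the matrix step by the Christoffel--Darboux identity: at a zero $x_{i,k+1}^{(\ell)}$ one gets $\frac{d}{dx}P_{k+1}^{(\ell)}(x_{i,k+1}^{(\ell)})\,P_{k}^{(\ell)}(x_{i,k+1}^{(\ell)})>0$, so $P_{k}^{(\ell)}$ changes sign between consecutive zeros of $P_{k+1}^{(\ell)}$ and thus has exactly one zero in each gap. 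I regard the Jacobi-matrix formulation as cleaner here, since $\vec{J}_{k}^{(\ell)}$ and its submatrix structure are already in hand from Theorem \ref{thm:rec}.
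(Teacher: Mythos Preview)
Your proof is correct. Note, however, that the paper does not actually supply its own argument for this theorem: it simply invokes the classical separation theorem for zeros of orthogonal polynomials, citing the spectral-methods text \cite{SP:2011}, and records the conclusion for the particular families $\{P_{k}^{(\ell)}\}$. So there is no ``paper's proof'' to compare against beyond that one-line appeal to the literature.

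What you have written is essentially one of the standard proofs of that classical result, specialized to the setting already set up in Theorem~\ref{thm:rec}: identify the zeros of $P_{k+1}^{(\ell)}$ and $P_{k}^{(\ell)}$ with the eigenvalues of the nested Jacobi matrices $\vec{J}_{k}^{(\ell)}$ and $\vec{J}_{k-1}^{(\ell)}$, apply Cauchy interlacing, and use the three-term recurrence with $a_{i}^{(\ell)}>0$ to rule out common zeros and hence obtain strict inequalities. The Christoffel--Darboux alternative you sketch is the other textbook route and would work equally well. Your observation that the leftmost bound $1<x_{1,k+1}^{(\ell)}$ comes from the support of the weight rather than from interlacing is also correct and worth keeping explicit.
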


There is still another important separation property for the
zeros of  the orthogonal polynomials $\{P_{k}^{(\ell)}(x;\zeta), \ell\in\mathbb{N}\}$.

\begin{thm}
\label{thm:zerointer01}
The $k$ zeros $\{x_{i,k}^{(\ell)}\}_{i=1}^{k}$ of $P_{k}^{(\ell)}$ and
$(k+1)$ zeros of $\{x_{i,k+1}^{(\ell-1)}\}_{i=1}^{k+1}$ of $P_{k+1}^{(\ell-1)}$ satisfy
\[
1<x_{1,k+1}^{(\ell-1)}<x_{1,k}^{(\ell)}<x_{2,k+1}^{(\ell-1)}<\cdots<x_{k,k}^{(\ell)}<x_{k+1,k+1}^{(\ell-1)}.
\]
\end{thm}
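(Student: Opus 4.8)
The plan is to establish the interlacing property in Theorem~\ref{thm:zerointer01} by exploiting the two-term recurrence relation \eqref{eq:recP01x} from Theorem~\ref{thm:huxiang}, which directly couples $P_{k}^{(\ell)}$ and $P_{k+1}^{(\ell-1)}$, together with a sign-counting (or Sturm-type) argument evaluated at the zeros. The central observation I would use is that the factor $\frac{2\ell-1}{2\ell+1}(x^2-1)$ appearing in \eqref{eq:recP01x} is strictly positive on the open interval $(1,+\infty)$, so on that interval the identity
\[
\frac{2\ell-1}{2\ell+1}(x^2-1)P_{k}^{(\ell)}=\tilde{p}_{k}^{(\ell)}(x+\tilde{q}_{k}^{(\ell)})P_{k+1}^{(\ell-1)}+\tilde{r}_{k}^{(\ell)}P_{k}^{(\ell-1)}
\]
lets me read off the sign of $P_{k}^{(\ell)}$ at any zero of $P_{k+1}^{(\ell-1)}$ purely in terms of $P_{k}^{(\ell-1)}$ there, and conversely \eqref{eq:recP10x} reads off $P_{k+1}^{(\ell-1)}$ at a zero of $P_{k}^{(\ell)}$ in terms of $P_{k-1}^{(\ell)}$. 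The classical interlacing of $P_{k-1}^{(\ell)}$, $P_{k}^{(\ell)}$, $P_{k+1}^{(\ell)}$ (and the analogous statement one superscript down) from Theorem~\ref{thm:zerointerself} would then feed the induction.

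\textbf{First} I would fix $\ell\in\mathbb{N}^{+}$ and argue by induction on $k$, the base case $k=0$ being a direct check that the single zero $x_{1,1}^{(\ell-1)}$ lies to the left of $x_{1,0}^{(\ell)}$ (the latter being vacuous, so really just $x_{1,1}^{(\ell-1)}>1$, which follows from the location of zeros guaranteed in Section~\ref{sec:orth}). \textbf{Next}, for the inductive step I would evaluate \eqref{eq:recP01x} at a consecutive pair of zeros $x_{i,k+1}^{(\ell-1)}, x_{i+1,k+1}^{(\ell-1)}$ of $P_{k+1}^{(\ell-1)}$: since the first term on the right vanishes at each such zero, the sign of $P_{k}^{(\ell)}$ at these zeros agrees (up to the fixed positive factor $\tilde{p}_{k}^{(\ell)}$ and the sign of $\tilde{r}_{k}^{(\ell)}$, which I would need to pin down) with the sign of $P_{k}^{(\ell-1)}$. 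Because consecutive zeros of $P_{k+1}^{(\ell-1)}$ sandwich exactly one zero of $P_{k}^{(\ell-1)}$ by Theorem~\ref{thm:zerointerself} applied at level $\ell-1$, the value $P_{k}^{(\ell-1)}$ changes sign across the pair, forcing $P_{k}^{(\ell)}$ to change sign as well, hence $P_{k}^{(\ell)}$ has a zero strictly between $x_{i,k+1}^{(\ell-1)}$ and $x_{i+1,k+1}^{(\ell-1)}$. Counting these intervals and using that $\deg P_{k}^{(\ell)}=k$ while $P_{k+1}^{(\ell-1)}$ has $k+1$ zeros accounts for all $k$ zeros of $P_{k}^{(\ell)}$, which yields the claimed alternation; the endpoint behaviour near $x=1$ and near $+\infty$ closes off the two outermost intervals.

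\textbf{The main obstacle} I anticipate is controlling the sign of the coefficient $\tilde{r}_{k}^{(\ell)}=p_{k}^{(\ell)}\bigl(1-\frac{2\ell+1}{2\ell-1}(\tilde{p}_{k}^{(\ell)})^{2}\bigr)$ from \eqref{eq:pqrnx}, since the whole sign-propagation argument hinges on $\tilde{r}_{k}^{(\ell)}$ not vanishing and having a definite sign independent of $i$. I would attack this by relating $\tilde{p}_{k}^{(\ell)}$ back to the positive recurrence coefficients $a_{k}^{(\ell)}$ and the leading coefficients $c_{k}^{(\ell)}$ via the identities in Theorem~\ref{thm:huxiang}, aiming to show $\frac{2\ell+1}{2\ell-1}(\tilde{p}_{k}^{(\ell)})^{2}<1$; a clean alternative that sidesteps this delicacy is to emulate the proof of Theorem~\ref{thm:zerointerself} by invoking the separation theorem for general orthogonal polynomials in \cite{SP:2011} with the pair of weights $\omega^{(\ell)}$ and $\omega^{(\ell-1)}$, whose ratio $\omega^{(\ell)}/\omega^{(\ell-1)}=\frac{2\ell-1}{2\ell+1}(x^2-1)$ is a fixed increasing positive function on $(1,+\infty)$, precisely the structural hypothesis those separation theorems require. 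I expect the second route to be both shorter and more robust, so I would present the recurrence-based computation as the concrete realization and cite the abstract separation theorem to guarantee that the sign pattern is consistent.
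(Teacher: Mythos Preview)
Your approach is essentially the paper's: evaluate the two-term recurrence \eqref{eq:recP01x} at the zeros $x_{i,k+1}^{(\ell-1)}$ of $P_{k+1}^{(\ell-1)}$, then feed in the classical interlacing of $P_{k}^{(\ell-1)}$ with $P_{k+1}^{(\ell-1)}$ from Theorem~\ref{thm:zerointerself} to force a sign change of $P_{k}^{(\ell)}$ across each consecutive pair. The counting argument you describe is exactly what the paper does.

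Where you make your life harder than necessary is the ``main obstacle''. You do \emph{not} need the sign of $\tilde{r}_{k}^{(\ell)}$, only that it is nonzero. Since $\tilde{r}_{k}^{(\ell)}$ is a single number (independent of $i$), the product
\[
\mathrm{sign}\bigl(P_{k}^{(\ell)}(x_{i,k+1}^{(\ell-1)})\bigr)\cdot\mathrm{sign}\bigl(P_{k}^{(\ell)}(x_{i+1,k+1}^{(\ell-1)})\bigr)
=\mathrm{sign}\bigl(P_{k}^{(\ell-1)}(x_{i,k+1}^{(\ell-1)})\bigr)\cdot\mathrm{sign}\bigl(P_{k}^{(\ell-1)}(x_{i+1,k+1}^{(\ell-1)})\bigr)
\]
holds regardless of whether $\tilde{r}_{k}^{(\ell)}$ is positive or negative, so long as it is nonzero. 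And nonvanishing is immediate by contradiction: if $\tilde{r}_{k}^{(\ell)}=0$, then \eqref{eq:recP01x} at each $x_{i,k+1}^{(\ell-1)}$ forces $P_{k}^{(\ell)}(x_{i,k+1}^{(\ell-1)})=0$ for all $i=1,\dots,k+1$, impossible since $\deg P_{k}^{(\ell)}=k$. This is precisely how the paper handles it; note that the positivity of $\tilde{r}_{k}^{(\ell)}$ is proved only \emph{afterwards}, in Corollary~\ref{col:sign}, \emph{using} the present theorem, so trying to establish it here would be circular. The induction on $k$ is also unnecessary: the argument works directly for each $k$. Your alternative route via a separation theorem for weights with monotone ratio is sound but not needed.
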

\begin{proof}
Substituting $\{x_{i,k+1}^{(\ell-1)}\}_{i=1}^{k+1}$ into \eqref{eq:recP01x} gives
\[
   \frac{2\ell-1}{2\ell+1}\left((x_{i,k+1}^{(\ell-1)})^2-1\right)P_{k}^{(\ell)}(x_{i,k+1}^{(\ell-1)};\zeta)=\tilde{r}_{k}^{(\ell)}P_{k}^{(\ell-1)}(x_{i,k+1}^{(\ell-1)};\zeta).
\]
which implies that $\tilde{r}_{k}^{(\ell)}\neq0$. In fact, if assuming   $\tilde{r}_{k}^{(\ell)}=0$,
then the above identity and  the fact that $(x_{i,k+1}^{(\ell-1)})^2-1>0$
imply $P_{k}^{(\ell)}(x_{i,k+1}^{(\ell-1)};\zeta)=0$,
which contradicts with $P_{k}^{(\ell)}$ being a polynomial of degree $k$.

Using Theorem \ref{thm:zerointerself} gives
\[
{\rm sign}\left(P_{k}^{(\ell)}(x_{i,k+1}^{(\ell-1)};\zeta)P_{k}^{(\ell)}(x_{i+1,k+1}^{(\ell-1)};\zeta)\right)={\rm sign}\left(P_{k}^{(\ell-1)}(x_{i,k+1}^{(\ell-1)};\zeta)P_{k}^{(\ell-1)}(x_{i+1,k+1}^{(\ell-1)};\zeta)\right)<0.
\]
Thus the number of zero of  the polynomial $P_{k}^{(\ell)}$ is not less than one in each subinterval
$(x_{i,k+1}^{(\ell-1)},x_{i+1,k+1}^{(\ell-1)})$.
The proof is completed.
\qed\end{proof}

According to Theorems \ref{thm:zerointerself} and \ref{thm:zerointer01}, one can
know  the sign of the coefficients of the recurrence relations in Theorem \ref{thm:huxiang}.
\begin{corollary}
\label{col:sign}
All quantities $p_k, q_k, r_k$ in \eqref{eq:pqrn} and
$\tilde{p}_k, \tilde{q}_k, \tilde{r}_k$ in
\eqref{eq:pqrnx} are positive.
\end{corollary}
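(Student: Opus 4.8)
The plan is to split the six quantities into three tiers according to how much structure each one needs, handling the purely algebraic ones first and saving the two that genuinely require the zero-separation theorems for last.

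First I would dispose of $p_k^{(\ell)}$, $r_k^{(\ell)}$ and $\tilde p_k^{(\ell)}$, which are positive for trivial reasons: by the convention $c_k^{(\ell)}>0$ each of them is a ratio of leading coefficients, multiplied in the latter two cases by the factor $\frac{2\ell-1}{2\ell+1}$, and since $\ell\in\mathbb{N}^{+}$ gives $2\ell-1\geq 1>0$ this factor is strictly positive. No separation property is needed here.

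Next I would treat $\tilde q_k^{(\ell)}$ and $q_k^{(\ell)}$, which are the delicate ones because they are differences of sums of zeros having different lengths, so that a naive term-by-term comparison actually points the wrong way. The key is to use the shifted half of the interlacing in Theorem \ref{thm:zerointer01}, namely $x_{i,k}^{(\ell)}<x_{i+1,k+1}^{(\ell-1)}$, rather than the unshifted inequality $x_{i,k+1}^{(\ell-1)}<x_{i,k}^{(\ell)}$. Summing the shifted inequality over $i=1,\dots,k$ telescopes the right-hand index up by one and gives $\sum_{i=1}^{k}x_{i,k}^{(\ell)}<\sum_{j=2}^{k+1}x_{j,k+1}^{(\ell-1)}<\sum_{j=1}^{k+1}x_{j,k+1}^{(\ell-1)}$, where the last step only discards the positive zero $x_{1,k+1}^{(\ell-1)}>1$. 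This is precisely $\tilde q_k^{(\ell)}>0$ by its definition in \eqref{eq:pqrnx}. For $q_k^{(\ell)}$ I would use its first representation in \eqref{eq:pqrn}; applying the same shift argument to the self-separation of Theorem \ref{thm:zerointerself} at level $\ell-1$ yields $\sum_{i=1}^{k+1}x_{i,k+1}^{(\ell-1)}<\sum_{j=1}^{k+2}x_{j,k+2}^{(\ell-1)}$, and chaining this with the inequality just obtained gives $\sum_{i=1}^{k}x_{i,k}^{(\ell)}<\sum_{i=1}^{k+2}x_{i,k+2}^{(\ell-1)}$, so the bracket defining $q_k^{(\ell)}$ is positive and the positive prefactor finishes the argument.

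Finally I would establish $\tilde r_k^{(\ell)}>0$ by reusing the pointwise identity already derived in the proof of Theorem \ref{thm:zerointer01}, obtained by evaluating the recurrence \eqref{eq:recP01x} at the zeros of $P_{k+1}^{(\ell-1)}$. Taking the largest such zero $x_{k+1,k+1}^{(\ell-1)}$, I note that it lies to the right of every zero of both $P_k^{(\ell)}$ and $P_k^{(\ell-1)}$ --- the former by Theorem \ref{thm:zerointer01} and the latter by Theorem \ref{thm:zerointerself} at level $\ell-1$ --- so that $P_k^{(\ell)}(x_{k+1,k+1}^{(\ell-1)};\zeta)>0$ and $P_k^{(\ell-1)}(x_{k+1,k+1}^{(\ell-1)};\zeta)>0$ by positivity of the leading coefficients. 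Since $(x_{k+1,k+1}^{(\ell-1)})^2-1>0$ and $\frac{2\ell-1}{2\ell+1}>0$, the left-hand side of that identity is strictly positive, forcing $\tilde r_k^{(\ell)}>0$. I expect the main obstacle to be exactly the $q$-type quantities: the temptation is to compare zeros index-by-index, but because the two sums contain different numbers of terms one must instead exploit the shifted interlacing and telescoping so that the extra positive zeros are correctly accounted for, and only then does the difference come out with the right sign.
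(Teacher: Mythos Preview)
Your argument is correct. For $p_k^{(\ell)}$, $r_k^{(\ell)}$, $\tilde p_k^{(\ell)}$ and $\tilde q_k^{(\ell)}$ it coincides with the paper's proof. For $q_k^{(\ell)}$ the paper instead rewrites $q_k^{(\ell)}=\tilde p_k^{(\ell)}\bigl(b_{k+1}^{(\ell-1)}+\tilde q_k^{(\ell)}\bigr)$ and invokes Theorem~\ref{thm:rec} for $b_{k+1}^{(\ell-1)}>0$ together with the already established $\tilde q_k^{(\ell)}>0$; your chaining of Theorems~\ref{thm:zerointerself} and~\ref{thm:zerointer01} reaches the same conclusion with one extra interlacing step instead of an algebraic identity. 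The real difference is in $\tilde r_k^{(\ell)}$: the paper matches the degree-$k$ coefficients on both sides of \eqref{eq:recP01x}, obtains an explicit expression for $\tilde r_k^{(\ell)}$ as a combination of elementary symmetric functions of the zeros, and then reorganises that combination into manifestly positive pieces via Theorem~\ref{thm:zerointer01}. Your approach---evaluating \eqref{eq:recP01x} at the largest zero $x_{k+1,k+1}^{(\ell-1)}$ and reading off the sign of each surviving factor---is shorter and avoids the symmetric-function bookkeeping, at the cost of not producing a closed formula for $\tilde r_k^{(\ell)}$. Both are valid; the paper's route yields more quantitative information, while yours is cleaner if only the sign is needed.
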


\begin{proof}
It is obvious that
\[
p_{k}^{(\ell)}=\frac{c_{k}^{(\ell-1)}}{c_{k}^{(\ell)}}>0,\quad r_{k}^{(\ell)}=\frac{2\ell-1}{2\ell+1}\frac{c_{k-1}^{(\ell)}}{c_{k+1}^{(\ell-1)}}>0,\quad
\tilde{p}_{k}^{(\ell)}=\frac{2\ell-1}{2\ell+1}\frac{c_{k}^{(\ell)}}{c_{k+1}^{(\ell-1)}}>0.
\]
Using  Theorems \ref{thm:rec} and \ref{thm:zerointer01} gives
{\begin{align*}
\tilde{q}_{k}^{(\ell)}&=\sum_{i=1}^{k+1}x_{i,k+1}^{(\ell-1)}-\sum_{i=1}^{k}x_{i,k}^{(\ell)}=\sum_{i=1}^{k}\left(x_{i+1,k+1}^{(\ell-1)}-x_{i,k}^{(\ell)}\right)
+x_{1,k+1}^{(\ell-1)}>0,\\
q_{k}^{(\ell)}&=\tilde{p}_{k}^{(\ell)}\left(\sum_{i=1}^{k+2}x_{i,k+2}^{(\ell-1)}-\sum_{i=1}^{k}x_{i,k}^{(\ell)}\right)=\tilde{p}_{k}^{(\ell)}\left(b_{k+1}^{(\ell-1)}+\tilde{q}_{k}^{(\ell)}\right)>0,
\end{align*}}
which imply $q_{k}^{(\ell)}>0$ and $\tilde{q}_{k}^{(\ell)}>0$.

Comparing the coefficients of the term of order $n$
at two sides of \eqref{eq:recP01x} gives
\begin{align*}
\tilde{r}_{k}=&\frac{2\ell-1}{2\ell+1}(p_{k}^{(\ell)})^{-1}
\left(\sum_{i=1}^{k}\sum_{j=i+1}^{k}x_{i,k}^{(\ell)}x_{j,k}^{(\ell)}-1-
    \sum_{i=1}^{k+1}\sum_{j=i+1}^{k+1}x_{i,k+1}^{(\ell-1)}x_{j,k+1}^{(\ell-1)}+
    \left(\sum_{i=1}^{k+1}x_{i,k+1}^{(\ell-1)}-\sum_{i=1}^{k}x_{i,k}^{(\ell)}\right)\sum_{i=1}^{k+1}x_{i,k+1}^{(\ell-1)}\right)\\
=& \frac{2\ell-1}{2\ell+1}(p_{k}^{(\ell)})^{-1}
\left(\sum_{i=1}^{k}\sum_{j=i+1}^{k}x_{i,k}^{(\ell)}x_{j,k}^{(\ell)}+\sum_{i=1}^{k+1}\sum_{j=i}^{k+1}x_{i,k+1}^{(\ell-1)}x_{j,k+1}^{(\ell-1)}
    -\sum_{i=1}^{k}x_{i,k}^{(\ell)}\sum_{i=1}^{k+1}x_{i,k+1}^{(\ell-1)}-1\right)\\
=&\frac{2\ell-1}{2\ell+1}(p_{k}^{(\ell)})^{-1}
\left(\sum_{i=1}^{k}x_{i+1,k+1}^{(\ell-1)}(x_{i+1,k+1}^{(\ell-1)}-x_{i,k}^{(\ell)})+(x_{1,k+1}^{(\ell-1)})^{2}-1\right.\\   &\left.+\sum_{i=0}^{k}\sum_{j=i+1}^{k}(x_{i+1,k+1}^{(\ell-1)}-x_{i,k}^{(\ell)})(x_{j+1,k+1}^{(\ell-1)}-x_{j,k}^{(\ell)})\right),
\end{align*}
where $x_{0,k}^{(\ell)}=0$.

Using Theorem \ref{thm:zerointer01} gives $\tilde{r}_{k}^{(\ell)}>0$.
The proof is completed.
\qed\end{proof}

Using  Corollary \ref{col:sign}, $\tilde{r}_{k}^{(\ell)}={p_{k}^{(\ell)}(1-\frac{2\ell+1}{2\ell-1}(\tilde{p}_{k}^{(\ell)})^{2})}$, and $\tilde{p}_{k}^{(\ell)}=\frac{2\ell-1}{2\ell+1}(c_{k+1}^{(\ell-1)})^{-1}c_{k}^{(\ell)}$
can give the following corollary.
\begin{corollary}
\label{corollary3.2}
	The leading coefficient of  $P_{k+1}^{(\ell-1)}$ is
	$\sqrt{\frac{2\ell+1}{2\ell-1}}$ times larger than that of $P_{k}^{(\ell)}$, i.e. $c_{k+1}^{(\ell-1)}>\sqrt{\frac{2\ell+1}{2\ell-1}}c_{k}^{(\ell)}$.
\end{corollary}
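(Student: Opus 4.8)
The plan is to obtain the corollary as a purely algebraic consequence of the sign information already collected in Corollary \ref{col:sign}, so that no new orthogonality computation or integral estimate is required. The single fact I would exploit is the strict positivity of the cross-family coefficient $\tilde r_k^{(\ell)}$, read together with the two closed forms recorded in \eqref{eq:pqrnx} for $\tilde r_k^{(\ell)}$ and $\tilde p_k^{(\ell)}$.

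First I would recall from Corollary \ref{col:sign} that $p_k^{(\ell)}>0$ and $\tilde r_k^{(\ell)}>0$, and rewrite $\tilde r_k^{(\ell)}$ in the factored form
\[
\tilde r_k^{(\ell)}=p_k^{(\ell)}\left(1-\frac{2\ell+1}{2\ell-1}\bigl(\tilde p_k^{(\ell)}\bigr)^2\right)
\]
supplied by \eqref{eq:pqrnx}. Dividing the strict inequality $\tilde r_k^{(\ell)}>0$ by the positive factor $p_k^{(\ell)}$ forces the parenthesised term to be strictly positive, which is precisely the bound
\[
\bigl(\tilde p_k^{(\ell)}\bigr)^2<\frac{2\ell-1}{2\ell+1}
\]
on the square of the coupling coefficient $\tilde p_k^{(\ell)}$.

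Next I would insert the explicit value $\tilde p_k^{(\ell)}=\frac{2\ell-1}{2\ell+1}\,(c_{k+1}^{(\ell-1)})^{-1}c_k^{(\ell)}$ from \eqref{eq:pqrnx} into this estimate. Because every leading coefficient is positive under the normalisation $c_k^{(\ell)}>0$, I may clear the common factor $\frac{2\ell-1}{2\ell+1}$ and take positive square roots; handling the remaining power of $\frac{2\ell-1}{2\ell+1}$ then produces the constant $\sqrt{\tfrac{2\ell+1}{2\ell-1}}$ and delivers the asserted comparison
\[
c_{k+1}^{(\ell-1)}>\sqrt{\frac{2\ell+1}{2\ell-1}}\,c_k^{(\ell)},
\]
which is the statement of the corollary.

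I do not expect any genuine obstacle in the corollary itself: each step is a one-line manipulation of a single positivity statement, and the only care required is bookkeeping of the factor $\frac{2\ell-1}{2\ell+1}$ as it passes through the squaring and the square root. The substantive work lies entirely upstream, in establishing $\tilde r_k^{(\ell)}>0$; that positivity is the content of Corollary \ref{col:sign} and ultimately rests on the interlacing of the zeros of $P_k^{(\ell)}$ and $P_{k+1}^{(\ell-1)}$ proved in Theorem \ref{thm:zerointer01}. Once that interlacing is available the comparison of leading coefficients follows automatically, which is exactly why the result is phrased as a corollary rather than an independent theorem.
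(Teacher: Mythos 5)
Your strategy coincides with the paper's own proof: the paper obtains this corollary in one line from Corollary \ref{col:sign} together with the closed forms $\tilde{r}_{k}^{(\ell)}=p_{k}^{(\ell)}\bigl(1-\frac{2\ell+1}{2\ell-1}(\tilde{p}_{k}^{(\ell)})^{2}\bigr)$ and $\tilde{p}_{k}^{(\ell)}=\frac{2\ell-1}{2\ell+1}(c_{k+1}^{(\ell-1)})^{-1}c_{k}^{(\ell)}$, exactly as you propose, and your first two steps (positivity of $p_{k}^{(\ell)}$ and $\tilde{r}_{k}^{(\ell)}$, hence $(\tilde{p}_{k}^{(\ell)})^{2}<\frac{2\ell-1}{2\ell+1}$) are correct. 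The gap is in the last step, the one you dismiss as ``bookkeeping of the factor''. Write $A=\frac{2\ell-1}{2\ell+1}$ and $R=c_{k}^{(\ell)}/c_{k+1}^{(\ell-1)}$, so that $\tilde{p}_{k}^{(\ell)}=AR$. The bound $(\tilde{p}_{k}^{(\ell)})^{2}<A$ reads $A^{2}R^{2}<A$, i.e. $R^{2}<A^{-1}$, i.e. $R<A^{-1/2}$, which unwinds to
\[
c_{k+1}^{(\ell-1)}>\sqrt{\frac{2\ell-1}{2\ell+1}}\,c_{k}^{(\ell)},
\]
a constant \emph{smaller} than one, and not the asserted $c_{k+1}^{(\ell-1)}>\sqrt{\frac{2\ell+1}{2\ell-1}}\,c_{k}^{(\ell)}$. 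Your final sentence silently replaces $A^{1/2}$ by $A^{-1/2}$.

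This is not a repairable slip: the inequality as stated (which is also what the paper prints, so the inversion appears to originate there) is false in general. Take $\ell=1$, $k=0$. By \eqref{EQ-3.3aaaaaaa}, $c_{0}^{(1)}=\sqrt{\zeta}$ and $(c_{1}^{(0)})^{2}=\frac{G-4\zeta^{-1}}{G^{2}-5\zeta^{-1}G+4\zeta^{-2}-1}$. Using \eqref{eq:besselrec} in the form $K_{3}=K_{1}+4\zeta^{-1}K_{2}$ one finds $G-4\zeta^{-1}=K_{1}/K_{2}$, $G-\zeta^{-1}=K_{1}/K_{2}+3\zeta^{-1}$, and hence
\[
\frac{(c_{1}^{(0)})^{2}}{(c_{0}^{(1)})^{2}}
=\Bigl(3+\zeta\frac{K_{1}}{K_{2}}-\zeta\frac{K_{2}}{K_{1}}\Bigr)^{-1}
\longrightarrow 1
\quad\text{as }\zeta\to 0,
\]
since $\zeta K_{2}/K_{1}\to 2$ and $\zeta K_{1}/K_{2}\to 0$ in that limit. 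Thus $c_{1}^{(0)}>\sqrt{3}\,c_{0}^{(1)}$ fails for small $\zeta$, whereas the corrected bound $c_{1}^{(0)}>c_{0}^{(1)}/\sqrt{3}$ holds for all $\zeta$ (it reduces to $K_{1}<K_{2}$). The statement your argument --- and the paper's --- actually establishes is $c_{k}^{(\ell)}<\sqrt{\frac{2\ell+1}{2\ell-1}}\,c_{k+1}^{(\ell-1)}$, i.e. the corollary with the constant moved to the other side of the inequality; a faithful write-up should prove that version and flag the printed one as a typo rather than claim to derive it.
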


According to Theorems \ref{thm:derivezeta} and \ref{thm:zerointerself},
one has   the following further conclusion.
\begin{corollary}
\label{col:varx}
The zeros $\{x_{i,k}^{(\ell)}\}_{i=1}^{k}$ of $P_{k}^{(\ell)}$ strictly decrease   with respect to
$\zeta$, i.e.
\[
    \frac{\partial x_{i,k}^{(\ell)}}{\partial \zeta}<0.
\]
\end{corollary}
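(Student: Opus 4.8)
The plan is to treat each zero as an implicit function $x_{i,k}^{(\ell)}=x_{i,k}^{(\ell)}(\zeta)$ of the parameter and differentiate the defining identity $P_{k}^{(\ell)}(x_{i,k}^{(\ell)}(\zeta);\zeta)=0$. First I would apply the chain rule to obtain
\[
\frac{\partial P_{k}^{(\ell)}}{\partial x}\left(x_{i,k}^{(\ell)};\zeta\right)\frac{\partial x_{i,k}^{(\ell)}}{\partial \zeta}
+\frac{\partial P_{k}^{(\ell)}}{\partial \zeta}\left(x_{i,k}^{(\ell)};\zeta\right)=0,
\]
so that $\frac{\partial x_{i,k}^{(\ell)}}{\partial \zeta}$ equals minus the ratio of the $\zeta$-derivative to the $x$-derivative, both evaluated at the zero. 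The whole proof then reduces to a sign comparison of these two quantities.

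Next I would simplify the numerator using Theorem \ref{thm:derivezeta} with $k+1$ replaced by $k$, which reads $\frac{\partial P_{k}^{(\ell)}}{\partial \zeta}=a_{k-1}^{(\ell)}P_{k-1}^{(\ell)}-\frac{1}{2}\left(G(\zeta)-\zeta^{-1}-b_{k}^{(\ell)}\right)P_{k}^{(\ell)}$. Since $P_{k}^{(\ell)}(x_{i,k}^{(\ell)};\zeta)=0$, the term proportional to $P_{k}^{(\ell)}$ drops out and the numerator collapses to
\[
\frac{\partial P_{k}^{(\ell)}}{\partial \zeta}\left(x_{i,k}^{(\ell)};\zeta\right)=a_{k-1}^{(\ell)}P_{k-1}^{(\ell)}\left(x_{i,k}^{(\ell)};\zeta\right),
\]
where $a_{k-1}^{(\ell)}>0$ by Theorem \ref{thm:rec}. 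Thus it remains only to compare the signs of $P_{k-1}^{(\ell)}(x_{i,k}^{(\ell)};\zeta)$ and $\frac{\partial P_{k}^{(\ell)}}{\partial x}(x_{i,k}^{(\ell)};\zeta)$.

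For the denominator I would differentiate the factored form \eqref{eq:Preprezero}, giving $\frac{\partial P_{k}^{(\ell)}}{\partial x}(x_{i,k}^{(\ell)};\zeta)=c_{k}^{(\ell)}\prod_{j\ne i}(x_{i,k}^{(\ell)}-x_{j,k}^{(\ell)})$, whose sign is $(-1)^{k-i}$ because exactly the $k-i$ factors with $j>i$ are negative (and $c_{k}^{(\ell)}>0$). For the numerator I would invoke the interlacing in Theorem \ref{thm:zerointerself} after relabeling $k\to k-1$, which yields $x_{i-1,k-1}^{(\ell)}<x_{i,k}^{(\ell)}<x_{i,k-1}^{(\ell)}$ and hence places exactly $i-1$ zeros of $P_{k-1}^{(\ell)}$ below $x_{i,k}^{(\ell)}$ and $k-i$ above it; evaluating the factored form of $P_{k-1}^{(\ell)}$ at $x_{i,k}^{(\ell)}$ then produces $k-i$ negative factors, so $P_{k-1}^{(\ell)}(x_{i,k}^{(\ell)};\zeta)$ also has sign $(-1)^{k-i}$. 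The two matching signs cancel in the ratio, leaving $\frac{\partial x_{i,k}^{(\ell)}}{\partial \zeta}<0$, uniformly in $i=1,\dots,k$.

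The only delicate point—more bookkeeping than genuine obstacle—is ensuring the interlacing count is correct at the endpoints $i=1$ and $i=k$; the uniform formula $(-1)^{k-i}$ handles these without a separate argument, so the crux is simply the careful application of Theorem \ref{thm:zerointerself} to line up the sign patterns of $P_{k-1}^{(\ell)}$ and $\partial_x P_{k}^{(\ell)}$ at each zero.
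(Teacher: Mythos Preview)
Your proof is correct and follows essentially the same route as the paper: implicit differentiation of $P_k^{(\ell)}(x_{i,k}^{(\ell)}(\zeta);\zeta)=0$, simplification of the $\zeta$-derivative via Theorem~\ref{thm:derivezeta} to $a_{k-1}^{(\ell)}P_{k-1}^{(\ell)}(x_{i,k}^{(\ell)};\zeta)$, and then matching the signs of $P_{k-1}^{(\ell)}$ and $\partial_x P_k^{(\ell)}$ at the zero using the interlacing of Theorem~\ref{thm:zerointerself}. The paper records the common sign as $(-1)^{k+i}$ rather than your $(-1)^{k-i}$, but these are of course the same, and your explicit use of the factored form \eqref{eq:Preprezero} makes the sign count slightly more transparent than the paper's terse citation of the interlacing theorem.
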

\begin{proof}
Taking partial derivative of $P_{k}^{(\ell)}(x_{i,k}^{(\ell)};\zeta)$ with respect to $\zeta$
and using  Theorem \ref{thm:derivezeta}
gives
\[
    \frac{\partial x_{i,k}^{(\ell)}}{\partial \zeta}=-\left(\frac{\partial P_{k}^{(\ell)}}{\partial x}(x_{i,k}^{(\ell)};\zeta)\right)^{-1}
    \left(\frac{\partial P_{k}^{(\ell)}}{\partial \zeta}(x_{i,k}^{(\ell)};\zeta)\right)=-a_{k-1}^{(\ell)}\left(\frac{\partial P_{k}^{(\ell)}}{\partial x}(x_{i,k}^{(\ell)};\zeta)\right)^{-1}P_{k-1}^{(\ell)}(x_{i,k}^{(\ell)};\zeta).
\]
Due to Theorem \ref{thm:zerointerself}, one has
\[
{\rm sign}(P_{k-1}^{(\ell)}(x_{i,k}^{(\ell)};\zeta))=(-1)^{k+i}={\rm sign}\left(\frac{\partial P_{k}^{(\ell)}}{\partial x}(x_{i,k}^{(\ell)};\zeta)\right).
\]
Combining them  completes the proof.
\qed\end{proof}

\section{{Some properties} of real spherical harmonics}
\label{sec:SH}
The real spherical harmonics are known as tesseral spherical harmonics \cite{sh:1927}. The harmonics with $m > 0$ are said to be of cosine type, and those with $m < 0$ are of sine type. The reason for that can be seen by writing the spherical harmonic functions in terms of the associated Legendre polynomials
\begin{equation}
\label{eq:sh}
Y_{\ell,m}(y,\phi)=
  \begin{cases}
    \sqrt{2}\check{P}_{\ell}^{|m|}(y)\sin(|m|\phi), & m<0, \\
    \check{P}_{\ell}^{m}(y), & m=0, \\
    \sqrt{2}\check{P}_{\ell}^{m}(y)\cos(m\phi), & m>0,
  \end{cases}
\end{equation}
where
\[
\check{P}_{\ell}^{m}(y)=\sqrt{\frac{(\ell-m)!}{(\ell+m)!}}\hat{P}_{\ell}^{m}(y),\quad |m|\leq\ell,
\ \ \ell\in\mathbb{N},
\]
and
$\{\hat{P}_{\ell}^{m}(y),|m|\leq\ell,\ell\in\mathbb{N}\}$ are the associated Legendre polynomials defined by
\begin{equation}
\label{eq:alp}
\hat{P}_{\ell}^{m}(y)=(-1)^{m}(1-y^2)^{\frac{m}{2}}\frac{1}{2^{\ell}\ell!}\frac{d^{\ell+m}}{dy^{\ell+m}}(y^2-1)^{\ell}.
\end{equation}

Moreover, it holds that
$\check{P}_{\ell}^{m}(y)=0$ for $|m|>\ell$; $\check{P}_{\ell}^{-m}(y)=(-1)^{m}\check{P}_{\ell}^{m}(y)$, and $\check{P}_{-1}^{m}(y)=0$.

Thus   the following orthogonality properties hold
\begin{equation}
\label{eq:orthsh}
\int_{-1}^{1}\int_{0}^{2\pi}Y_{\ell,m}Y_{\ell',m'} d\phi dy =\frac{4\pi}{2\ell+1}\delta_{\ell,\ell'}\delta_{m,m'}.
\end{equation}

\subsection{Recurrence relations and Partial derivatives}
\label{subsec:recurrsh}

\begin{lemma}[\cite{handbook:331-339 1974}]
\label{thm:recderalp}
The associated Legendre polynomials $\{\hat{P}_{\ell}^{m}(y)\}$ have the following properties:
\begin{align}
&y\hat{P}_{\ell}^{m}=\frac{1}{2\ell+1}\left((\ell-m+1)\hat{P}_{\ell+1}^{m}+(\ell+m)\hat{P}_{\ell-1}^{m}\right),\label{eq:rec1}\\
&y(1-y^2)^{-\frac{1}{2}}\hat{P}_{\ell}^{m}=-\frac{1}{2m}\left(\hat{P}_{\ell}^{m+1}+(\ell+m)(\ell-m+1)\hat{P}_{\ell}^{m-1}\right),\label{eq:rec2}\\
&(1-y^2)^{-\frac{1}{2}}\hat{P}_{\ell}^{m}=-\frac{1}{2m}\left(\hat{P}_{\ell-1}^{m+1}+(\ell+m-1)(\ell+m)\hat{P}_{\ell-1}^{m-1}\right),\label{eq:rec3}\\
&(1-y^2)^{\frac{1}{2}}\hat{P}_{\ell}^{m}=\frac{1}{2\ell+1}\left((\ell-m+1)(\ell-m+2)\hat{P}_{\ell+1}^{m-1}-(\ell+m-1)(\ell+m)\hat{P}_{\ell-1}^{m-1}\right),\label{eq:rec4}\\
&(1-y^2)^{\frac{1}{2}}\hat{P}_{\ell}^{m}=\frac{1}{2\ell+1}\left(-\hat{P}_{\ell+1}^{m+1}+\hat{P}_{\ell-1}^{m+1}\right),\label{eq:rec5}\\
&(1-y^2)^{\frac{1}{2}}\hat{P}_{\ell}^{m+1}=(\ell-m+1)\hat{P}_{\ell+1}^{m}-(\ell+m+1)y\hat{P}_{\ell}^{m},\label{eq:rec6}\\
&(y^2-1)\frac{d}{dy}\hat{P}_{\ell}^{m}=\ell y\hat{P}_{\ell}^{m}-(\ell+m)\hat{P}_{\ell-1}^{m},\label{eq:der1}\\
&(y^2-1)\frac{d}{dy}\hat{P}_{\ell}^{m}=\sqrt{1-y^2}\hat{P}_{\ell}^{m+1}+my\hat{P}_{\ell}^{m}.\label{eq:der2}
\end{align}
\end{lemma}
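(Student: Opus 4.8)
The plan is to reduce every one of the eight identities to facts about the ordinary Legendre polynomials $P_\ell(y)=\frac{1}{2^\ell\ell!}\frac{d^\ell}{dy^\ell}(y^2-1)^\ell$ by means of the connection formula
\[
\hat{P}_\ell^m(y)=(-1)^m(1-y^2)^{m/2}P_\ell^{(m)}(y),
\]
where $P_\ell^{(m)}$ denotes the $m$-th derivative; this is immediate from the Rodrigues representation \eqref{eq:alp}. The raw material is then threefold: the Legendre three-term recurrence $(2\ell+1)yP_\ell=(\ell+1)P_{\ell+1}+\ell P_{\ell-1}$, the Legendre derivative recurrences such as $(1-y^2)P_\ell'=\ell P_{\ell-1}-\ell yP_\ell$ and $(2\ell+1)P_\ell=P_{\ell+1}'-P_{\ell-1}'$ (all obtainable from the generating function $(1-2yt+t^2)^{-1/2}=\sum_\ell P_\ell(y)t^\ell$ by differentiating in $t$ and $y$), and a first-order ladder relation in $m$ obtained from the connection formula itself. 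The strategy is to differentiate these Legendre relations $m$ times, dress them with the prefactor $(-1)^m(1-y^2)^{m/2}$, and then eliminate the "off-order" terms that appear so as to land on the clean combinations \eqref{eq:rec1}--\eqref{eq:der2}.

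I would establish the derivative formula \eqref{eq:der2} first, as it is the single cleanest primitive. Differentiating the connection formula once gives
\[
\frac{d\hat{P}_\ell^m}{dy}=-\frac{my}{1-y^2}\hat{P}_\ell^m+(-1)^m(1-y^2)^{m/2}P_\ell^{(m+1)},
\]
and since $(-1)^m(1-y^2)^{m/2}P_\ell^{(m+1)}=-(1-y^2)^{-1/2}\hat{P}_\ell^{m+1}$, multiplying through by $(y^2-1)$ yields exactly \eqref{eq:der2}. This relation is the $m$-raising ladder operator and is the lever for all the order-shifting identities.

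Next I would produce the purely algebraic three-term relations. Differentiating the Legendre three-term recurrence $m$ times by the Leibniz rule, using $(yf)^{(m)}=yf^{(m)}+mf^{(m-1)}$, and dressing with the prefactor gives a relation among $y\hat{P}_\ell^m$, $\hat{P}_{\ell\pm1}^m$ and a spurious $\hat{P}_\ell^{m-1}$ term; eliminating the latter against \eqref{eq:der2} (and its index shifts) produces the $m$-dependent coefficients $(\ell-m+1),(\ell+m)$ and hence \eqref{eq:rec1}. The derivative formula \eqref{eq:der1} follows by differentiating the Legendre relation $(y^2-1)P_\ell'=\ell yP_\ell-\ell P_{\ell-1}$ and again clearing the cross-order terms. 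The remaining order-shifting relations \eqref{eq:rec2}, \eqref{eq:rec3}, \eqref{eq:rec5}, \eqref{eq:rec6}, and \eqref{eq:rec4} then fall out by pure linear elimination: combining \eqref{eq:der1} with \eqref{eq:der2} eliminates the derivative and gives a relation for $\sqrt{1-y^2}\,\hat{P}_\ell^{m+1}$, which \eqref{eq:rec1} interconverts into the stated form \eqref{eq:rec6}, and feeding \eqref{eq:rec6} and \eqref{eq:rec1} into one another delivers the rest.

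The main obstacle will be the consistent handling of the half-integer powers $(1-y^2)^{m/2}$ whenever the order $m$ changes. Every Leibniz expansion of a Legendre recurrence simultaneously produces terms of orders $m-1$, $m$, and $m+1$ (because $(yf)^{(m)}$ and $((y^2-1)f)^{(m)}$ drop the derivative count), and each is dressed with a different power of $(1-y^2)^{1/2}$; regrouping these into clean $\hat{P}_\ell^{m\pm1}$ combinations while keeping the $(\ell\pm m)$-type coefficients correct is the heavy, error-prone core of the argument. Because the eight identities are strongly interdependent, one must also choose a good primitive set — here \eqref{eq:der2}, \eqref{eq:der1}, and \eqref{eq:rec1} — and verify that the remaining relations are derived consistently, while respecting the boundary conventions $\hat{P}_\ell^m\equiv0$ for $|m|>\ell$ and $\hat{P}_{-1}^m\equiv0$ so that the shifted indices remain meaningful. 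No single step is conceptually hard, but this bookkeeping is where all the real work lies.
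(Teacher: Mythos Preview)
The paper does not prove this lemma at all: it is stated with a citation to Abramowitz's handbook and is used as a known input. Your proposal is therefore not a comparison case but an original derivation where the paper simply quotes the literature.

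That said, your outline is a correct and standard route to these identities. The derivation of \eqref{eq:der2} from the connection formula is clean and accurate as you wrote it, and choosing \eqref{eq:der2}, \eqref{eq:der1}, \eqref{eq:rec1} as a primitive set from which the remaining five are obtained by linear elimination is a sound organization; these eight relations are indeed an over-determined list generated by a three-dimensional space of identities (one $\ell$-shift, one $m$-shift, one derivative). The only place to be careful is exactly the one you flag: when you differentiate the Legendre three-term recurrence $m$ times and dress with $(-1)^m(1-y^2)^{m/2}$, the Leibniz term $m\,P_\ell^{(m-1)}$ appears with the ``wrong'' half-power, and you must use \eqref{eq:der2} at order $m-1$ (not $m$) to convert it back into $\hat P$'s of integer order before the coefficients $(\ell\mp m+1),(\ell\pm m)$ emerge. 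Once that bookkeeping is done, the rest is algebra.
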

Using Lemma \ref{thm:recderalp}
can give the following recurrence relations and partial derivatives of
the real spherical harmonics.

\begin{thm}
\label{lem:recder}
\begin{itemize}
  \item[(1)] The real spherical harmonics satisfy the recurrence relations
\begin{align}
I_{0}Y_{\ell,m}=&\frac{1}{2\ell+1}\left(h_{\ell+1,m}Y_{\ell+1,m}+h_{\ell,m}Y_{\ell-1,m}\right),\label{eq:recc0}\\
\nonumber I_{1}Y_{\ell,m}=&\frac{1}{2(2\ell+1)}
\left(\check{s}_{m}(\tilde{h}_{\ell+1,-m}Y_{\ell+1,m-1}-\tilde{h}_{\ell-1,m}Y_{\ell-1,m-1})\right.\\
&-\left.\hat{s}_{m}(\tilde{h}_{\ell+1,m}Y_{\ell+1,m+1}-\tilde{h}_{\ell-1,-m}Y_{\ell-1,m+1})\right),\label{eq:recc1}\\
\nonumber I_{2}Y_{\ell,m}=&\frac{1}{2(2\ell+1)}
\left(s_{m}(\tilde{h}_{\ell-1,-m}Y_{\ell-1,-m-1}-\tilde{h}_{\ell+1,m}Y_{\ell+1,-m-1})\right.\\
&+\left.\tilde{s}_{m}(\tilde{h}_{\ell-1,m}Y_{\ell-1,-m+1}-\tilde{h}_{\ell+1,-m}Y_{\ell+1,-m+1})\right), \label{eq:recc2}
\end{align}
where
$I_{0}(y,\phi):=y$,
$I_{1}(y,\phi):=\sqrt{1-y^2}\cos\phi$,
$I_{2}(y,\phi):=\sqrt{1-y^2}\sin\phi$,
$
h_{\ell,m}:=\sqrt{(\ell+m)(\ell-m)}$,
$\tilde{h}_{\ell-1,m}:=\sqrt{(\ell+m-1)(\ell+m)}$,
and
\begin{align*}
&s_{m:}={\rm {sign}(m)}\sqrt{\delta_{m,-1}+\delta_{m,0}+1},\quad \tilde{s}_{m}:={\rm {sign}(m)}(1-\delta_{m,0})(1-\delta_{m,1}),\\
&\hat{s}_{m}:={\rm {sign}(m)}(1-\delta_{{m},-1})\sqrt{\delta_{{m},0}+1},\quad \check{s}_{m}:={\rm {sign}(m)}(1-\delta_{m,0})\sqrt{\delta_{{m},1}+1},
\end{align*}
with
\[
{\rm sign}(m)=
               \begin{cases}
                 1, & m\geq0, \\
                 -1, & m<0.
               \end{cases}
\]

  \item[(2)]  The partial derivatives of  real spherical harmonics
 satisfy
\begin{align}
\tilde{I}_{0}\frac{d}{dy}Y_{\ell,m}-\ell I_{0}Y_{\ell,m}&=-h_{\ell,m}Y_{\ell-1,m},\label{eq:reccc0}\\
\tilde{I}_{0}\frac{d}{dy}Y_{\ell,m}+(\ell+1)I_{0}Y_{\ell,m}&=h_{\ell+1,m}Y_{\ell+1,m},\label{eq:reccc01}
\end{align}
\begin{align}
\tilde{I}_{1}\frac{d}{dy}& Y_{\ell,m}-\ell I_{1}Y_{\ell,m}-m\hat{I}_{1}Y_{\ell,-m}
=\frac{1}{2}\left(\check{s}_{m}\tilde{h}_{\ell-1,m}Y_{\ell-1,m-1}\right.
-\left.\hat{s}_{m}\tilde{h}_{\ell-1,-m}Y_{\ell-1,m+1}\right),
\label{eq:reccc1}\\
\tilde{I}_{2}\frac{d}{dy}& Y_{\ell,m}-\ell I_{2}Y_{\ell,m}-m\hat{I}_{2}Y_{\ell,-m}
 =-\frac{1}{2}\left(\tilde{s}_{m}\tilde{h}_{\ell-1,m}Y_{\ell-1,-m+1}\right.
+\left.s_{m}\tilde{h}_{\ell-1,-m}Y_{\ell-1,-m-1}\right),\label{eq:reccc2}\\
\tilde{I}_{1}\frac{d}{dy} & Y_{\ell,m}+(\ell+1)I_{1}Y_{\ell,m}-m\hat{I}_{1}Y_{\ell,-m}
=\frac{1}{2}\left(\check{s}_{m}\tilde{h}_{\ell+1,-m}Y_{\ell+1,m-1}\right.
-\left.\hat{s}_{m}\tilde{h}_{\ell+1,m}Y_{\ell+1,m+1}\right),
\label{eq:reccc11}\\
\tilde{I}_{2}\frac{d}{dy}& Y_{\ell,m}+(\ell+1)I_{2}Y_{\ell,m}-m\hat{I}_{2}Y_{\ell,-m}
=-\frac{1}{2}\left(\tilde{s}_{m}\tilde{h}_{\ell+1,-m}Y_{\ell+1,-m+1}\right.
+\left.s_{m}\tilde{h}_{\ell+1,m}Y_{\ell+1,-m-1}\right),\label{eq:reccc21}
\end{align}
\begin{align}
I_{1}\frac{d}{dy}Y_{\ell,m}-m\hat{I}_{3}Y_{\ell,-m}
=&\frac{1}{2}\left(\check{s}_{m}\hat{h}_{\ell,m}Y_{\ell,m-1}
-\hat{s}_{m}\hat{h}_{\ell,-m}Y_{\ell,m+1}\right),\label{eq:reccc3}\\
I_{2}\frac{d}{dy}Y_{\ell,m}-m\hat{I}_{4}Y_{\ell,-m}
=&-\frac{1}{2}\left(\tilde{s}_{m}\hat{h}_{\ell,m}Y_{\ell,-m+1}
+s_{m}\hat{h}_{\ell,-m}Y_{\ell,-m-1}\right).\label{eq:reccc4}
\end{align}
where
\begin{align*}
&\tilde{I}_{0}(y,\phi):=y^2-1, \ \
\tilde{I}_{1}(y,\phi):=y\sqrt{1-y^2}\cos\phi,\\
&\tilde{I}_{2}(y,\phi):=y\sqrt{1-y^2}\sin\phi,\ \
\hat{I}_{1}(y,\phi):=(1-y^2)^{-\frac{1}{2}}\sin\phi,\\
&\hat{I}_{2}(y,\phi):=-(1-y^2)^{-\frac{1}{2}}\cos\phi,
\ \
\hat{I}_{3}(y,\phi):=y(1-y^2)^{-\frac{1}{2}}\sin\phi, \\
&\hat{I}_{4}(y,\phi):=-y(1-y^2)^{-\frac{1}{2}}\cos\phi,
\ \
 \hat{h}_{\ell,m}:=\sqrt{(\ell+m)(\ell-m+1)}.
\end{align*}

\end{itemize}

\end{thm}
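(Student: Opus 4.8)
The plan is to treat every identity in Theorem \ref{lem:recder} as a \emph{pointwise} relation in $(y,\phi)$ and to verify it by substituting the piecewise definition \eqref{eq:sh} of $Y_{\ell,m}$, after which each identity splits into a statement about the associated Legendre polynomials $\check{P}_\ell^{m}$ (governed by Lemma \ref{thm:recderalp}) multiplied by a purely trigonometric factor that is resolved by the elementary product-to-sum formulas. The only genuine bookkeeping is (a) rewriting the recurrences of Lemma \ref{thm:recderalp}, which are stated for $\hat{P}_\ell^m$, in terms of the normalized $\check{P}_\ell^m=\sqrt{(\ell-m)!/(\ell+m)!}\,\hat{P}_\ell^m$, and (b) matching the $\sqrt2$ factors and the cosine/sine type prescribed by \eqref{eq:sh}.

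First I would dispose of the $\phi$-independent identities \eqref{eq:recc0}, \eqref{eq:reccc0} and \eqref{eq:reccc01}, for which the trigonometric factor of $Y_{\ell,m}$ is common to every term and simply cancels. For \eqref{eq:recc0} one rewrites \eqref{eq:rec1} in terms of $\check P$: the prefactors $(\ell-m+1)$ and $(\ell+m)$ combine with the ratios of factorials to produce exactly $h_{\ell+1,m}=\sqrt{(\ell+1-m)(\ell+1+m)}$ and $h_{\ell,m}=\sqrt{(\ell-m)(\ell+m)}$. The two derivative relations follow the same pattern: \eqref{eq:reccc0} is the renormalization of \eqref{eq:der1}, and \eqref{eq:reccc01} is obtained from it by eliminating $h_{\ell,m}\check P_{\ell-1}^m$ through the already-established \eqref{eq:recc0}. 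No sign functions are needed here because the order $m$ is preserved throughout.

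The substantive work is the $\phi$-dependent family \eqref{eq:recc1}, \eqref{eq:recc2} and \eqref{eq:reccc1}--\eqref{eq:reccc4}. Here multiplication by $I_1=\sqrt{1-y^2}\cos\phi$ or $I_2=\sqrt{1-y^2}\sin\phi$ forces two simultaneous shifts. The trigonometric factor is split by $\cos\phi\cos(m\phi)=\tfrac12(\cos((m-1)\phi)+\cos((m+1)\phi))$ and its three sine/cosine analogues, sending $m\mapsto m\pm1$; meanwhile the Legendre factor $\sqrt{1-y^2}\,\check P_\ell^m$ (or, in the derivative identities, the combinations $(y^2-1)\frac{d}{dy}\check P_\ell^m$ and $(1-y^2)^{-1/2}\check P_\ell^m$) is expanded by the companion relations \eqref{eq:rec2}--\eqref{eq:rec6} and \eqref{eq:der2}, which raise or lower the upper index by one so as to match. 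After renormalization the factorial prefactors collapse to the quantities $\tilde h_{\ell\pm1,\pm m}$ and $\hat h_{\ell,m}$ defined in the statement. The parity rule $\check P_\ell^{-m}=(-1)^m\check P_\ell^m$ is what converts the terms coming from the $\sin(|m|\phi)$ branch into the $Y_{\ell,\pm m}$ that appear on the right-hand sides.

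The hard part will be the boundary case analysis hidden in the sign functions $s_m,\tilde s_m,\hat s_m,\check s_m$. When the index shift $m\mapsto m\pm1$ lands on $0$ or $\pm1$, two mismatches occur at once: the $\sqrt2$ normalization of \eqref{eq:sh} is present for $|m|\neq0$ but absent for $m=0$, and the cosine/sine character flips as $m$ crosses zero. The role of the corrections $\sqrt{\delta_{m,1}+1}$, $\sqrt{\delta_{m,0}+1}$, $\sqrt{\delta_{m,-1}+\delta_{m,0}+1}$ together with the factors $(1-\delta_{m,0})$ and $(1-\delta_{m,\pm1})$ is precisely to reconcile these mismatches so that a single uniform formula survives. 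I would therefore split the verification into the generic regime $|m|\geq2$, where all $\sqrt2$'s are present and no type change occurs, and the finitely many exceptional values $m\in\{-1,0,1\}$ treated separately; confirming that the sign functions reproduce the correct $\sqrt2$'s and signs in each exceptional case is the delicate, though entirely elementary, core of the argument.
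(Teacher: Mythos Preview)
Your proposal is correct and follows essentially the same route as the paper: substitute the definition \eqref{eq:sh}, apply the associated Legendre relations of Lemma~\ref{thm:recderalp} together with the trigonometric product-to-sum formulas, and then track the normalization factors. The paper carries this out explicitly only for the generic case $m>1$ and declares the remaining cases analogous, whereas you are more explicit about isolating the boundary values $m\in\{-1,0,1\}$ where the $\sqrt2$ and sign bookkeeping bites---but the underlying argument is the same.
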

\begin{proof}
Only the case of $m>1$ is taken as an example because other cases can be obtained similarly.

(1) Thanks to \eqref{eq:rec1}, the identity \eqref{eq:recc0} is obvious.
Due to \eqref{eq:rec4} and \eqref{eq:rec5},  one has
\begin{align*}
I_{1}&\hat{P}_{\ell}^{m}\cos((m\phi)=\frac{1}{2}\sqrt{1-y^2}\hat{P}_{\ell}^{m}\cos((m+1)\phi)
+\frac{1}{2}\sqrt{1-y^2}\hat{P}_{\ell}^{m}\cos((m-1)\phi)\\
=&\frac{1}{2\ell+1}\left(-\hat{P}_{\ell+1}^{m+1}+\hat{P}_{\ell-1}^{m+1}\right)\cos((m+1)\phi)\\
+&\frac{1}{2\ell+1}\left((\ell-m+1)(\ell-m+2)\hat{P}_{\ell+1}^{m-1}-(\ell+m-1)(\ell+m)\hat{P}_{\ell-1}^{m-1}\right)\cos((m-1)\phi).
\end{align*}
Thus \eqref{eq:recc1} and \eqref{eq:recc2} can be gotten.

(2) Due to \eqref{eq:der1}, one has
\[
\tilde{I}_{0}\frac{d}{dy}\hat{P}_{\ell}^{m}-\ell I_{0}\hat{P}_{\ell}^{m}=-(\ell+m)\hat{P}_{\ell-1}^{m},
\]
which deduces \eqref{eq:reccc0}. Combining it with \eqref{eq:recc0} gives \eqref{eq:reccc01}.

(3) Due to \eqref{eq:rec2}, \eqref{eq:rec6} and \eqref{eq:der1}, one has
\begin{align*}
&\left(\tilde{I}_{1}\frac{d}{dy}\hat{P}_{\ell}^{m}-\ell I_{1}\hat{P}_{\ell}^{m}\right)\cos(m\phi)-m\hat{I}_{1}\hat{P}_{\ell}^{m}\sin(m\phi)\\
=&\frac{\sqrt{1-y^2}}{y^2-1}\left(\ell y^2\hat{P}_{\ell}^{m}-(\ell+m)y\hat{P}_{\ell-1}^{m}-\ell(y^2-1)\hat{P}_{\ell}^{m}\right)
\cos\phi\cos(m\phi)-m(1-y^2)^{-\frac{1}{2}}\hat{P}_{\ell}^{m}\sin\phi\sin(m\phi)\\
=&-(1-y^2)^{-\frac{1}{2}}\left(\left(\ell \hat{P}_{\ell}^{m}-(\ell+m)y\hat{P}_{\ell-1}^{m}\right)
\cos\phi\cos(m\phi)+m\hat{P}_{\ell}^{m}\sin\phi\sin(m\phi)\right)\\
=&-m(1-y^2)^{-\frac{1}{2}}\hat{P}_{\ell}^{m}\cos((m-1)\phi)-\hat{P}_{\ell-1}^{m+1}\cos\phi\cos(m\phi)\\
=&\frac{1}{2}\left(\hat{P}_{\ell-1}^{m+1}+(\ell+m-1)(\ell+m)\hat{P}_{\ell-1}^{m-1}\right)\cos((m-1)\phi)
-\frac{1}{2}\hat{P}_{\ell-1}^{m+1}\left(\cos((m-1)\phi)+\cos((m+1)\phi)\right)\\
=&\frac{1}{2}(\ell+m-1)(\ell+m)\hat{P}_{\ell-1}^{m-1}\cos((m-1)\phi)
-\frac{1}{2}\hat{P}_{\ell-1}^{m+1}\cos((m+1)\phi).
\end{align*}
Thus \eqref{eq:reccc1} and \eqref{eq:reccc2} can be gotten. Combining  them with \eqref{eq:recc1} and \eqref{eq:recc2} obtains \eqref{eq:reccc11} and \eqref{eq:reccc21}.

(4) Due to \eqref{eq:rec2} and \eqref{eq:der2}, one has
\begin{align*}
&I_{1}\frac{d}{dy}\hat{P}_{\ell}^{m}\cos(m\phi)-m\hat{I}_{3}\hat{P}_{\ell}^{m}\sin(m\phi)\\
=&\frac{\sqrt{1-y^2}}{y^2-1}\left(\sqrt{1-y^2}\hat{P}_{\ell}^{m+1}+my\hat{P}_{\ell}^{m}\right)\cos\phi\cos(m\phi)
-my(1-y^2)^{-\frac{1}{2}}\hat{P}_{\ell}^{m}\sin\phi\sin(m\phi)\\
=&-my(1-y^2)^{-\frac{1}{2}}\hat{P}_{\ell}^{m}\cos((m-1)\phi)-\hat{P}_{\ell}^{m+1}\cos\phi\cos(m\phi)\\
=&\frac{1}{2}\left(\hat{P}_{\ell}^{m+1}+(\ell+m)(\ell-m+1)\hat{P}_{\ell}^{m-1}\right)\cos((m-1)\phi)-\frac{1}{2}\hat{P}_{\ell}^{m+1}\left(\cos((m-1)\phi)+\cos((m+1)\phi)\right)\\
=&\frac{1}{2}(\ell+m)(\ell-m+1)\hat{P}_{\ell}^{m-1}\cos((m-1)\phi)-\frac{1}{2}\hat{P}_{\ell}^{m+1}\cos((m+1)\phi).
\end{align*}
Thus  \eqref{eq:reccc3} and \eqref{eq:reccc4} are gotten.
The proof is completed.
\qed\end{proof}
\label{subsec:zerosh}

\section{Moment method by operator projection} 
\label{sec:moment}
This section begins to extend the moment method by operator projection  \cite{MR:2014,Kuang:2017}
to the special relativistic Boltzmann equation \eqref{eq:Boltz} and derive its arbitrary order  hyperbolic moment model. For the sake of convenience,  units in which both the speed of light $c$  and  rest mass $m$ of particle are equal to one will be used hereafter.

Similar to \cite{GRADP:1974}, the momentum  $p^{\alpha}$ at a point is decomposed  as
\begin{equation}
\label{eq:pdec}
p^{\alpha}=U^{\alpha}E+\sqrt{E^2-1}l^{\alpha},
\end{equation}
where $l^{\alpha}$ is an unit spacelike vector orthogonal to $U^{\alpha}$, i.e.
\[
l^{\alpha}l_{\alpha}=-1,\quad l^{\alpha}U_{\alpha}=0.
\]
We introduce an orthogonal tetrad $n_{i}^{\alpha}$ ($i=1,2,3$) orthogonal to $U^{\alpha}$ so that
\[
U_{\alpha}n_{i}^{\alpha}=0,\quad g_{\alpha\beta}n_{i}^{\alpha}n_{j}^{\beta}=-\delta_{i,j}.
\]
Using the Lorentz transformation \cite{SR:1961} to the local rest frame where $(U^{\alpha})=(1,0,0,0)$, and taking $n_{i}^{\alpha}=\delta_{i,\alpha}$ ($i=1,2,3$)
 can obtain
\begin{equation}
\label{eq:ni}
n_{i}^{0}=U^{i},\quad n_{i}^{j}=((U^{0})^{2}-1)^{-1}U^{i}U^{j}(U^{0}-1)+\delta_{i,j},\quad i,j=1,2,3.
\end{equation}

Thus  $l^{\alpha}$ can be expressed as
\[
l^{\alpha}=\sin\xi\cos\phi n_{1}^{\alpha}+\sin\xi\sin\phi n_{2}^{\alpha}+\cos\xi n_{3}^{\alpha},
\]
where
\[
(\sin\xi\cos\phi,\sin\xi\sin\phi,\cos\xi):=-(E^2-1)^{-\frac{1}{2}}\left(n_{1}^{\alpha}p_{\alpha},n_{2}^{\alpha}p_{\alpha},n_{3}^{\alpha}p_{\alpha}\right),
\]
and $\xi\in[0,\pi]$, $\phi\in[0,2\pi)$.
It is easy to prove that it is well-defined, that is to say, the above vector is an unit vector.
If denoting $y:=\cos\xi$, then one has
\begin{equation}
\label{eq:yphi}
(\sqrt{1-y^2}\cos\phi,\sqrt{1-y^2}\sin\phi,y):=-(E^2-1)^{-\frac{1}{2}}\left(n_{1}^{\alpha}p_{\alpha},n_{2}^{\alpha}p_{\alpha},n_{3}^{\alpha}p_{\alpha}\right).
\end{equation}

\subsection{Weighted polynomial space}
\label{subsec:expand}
In order to use the moment method by the operator projection
to derive the hyperbolic moment model of the kinetic equation,
one should define weighted polynomial spaces and  norms
 as well as the projection operator.
 Thanks to the equilibrium distribution $f^{(0)} $ in \eqref{eq:equm1},
 the weight function is chosen as $g^{(0)}$, which will be
replaced with  the new notation $g^{(0)}_{[\vec{u},\theta]}$, considering
the dependence of $g^{(0)}$  on the  macroscopic fluid velocity $\vec u$
and $\theta=k_BT/mc^2=\zeta^{-1}$, that is
\begin{equation}\label{eq:equm1-zzzzz}
g^{(0)}_{[\vec{u},\theta]}=\frac{\zeta}{4\pi K_{2}(\zeta)}\exp\left(-\frac{E}{\theta}\right),\ E=U_{\alpha}p^{\alpha}.
\end{equation}

Associated with the weight function $g^{(0)}_{[\vec{u},\theta]}$,  our weighted
polynomial space is defined by
    \[
    \mathbb{H}^{g^{(0)}_{[\vec{u},\theta]}}:
    ={\rm span}\left\{p^{\mu_{1}}p^{\mu_{2}}\cdots p^{\mu_{\ell}}g^{(0)}_{[\vec{u},\theta]}:
     \ \ \mu_{i}=0,1,2,3 ,\quad  \ell\in\mathbb{N}\right\},
    \]
which is an infinite-dimensional linear space equipped with the inner product
\[
   <f,g>_{g^{(0)}_{[\vec{u},\theta]}}:=\int_{\mathbb{R}^{3}}\frac{1}{g^{(0)}_{[\vec{u},\theta]}}f(\vec{p})g(\vec{p})\frac{d^{3}\vec{p}}{p^{0}}, \quad f,g\in \mathbb{H}^{g^{(0)}_{[\vec{u},\theta]}}.
\]
Similarly, for a finite positive integer $M\in\mathbb{N}$,   a finite-dimensional  weighted
polynomial space   can  be defined by
\[
    \mathbb{H}^{g^{(0)}_{[\vec{u},\theta]}}_{M}:={\rm span}\left\{p^{\mu_{1}}p^{\mu_{2}}\cdots p^{\mu_{\ell}}g^{(0)}_{[\vec{u},\theta]}: \ \ \mu_{i}=0,1,2,3 ,\quad \ell=0,1,\cdots, M\right\},
    \]
    which is  a closed subspace of $\mathbb{H}^{g^{(0)}_{[\vec{u},\theta]}}$ obviously.

Unlike the  one dimensional case \cite{Kuang:2017}, the basis of $\mathbb{H}^{g^{(0)}_{[\vec{u},\theta]}}_{M}$ cannot be obtained easily.
People usually use the following weighted polynomials \cite{DM:2012,RK:1980}
\begin{equation}\label{nobasis}
g^{(0)}_{[\vec{u},\theta]}
P_{k}^{(\ell)}(E{;}\theta^{-1})p^{<\mu_{1}\cdots}p^{\mu_{\ell}>}
\end{equation}
to span the  spaces $\mathbb{H}^{g^{(0)}_{[\vec{u},\theta]}}$ and $\mathbb{H}^{g^{(0)}_{[\vec{u},\theta]}}_{M}$,
where
$
p^{<\mu_{1}\cdots}p^{\mu_{\ell}>}:=\Delta_{\nu_{1}\cdots\mu_{\ell}}^{\mu_{1}\cdots\mu_{\ell}}p^{\nu_{1}}\cdots p^{\nu_{\ell}}$ denotes the $\ell$th order irreducible tensor, here
\[
\Delta^{\alpha}_{\beta}:=g^{\alpha}_{\beta}-\frac{1}{c^2}U^{\alpha}U_{\beta},\quad
\Delta^{\alpha\beta}_{\mu\nu}:= \frac{1}{2}\left( \Delta_{\mu}^{\alpha}\Delta_{\nu}^{\beta}+ \Delta_{\mu}^{\beta}\Delta_{\nu}^{\alpha}-\frac{2}{3} \Delta_{\mu\nu}\Delta^{\alpha\beta} \right),
\]
and the higher order ones could be found in \cite{RK:1980} and are
symmetric and traceless.
Moreover, the irreducible tensors satisfy
\[
\int_{\mathbb{R}^3}  p^{<\mu_{1}\cdots}p^{\mu_{m}>}p_{<\nu_{1}\cdots}p_{\nu_{k}>}F(E)\frac{d^3\vec p}{p^{0}}
= \frac{(-1)^{\ell}m!\delta_{mk}}{(2m+1)!!} \Delta^{\mu_{1}\cdots\mu_{m}}_{\nu_{1}\cdots\nu_{m}} \int F(E)(E^2-1)^{\ell}\frac{d^3\vec p}{p^{0}},
\]
for any function $F(E)$ of $E$.
Unfortunately, the weighted polynomials in \eqref{nobasis} are linearly dependent. It is obvious for  the case of $\ell=1$, so they cannot be the basis of $\mathbb{H}^{g^{(0)}_{[\vec{u},\theta]}}$ or $\mathbb{H}^{g^{(0)}_{[\vec{u},\theta]}}_{M}$.
In the one dimensional case, one can simply delete $p^{<0>}$ to obtain the basis, because the irreducible tensors are zeros for $\ell\geq2$.
 But in the three dimensional case, it is complicate.
This work uses the real  spherical harmonics to replace the irreducible tensors
and {presents} the basis of $\mathbb{H}^{g^{(0)}_{[\vec{u},\theta]}}$ or $\mathbb{H}^{g^{(0)}_{[\vec{u},\theta]}}_{M}$.

Thanks to Theorem \ref{lem:admissible1},
for all physically admissible $\vec{u}$ and $\theta$ satisfying $|\vec{u}|<1$ and $\theta>0$,
introduce two  notations
\begin{align}
\label{eq:basis}
\nonumber\vec{\mathcal{P}}_{\infty}[\vec{u},\theta]:=&(\tilde{P}_{0,0}^{(0)}[\vec{u},\theta],\cdots,\tilde{P}_{M,0}^{(0)}[\vec{u},\theta],
\tilde{P}_{M-1,-1}^{(1)}[\vec{u},\theta],
\tilde{P}_{M-1,0}^{(1)}[\vec{u},\theta],\\
&\tilde{P}_{M-1,1}^{(1)}[\vec{u},\theta],\cdots,\tilde{P}_{0,-M}^{(M)},\cdots,\tilde{P}_{0,M}^{(M)},\cdots)^{T},
\\
\label{eq:huap}
\nonumber\vec{\mathcal{P}}_{M}[\vec{u},\theta]:=&(\tilde{P}_{0,0}^{(0)}[\vec{u},\theta],\tilde{P}_{1,0}^{(0)}[\vec{u},\theta],\tilde{P}_{0,-1}^{(1)}[\vec{u},\theta],
\tilde{P}_{0,0}^{(1)}[\vec{u},\theta],\tilde{P}_{0,1}^{(1)}[\vec{u},\theta],
\cdots,\tilde{P}_{M,0}^{(0)}[\vec{u},\theta],\\
&\tilde{P}_{M-1,-1}^{(1)}[\vec{u},\theta],
\tilde{P}_{M-1,0}^{(1)}[\vec{u},\theta],\tilde{P}_{M-1,1}^{(1)}[\vec{u},\theta],\cdots,\tilde{P}_{0,-M}^{(M)},\cdots,\tilde{P}_{0,M}^{(M)} )^{T},
\end{align}
 where $\tilde{P}_{k,m}^{(\ell)}[\vec{u},\theta]:=g^{(0)}_{[\vec{u},\theta]}{P_{k}^{(\ell)}(E{;}
 \theta^{-1})\tilde{Y}_{\ell,m}}$,
$\tilde{Y}_{\ell,m}:=(E^2-1)^{\frac{\ell}{2}}{Y_{\ell,m}(y,\phi)}$, $|m|\leq\ell$, $\ell\in\mathbb{N}$,
and the length of the vector \eqref{eq:huap} is equal to
\[
\sum_{\ell=0}^{M}(2\ell+1)(M+1-\ell)=(M+1)\left(\frac{M(2M+1)}{6}+1\right)=:N_{M}.
\]

\begin{lemma}
\label{lem:basis}
The set of all components of $\vec{\mathcal{P}}_{\infty}[\vec{u},\theta]$ (resp. $\vec{\mathcal{P}}_{M}[\vec{u},\theta]$)
form a standard orthogonal basis of $\mathbb{H}^{g^{(0)}_{[\vec{u},\theta]}}$
 (resp. $\mathbb{H}^{g^{(0)}_{[\vec{u},\theta]}}_{M}$).
\end{lemma}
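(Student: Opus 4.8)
The plan is to prove the two statements that together yield a basis: that the listed elements are orthonormal (hence linearly independent), and that they span the stated space. First I would establish orthonormality by a direct computation of $<\tilde{P}_{k,m}^{(\ell)},\tilde{P}_{k',m'}^{(\ell')}>_{g^{(0)}_{[\vec{u},\theta]}}$. Using the momentum decomposition \eqref{eq:pdec} together with the angular parametrization \eqref{eq:yphi}, I change variables from $\vec{p}$ to $(E,y,\phi)$ with $E\geq 1$, $y\in[-1,1]$, $\phi\in[0,2\pi)$; on the mass shell the invariant measure becomes
\[
\frac{d^{3}\vec{p}}{p^{0}}=\sqrt{E^2-1}\,dE\,dy\,d\phi.
\]
Substituting the definition $\tilde{P}_{k,m}^{(\ell)}=g^{(0)}_{[\vec{u},\theta]}P_{k}^{(\ell)}(E;\theta^{-1})(E^2-1)^{\frac{\ell}{2}}Y_{\ell,m}(y,\phi)$ into the inner product, the factor $1/g^{(0)}_{[\vec{u},\theta]}$ cancels one copy of the weight and leaves a single Maxwell--J\"uttner exponential, so the integral factorizes into an angular part and a radial part.

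The angular integral
\[
\int_{-1}^{1}\int_{0}^{2\pi}Y_{\ell,m}Y_{\ell',m'}\,d\phi\,dy=\frac{4\pi}{2\ell+1}\delta_{\ell,\ell'}\delta_{m,m'}
\]
of \eqref{eq:orthsh} forces $\ell=\ell'$ and $m=m'$. When $\ell=\ell'$ the surviving factors $(E^2-1)^{\ell}\sqrt{E^2-1}=(E^2-1)^{\ell+\frac12}$, together with the constants, reproduce exactly the weight $\omega^{(\ell)}(E;\zeta)$ of \eqref{eq:omegal} with $\zeta=\theta^{-1}$, so the radial part becomes $(P_{k}^{(\ell)},P_{k'}^{(\ell)})_{\omega^{(\ell)}}=\delta_{k,k'}$ by \eqref{eq:P01orth}. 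Hence $<\tilde{P}_{k,m}^{(\ell)},\tilde{P}_{k',m'}^{(\ell')}>_{g^{(0)}_{[\vec{u},\theta]}}=\delta_{k,k'}\delta_{\ell,\ell'}\delta_{m,m'}$, which proves orthonormality and therefore linear independence of all components of $\vec{\mathcal{P}}_{M}[\vec{u},\theta]$ and of $\vec{\mathcal{P}}_{\infty}[\vec{u},\theta]$.

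For the spanning, the key observation is that $\tilde{Y}_{\ell,m}=(E^2-1)^{\frac{\ell}{2}}Y_{\ell,m}$ is a solid harmonic, i.e. a homogeneous polynomial of degree $\ell$ in the components $n_{i}^{\alpha}p_{\alpha}$, hence in $p^{\mu}$; and since $E=U_{\alpha}p^{\alpha}$ is linear in $p^{\mu}$, $P_{k}^{(\ell)}(E;\theta^{-1})$ is a polynomial of degree $k$ in $p^{\mu}$. Thus each $\tilde{P}_{k,m}^{(\ell)}$ is $g^{(0)}_{[\vec{u},\theta]}$ times a polynomial of degree $k+\ell$ in $p^{\mu}$, so it lies in $\mathbb{H}^{g^{(0)}_{[\vec{u},\theta]}}$, and in $\mathbb{H}^{g^{(0)}_{[\vec{u},\theta]}}_{M}$ whenever $k+\ell\leq M$; since the components of $\vec{\mathcal{P}}_{M}[\vec{u},\theta]$ in \eqref{eq:huap} are precisely those with $k+\ell\leq M$, all listed elements lie in the target space. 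For the reverse inclusion I would expand a generator $p^{\mu_{1}}\cdots p^{\mu_{\ell}}$ by \eqref{eq:pdec}: each factor contributes either $U^{\mu}E$ or $\sqrt{E^2-1}\,l^{\mu}$, so a term with $j$ spacelike choices equals $E^{\ell-j}(E^2-1)^{j/2}$ times a degree-$j$ monomial in the unit-vector components of $l^{\mu}$. Decomposing that spherical monomial into $Y_{\ell',m}$ with $\ell'\leq j$ and $\ell'\equiv j\pmod 2$, and absorbing the even power $(E^2-1)^{(j-\ell')/2}$ into a polynomial in $E$, shows $p^{\mu_{1}}\cdots p^{\mu_{\ell}}$ is a combination of $q(E)\tilde{Y}_{\ell',m}$ with $\ell'\leq\ell$ and $\deg q\leq\ell-\ell'$; expanding $q$ in the basis $\{P_{k}^{(\ell')}\}_{k}$ produces exactly the functions $\tilde{P}_{k,m}^{(\ell')}/g^{(0)}_{[\vec{u},\theta]}$ with $k+\ell'\leq\ell$. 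Multiplying back by $g^{(0)}_{[\vec{u},\theta]}$ shows every generator lies in the span of the listed elements.

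The routine part is the orthonormality computation, which reduces to a change of variables followed by \eqref{eq:orthsh} and \eqref{eq:P01orth}. The hard part will be the spanning direction: correctly identifying $\tilde{Y}_{\ell,m}$ as a polynomial solid harmonic in $p^{\mu}$ and carrying out the degree-and-parity bookkeeping, so that a product of $\ell$ momenta generates only the elements with $k+\ell'\leq\ell$ and none of higher index. This accounting is precisely what guarantees that the truncation levels in \eqref{eq:basis} and \eqref{eq:huap} match the filtration of $\mathbb{H}^{g^{(0)}_{[\vec{u},\theta]}}_{M}$ by polynomial degree.
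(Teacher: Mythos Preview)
Your proposal is correct. The orthonormality computation and the verification that each $\tilde{P}_{k,m}^{(\ell)}$ lies in $\mathbb{H}^{g^{(0)}_{[\vec{u},\theta]}}_{k+\ell}$ match the paper's argument essentially line for line; the Jacobian $d^{3}\vec{p}/p^{0}=\sqrt{E^2-1}\,dE\,dy\,d\phi$ and the reduction to \eqref{eq:orthsh} followed by \eqref{eq:P01orth} are exactly what the paper does in part (iii) of its proof.

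The spanning direction, however, is handled differently. The paper proceeds by induction on the polynomial degree $M$: it writes $p^{\mu_{1}}\cdots p^{\mu_{M+1}}g^{(0)}_{[\vec{u},\theta]}$ as $p^{\mu_{M+1}}$ times the inductive expansion, then invokes the three-term recurrences of Theorems~\ref{thm:rec} and~\ref{thm:huxiang} for $\{P_{k}^{(\ell)}\}$ together with the spherical-harmonic recurrences of Theorem~\ref{lem:recder} to push every product back into the index range $k+\ell\le M+1$. Your route is more structural: you observe directly that $(E^2-1)^{\ell/2}Y_{\ell,m}$ is a solid harmonic of degree $\ell$ in the three linear forms $n_i^{\alpha}p_{\alpha}$, and then use the standard decomposition of a degree-$j$ spherical polynomial into harmonics of degrees $j,j-2,\ldots$ to read off the filtration at once. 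Your argument is shorter and avoids the explicit recurrence machinery of Sections~\ref{sec:orth} and~\ref{sec:SH}; the paper's inductive computation, on the other hand, yields as a by-product the explicit transition coefficients $c_{i,m,\ell}^{\mu_1,\ldots,\mu_M}$, which are in the same spirit as the matrices $\vec{M}_{M}^{\alpha}$ used later in Lemma~\ref{lem:rec} and in the derivation of the moment system.
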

\begin{proof}
(i)  Using \eqref{eq:sh} and \eqref{eq:alp} gives
\begin{align*}
\tilde{Y}_{\ell,m}&=(E^2-1)^{\frac{\ell}{2}}\kappa_{\ell,m}^{1}(1-y^2)^{\frac{m}{2}}
\frac{d^{m}}{dy^{m}}\hat{P}_{\ell}(y)
\begin{cases}
\sin(|m|\phi), & m<0, \\
\cos({m}\phi), & m\geq0,
\end{cases}
\\
=&(E^2-1)^{\frac{\ell}{2}}(1-y^2)^{\frac{m}{2}}\sum_{k=0}^{m}\kappa_{\ell,m,k}^{2}\sin^{k}\phi\cos^{m-k}\phi\frac{d^{\ell+m}}{dy^{\ell+m}}(y^2-1)^{\ell}\\
=&(E^2-1)^{\frac{\ell}{2}}(1-y^2)^{\frac{m}{2}}\sum_{k=0}^{m}\kappa_{\ell,m,k}^{2}\sin^{k}\phi\cos^{m-k}\phi\sum_{k=0}^{\lfloor\frac{\ell-m}{2}\rfloor}\tilde{\kappa}_{\ell,m,k}y^{2k+2\{\frac{\ell-m}{2}\}}\\
=&\sum_{\chi_{j}}\hat{\kappa}_{\ell,m,\chi_{j},\vec{u}}(p^{0})^{\chi_{0}}(p^{1})^{\chi_{1}}(p^{2})^{\chi_{2}}E^{\chi_{3}}
\sum_{k=0}^{\lfloor\frac{\ell-m}{2}\rfloor}(E^2-1)^{\lfloor\frac{\ell-m}{2}\rfloor-k}\sum_{\hat{\chi}_{j}}\check{\kappa}_{\ell,m,\hat{\chi}_{j},\vec{u}}(p^{0})^{\hat{\chi}_{0}}(p^{3})^{\hat{\chi}_{1}}E^{\hat{\chi}_{2}},
\end{align*}
where $\kappa_{\ell,m}^{1}$, $\kappa_{\ell,m,k}^{2}$, $\hat{\kappa}_{\ell,m,\chi_{j},\vec{u}}$,
$\tilde{\kappa}_{\ell,m,k}$, $\check{\kappa}_{\ell,m,\hat{\chi}_{j},\vec{u}}$ are {constant}
 and independent on $\vec{p}$
and $\chi_{j}$, $\hat{\chi}_{j}\in\mathbb{N}$, $\sum_{j=0}^{3}\chi_{j}=m$, $\sum_{j=0}^{2}\hat{\chi}_{j}=2k+2\{\frac{\ell-m}{2}\}$,
and $\lfloor\cdot\rfloor$ and $\{\cdot\}$ denote the
integer part and the decimal part respectively.

Thus it holds
\[
g^{(0)}_{[\vec{u},\theta]}\tilde{Y}_{\ell,m}\in \mathbb{H}^{g^{(0)}_{[\vec{u},\theta]}}_{\ell},
\]
so each component of $\vec{\mathcal{P}}_{\infty}[\vec{u},\theta]$
(resp.  $\vec{\mathcal{P}}_{M}[\vec{u},\theta]$)
belongs to $\mathbb{H}^{g^{(0)}_{[\vec{u},\theta]}}$  (resp. $\mathbb{H}^{g^{(0)}_{[\vec{u},\theta]}}_{M}$).

(ii) The mathematical induction is  used to prove
that any element in the space $\mathbb{H}^{g^{(0)}_{[\vec{u},\theta]}}$  (resp. $\mathbb{H}^{g^{(0)}_{[\vec{u},\theta]}}_{M}$)
can be written into a linear combination  of
vectors in $\vec{\mathcal{P}}_{\infty}[\vec{u},\theta]$
(resp.  $\vec{\mathcal{P}}_{M}[\vec{u},\theta]$).
For $M=1$, it is clear to have the linear combination
\begin{align*}
p^{\alpha}g^{(0)}_{[\vec{u},\theta]}
=&\left(U^{\alpha}E+\sqrt{E^2-1}(\sqrt{1-y^2}\cos\phi n_{1}^{\alpha}+\sqrt{1-y^2}\sin\phi n_{2}^{\alpha}+y n_{3}^{\alpha})\right)g^{(0)}_{[\vec{u},\theta]}\\
=&\left(U^{\alpha}E-(E^2-1)^{\frac{1}{2}}(Y_{1,1} n_{1}^{\alpha}+Y_{1,-1} n_{2}^{\alpha}+Y_{1,0} n_{3}^{\alpha})\right)g^{(0)}_{[\vec{u},\theta]}\\
=&(c_{1}^{(0)})^{-1}U^{\alpha}\tilde{P}_{1,0}^{(0)}+(c_{0}^{(0)})^{-1}U^{\alpha}x_{1,1}^{(0)}\tilde{P}_{0,0}^{(0)}\\
&-(c_{0}^{(1)})^{-1}n_{1}^{\alpha}\tilde{P}_{0,1}^{(1)}
-(c_{0}^{(1)})^{-1}n_{2}^{\alpha}\tilde{P}_{0,-1}^{(1)}
-(c_{0}^{(1)})^{-1}n_{2}^{\alpha}\tilde{P}_{0,0}^{(1)},
\end{align*}
where  the decomposition of the particle velocity vector \eqref{eq:pdec} has been used.

Assume that the  linear combination
\[
p^{\mu_{1}}p^{\mu_{2}}\cdots p^{\mu_{M}}g^{(0)}_{[\vec{u},\theta]}=\sum_{\ell=0}^{M}\sum_{m=-\ell}^{\ell}\sum_{i=0}^{M-\ell}c_{i,m,\ell}^{\mu_{1},\cdots,\mu_{M}}\tilde{P}_{i,m}^{(\ell)},\quad \mu_{i}=0,1,2,3,\quad c_{i,m,\ell}^{\mu_{1},\cdots,\mu_{M}}\in \mathbb{R},
\]
 holds.  One has to show that $p^{\mu_{1}}p^{\mu_{2}}\cdots p^{\mu_{M+1}}g^{(0)}_{[\vec{u},\theta]}$
 may be written into a linear combination of components of $\vec{\mathcal{P}}_{M+1}[\vec{u},\theta]$.
 Because
\begin{align*}
&p^{\mu_{1}}p^{\mu_{2}}\cdots p^{\mu_{M+1}}g^{(0)}_{[\vec{u},\theta]}\\
=&\left(\sum_{\ell=0}^{M}\sum_{m=-\ell}^{\ell}\sum_{i=0}^{M-\ell}c_{i,m,\ell}^{\mu_{1},\cdots,\mu_{M}}\tilde{P}_{i,m}^{(\ell)}\right)\\
&\left(U^{\mu_{M+1}}E+\sqrt{E^2-1}(\sqrt{1-y^2}\cos\phi n_{1}^{\mu_{M+1}}+\sqrt{1-y^2}\sin\phi n_{2}^{\mu_{M+1}}+y n_{3}^{\mu_{M+1}})\right)\\
=&\sum_{\ell=0}^{M}\sum_{m=-\ell}^{\ell}\sum_{i=0}^{M-\ell}\left\{U^{\mu_{M+1}}c_{i,m,\ell}^{\mu_{1},\cdots,\mu_{M}}E\tilde{P}_{i,m}^{(\ell)}
+n_{3}^{\mu_{M+1}}c_{i,m,\ell}^{\mu_{1},\cdots,\mu_{M}}\sqrt{E^2-1}y\tilde{P}_{i,m}^{(\ell)}\right.\\
&\left.+n_{1}^{\mu_{M+1}}c_{i,m,\ell}^{\mu_{1},\cdots,\mu_{M}}\sqrt{E^2-1}\sqrt{1-y^2}\cos\phi\tilde{P}_{i,m}^{(\ell)}
+n_{2}^{\mu_{M+1}}c_{i,m,\ell}^{\mu_{1},\cdots,\mu_{M}}\sqrt{E^2-1}\sqrt{1-y^2}\sin\phi\tilde{P}_{i,m}^{(\ell)}\right\}\\
=&g^{(0)}_{[\vec{u},\theta]}
\sum_{\ell=0}^{M}\sum_{m=-\ell}^{\ell}\sum_{i=0}^{M-\ell}\left\{U^{\mu_{M+1}}c_{i,m,\ell}^{\mu_{1},\cdots,\mu_{M}}
EP_{i}^{(\ell)}\tilde{Y}_{\ell,m}\right.\\
&\left.+\frac{n_{3}^{\mu_{M+1}}c_{i,m,\ell}^{\mu_{1},\cdots,\mu_{M}}}{2\ell+1}
\left(h_{\ell+1,m}P_{i}^{(\ell)}\tilde{Y}_{\ell+1,m}
+h_{\ell,m}(E^2-1)P_{i}^{(\ell)}\tilde{Y}_{\ell-1,m}\right)\right.\\
&\left.+\frac{n_{1}^{\mu_{M+1}}c_{i,m,\ell}^{\mu_{1},\cdots,\mu_{M}}}{2(2\ell+1)}P_{i}^{(\ell)}\left(\check{s}_{m}(\tilde{h}_{\ell+1,-m}\tilde{Y}_{\ell+1,m-1}-\tilde{h}_{\ell-1,m}(E^2-1)\tilde{Y}_{\ell-1,m-1})\right.\right.\\
&-\left.\left.\hat{s}_{m}(\tilde{h}_{\ell+1,m}\tilde{Y}_{\ell+1,m+1}-\tilde{h}_{\ell-1,-m}(E^2-1)\tilde{Y}_{\ell-1,m+1})\right)\right.\\
&\left.+\frac{n_{2}^{\mu_{M+1}}c_{i,m,\ell}^{\mu_{1},\cdots,\mu_{M}}}{2(2\ell+1)}P_{i}^{(\ell)}\left(s_{m}(\tilde{h}_{\ell-1,-m}(E^2-1)\tilde{Y}_{\ell-1,-m-1}-\tilde{h}_{\ell+1,m}\tilde{Y}_{\ell+1,-m-1})\right.\right.\\
&+\left.\left.\tilde{s}_{m}(\tilde{h}_{\ell-1,m}(E^2-1)\tilde{Y}_{\ell-1,-m+1}-\tilde{h}_{\ell+1,-m}\tilde{Y}_{\ell+1,-m+1})\right)\right\},
\end{align*}
where  \eqref{eq:recc0}-\eqref{eq:recc2} have been used.

By using    the three-term recurrence relations \eqref{eq:recP0P1},  \eqref{eq:recP01}, and \eqref{eq:recP10} for
 the orthogonal polynomials $\{P_{k}^{(\ell)}(x;\zeta), \ell\in\mathbb{N}\}$, one has
\begin{align*}
&p^{\mu_{1}}p^{\mu_{2}}\cdots p^{\mu_{M+1}}g^{(0)}_{[\vec{u},\theta]}\\
=&\sum_{\ell=0}^{M}\sum_{m=-\ell}^{\ell}\sum_{i=0}^{M-\ell}U^{\mu_{M+1}}c_{i,m,\ell}^{\mu_{1},\cdots,\mu_{M}}
\left(a_{i-1}^{(\ell)}\tilde{P}_{i-1,m}^{(\ell)}+b_{i}^{(\ell)}\tilde{P}_{i,m}^{(\ell)}+a_{i}^{(\ell)}\tilde{P}_{i+1,m}^{(\ell)}\right)\\
&+\sum_{\ell=0}^{M}\sum_{m=-\ell}^{\ell}\sum_{i=0}^{M-\ell}n_{3}^{\mu_{M+1}}c_{i,m,\ell}^{\mu_{1},\cdots,\mu_{M}}\\
&\left(\frac{1}{2\ell+1}h_{\ell+1,m}
\left(r_{i-1}^{(\ell+1)}\tilde{P}_{i-2,m}^{(\ell+1)}+q_{i-1}^{(\ell+1)}\tilde{P}_{i-1,m}^{(\ell+1)}+p_{i}^{(\ell+1)}\tilde{P}_{i,m}^{(\ell+1)}\right)\right.\\
&\left.+\frac{1}{2\ell-1}h_{\ell,m}
\left(p_{i}^{(\ell)}\tilde{P}_{i,m}^{(\ell-1)}+q_{i}^{(\ell)}\tilde{P}_{i+1,m}^{(\ell-1)}+r_{i+1}^{(\ell)}\tilde{P}_{i+2,m}^{(\ell-1)}\right)\right)\\
&+\sum_{\ell=0}^{M}\sum_{m=-\ell}^{\ell}\sum_{i=0}^{M-\ell}\frac{n_{1}^{\mu_{M+1}}c_{i,m,\ell}^{\mu_{1},\cdots,\mu_{M}}}{2}\\
&\left(
\frac{1}{2\ell+1}\check{s}_{m}\tilde{h}_{\ell+1,-m}\left(r_{i-1}^{(\ell+1)}\tilde{P}_{i-2,m-1}^{(\ell+1)}+q_{i-1}^{(\ell+1)}\tilde{P}_{i-1,m-1}^{(\ell+1)}+p_{i}^{(\ell+1)}\tilde{P}_{i,m-1}^{(\ell+1)}\right)\right.\\
&\left.-\frac{1}{2\ell+1}\hat{s}_{m}\tilde{h}_{\ell+1,m}\left(r_{i-1}^{(\ell+1)}\tilde{P}_{i-2,m+1}^{(\ell+1)}+q_{i-1}^{(\ell+1)}\tilde{P}_{i-1,m+1}^{(\ell+1)}+p_{i}^{(\ell+1)}\tilde{P}_{i,m+1}^{(\ell+1)}\right)
\right.\\
&\left.+\frac{1}{2\ell-1}
\hat{s}_{m}\tilde{h}_{\ell-1,-m}\left(p_{i}^{(\ell)}\tilde{P}_{i,m+1}^{(\ell-1)}+q_{i}^{(\ell)}\tilde{P}_{i+1,m+1}^{(\ell-1)}+r_{i+1}^{(\ell)}\tilde{P}_{i+2,m+1}^{(\ell-1)}\right)\right.\\
&\left.-\frac{1}{2\ell-1}
\check{s}_{m}\tilde{h}_{\ell-1,-m}\left(p_{i}^{(\ell)}\tilde{P}_{i,m-1}^{(\ell-1)}+q_{i}^{(\ell)}\tilde{P}_{i+1,m-1}^{(\ell-1)}+r_{i+1}^{(\ell)}\tilde{P}_{i+2,m-1}^{(\ell-1)}\right)\right)\\
&+\sum_{\ell=0}^{M}\sum_{m=-\ell}^{\ell}\sum_{i=0}^{M-\ell}\frac{n_{2}^{\mu_{M+1}}c_{i,m,\ell}^{\mu_{1},\cdots,\mu_{M}}}{2}\\
&\left(-\frac{1}{2\ell+1}
\tilde{s}_{m}(\tilde{h}_{\ell+1,-m}\left(r_{i-1}^{(\ell+1)}\tilde{P}_{i-2,-m+1}^{(\ell+1)}+q_{i-1}^{(\ell+1)}\tilde{P}_{i-1,-m+1}^{(\ell+1)}+p_{i}^{(\ell+1)}\tilde{P}_{i,-m+1}^{(\ell+1)}\right)\right.\\
&\left.-\frac{1}{2\ell+1}s_{m}(\tilde{h}_{\ell+1,m}\left(r_{i-1}^{(\ell+1)}\tilde{P}_{i-2,-m-1}^{(\ell+1)}+q_{i-1}^{(\ell+1)}\tilde{P}_{i-1,-m-1}^{(\ell+1)}+p_{i}^{(\ell+1)}\tilde{P}_{i,-m-1}^{(\ell+1)}\right)
\right.\\
&\left.+\frac{1}{2\ell-1}
s_{m}\tilde{h}_{\ell-1,-m}\left(p_{i}^{(\ell)}\tilde{P}_{i,-m-1}^{(\ell-1)}+q_{i}^{(\ell)}\tilde{P}_{i+1,-m-1}^{(\ell-1)}+r_{i+1}^{(\ell)}\tilde{P}_{i+2,-m-1}^{(\ell-1)}\right)\right.\\
&\left.+\frac{1}{2\ell-1}
\tilde{s}_{m}\tilde{h}_{\ell-1,m}\left(p_{i}^{(\ell)}\tilde{P}_{i,-m+1}^{(\ell-1)}+q_{i}^{(\ell)}\tilde{P}_{i+1,-m+1}^{(\ell-1)}+r_{i+1}^{(\ell)}\tilde{P}_{i+2,-m+1}^{(\ell-1)}\right)\right)\\
=&:\sum_{\ell=0}^{M+1}\sum_{m=-\ell}^{\ell}\sum_{i=0}^{M+1-\ell}c_{i,m,\ell}^{\mu_{1},
\cdots,\mu_{M+1}}\tilde{P}_{i,m}^{(\ell)}.
\end{align*}


(iii)
Because of \eqref{eq:pdec}, one has
\[
\frac{d^3\vec{p}}{p^{0}}=\frac{\left|\det(\frac{\partial(p1,p2,p3)}{\partial(E,y,\phi)})\right|}{p^{0}}d\phi dydE=\sqrt{E^2-1}d\phi dydE,
\]
where
\begin{equation*}
\frac{\partial{(p^{1},p^{2},p^{3})}}{\partial(E,y,\phi)}=
\begin{pmatrix}
\begin{smallmatrix}
U^{1}+\frac{E}{\sqrt{E^2-1}}\ell^{1} ,& -\sqrt{E^2-1}\left(\frac{y}{\sqrt{1-y^2}}(\cos\phi n_{1}^{1}+\sin\phi n_{2}^{1})+n_{3}^{1}\right) ,&
\sqrt{E^2-1}\left(\sqrt{1-y^2}(-\sin\phi n_{1}^{1}+\cos\phi n_{2}^{1})\right)\\
U^{2}+\frac{E}{\sqrt{E^2-1}}\ell^{2} ,& -\sqrt{E^2-1}\left(\frac{y}{\sqrt{1-y^2}}(\cos\phi n_{1}^{2}+\sin\phi n_{2}^{2})+n_{3}^{2}\right) ,&
\sqrt{E^2-1}\left(\sqrt{1-y^2}(-\sin\phi n_{1}^{2}+\cos\phi n_{2}^{2})\right)\\
U^{3}+\frac{E}{\sqrt{E^2-1}}\ell^{3} ,& -\sqrt{E^2-1}\left(\frac{y}{\sqrt{1-y^2}}(\cos\phi n_{1}^{3}+\sin\phi n_{2}^{3})+n_{3}^{3}\right) ,&
\sqrt{E^2-1}\left(\sqrt{1-y^2}(-\sin\phi n_{1}^{3}+\cos\phi n_{2}^{3})\right)
\end{smallmatrix}
\end{pmatrix}
\end{equation*}
is the Jacobi martrix.

 Using  \eqref{eq:P01orth} gives
\begin{align}
\nonumber <\tilde{P}_{i,m}^{(\ell)},\tilde{P}_{j,m'}^{(\ell')}>_{g^{(0)}_{[\vec{u},\theta]}}
=&\int_{\mathbb{R}^{3}}P_{i}^{(\ell)}P_{j}^{(\ell')}(E^2-1)^{\frac{\ell+\ell'}{2}}Y_{\ell,m}Y_{\ell',m'}g^{(0)}_{[\vec{u},\theta]}\frac{d^{3}\vec{p}}{p^{0}}\\
\nonumber=&\int_{1}^{+\infty}\int_{0}^{\pi}\int_{0}^{2\pi}P_{i}^{(\ell)}P_{j}^{(\ell')}(E^2-1)^{\frac{\ell+\ell'+1}{2}}g^{(0)}_{[\vec{u},\theta]}Y_{\ell,m}Y_{\ell',m'}d\phi dydE\\
\nonumber=&\frac{4\pi}{2\ell+1}\int_{1}^{+\infty}P_{i}^{(\ell)}P_{j}^{(\ell)}(E^2-1)^{\ell+\frac{1}{2}}g^{(0)}_{[\vec{u},\theta]}dE\delta_{\ell,\ell'}\delta_{m,m'}\\
\nonumber=&\left(P_{i}^{(\ell)},P_{j}^{(\ell)}\right)_{\omega^{(\ell)}}\delta_{\ell,\ell'}\delta_{m,m'}
=\delta_{i,j}\delta_{\ell,\ell'}\delta_{m,m'},\quad \ell\in\mathbb{N}, |m|\leq\ell.\label{eq:orth}
\end{align}

Combining (i) and (ii) with (iii) completes
the proof.
\qed\end{proof}

Since $\mathbb{H}^{g^{(0)}_{[\vec{u},\theta]}}_{{M}}$ is a subspace of $\mathbb{H}^{g^{(0)}_{[\vec{u},\theta]}}_{N}$ when $ M<N<+\infty$,
there exists a matrix $P_{M,N}\in \mathbb{R}^{(N_{M})\times(N_{N})}$ with full row rank
such that
$\vec{\mathcal{P}}_{M}[\vec{u},\theta]=P_{M,N}\vec{\mathcal{P}}_{N}[\vec{u},\theta]$, where
\[
\vec{P}_{M,N}:={\rm diag}\{\vec{I}_{N_{M},N_{M}},\vec{O}_{N_{M},N_{N}-N_{M}}\}.
\]
Using the properties of the orthogonal polynomials  $\{P_{n}^{(\ell)}(x;\zeta), \ell=0,1, n\geq 0\}$ in Section  \ref{sec:orth}
can further give calculation of the partial derivatives and recurrence relations of the basis functions
$ \{\tilde{P}_{k}^{(\ell)}[\vec{u},\theta]\}$.
\begin{lemma}[Derivative relations]
\label{lem:derive}
The partial derivatives of basis functions can  be calculated by
\begin{align*}
\frac{\partial \tilde{P}_{k,m}^{(\ell)}[\vec{u},\theta]}{\partial s}
=&-\frac{\partial \theta}{\partial s}\zeta^2\left(\frac{1}{2}\left(G(\zeta)-\zeta^{-1}-b_{k}^{(\ell)}\right)\tilde{P}_{k,m}^{(\ell)}[\vec{u},\theta]
-a_{k}^{(\ell)}\tilde{P}_{k+1,m}^{(\ell)}[\vec{u},\theta]\right)\\
&-\frac{\partial u_{i}}{\partial s}U^{0}g^{(0)}_{[\vec{u},\theta]}\left[\left(\frac{2\ell+1}{2\ell+3}\frac{k}{\tilde{p}_{k-1}^{(\ell+1)}}-\zeta q_{k-1}^{(\ell+1)}\right)\right.\\
&\cdot\left.\left(n_{3}^{i}h_{\ell+1,m}\tilde{P}_{k-1,m}^{(\ell+1)}+n_{1}^{i}\frac{1}{2}\left(\check{s}_{m}\tilde{h}_{\ell+1,-m}\tilde{P}_{k-1,m-1}^{(\ell+1)}
-\hat{s}_{m}\tilde{h}_{\ell+1,m}\tilde{P}_{k-1,m+1}^{(\ell+1)}\right)\right.\right.\\
&-\left.\left.n_{2}^{i}\frac{1}{2}\left(\tilde{s}_{m}\tilde{h}_{\ell+1,-m}\tilde{P}_{k-1,-m+1}^{(\ell+1)}+s_{m}\tilde{h}_{\ell+1,m}\tilde{P}_{k-1,-m-1}^{(\ell+1)}\right)\right)\right.\\
&-\left.\zeta p_{k}^{(\ell+1)}\left(n_{3}^{i}h_{\ell+1,m}\tilde{P}_{k,m}^{(\ell+1)}+n_{1}^{i}\frac{1}{2}\left(\check{s}_{m}\tilde{h}_{\ell+1,-m}\tilde{P}_{k,m-1}^{(\ell+1)}
-\hat{s}_{m}\tilde{h}_{\ell+1,m}\tilde{P}_{k,m+1}^{(\ell+1)}\right)\right.\right.\\
&-\left.\left.n_{2}^{i}\frac{1}{2}\left(\tilde{s}_{m}\tilde{h}_{\ell+1,-m}\tilde{P}_{k,-m+1}^{(\ell+1)}+s_{m}\tilde{h}_{\ell+1,m}\tilde{P}_{k,-m-1}^{(\ell+1)}\right)\right)\right.\\
&-\left.\frac{2\ell+1}{2\ell-1}\left((k+2\ell+1)\tilde{p}_{k}^{(\ell)}-\zeta q_{k}^{(\ell)}\right)\right.\\
&\left.\left(n_{3}^{i}h_{\ell,m}\tilde{P}_{k+1,m}^{(\ell-1)}-n_{1}^{i}\left(\check{s}_{m}\tilde{h}_{\ell-1,m}\tilde{P}_{k+1,m-1}^{(\ell-1)}
-\hat{s}_{m}\tilde{h}_{\ell-1,-m}\tilde{P}_{k+1,m+1}^{(\ell-1)}\right)\right.\right.\\
&+\left.\left.n_{2}^{i}\left(\tilde{s}_{m}\tilde{h}_{\ell-1,m}\tilde{P}_{k+1,-m+1}^{(\ell-1)}
+s_{m}\tilde{h}_{\ell-1,-m}\tilde{P}_{k+1,-m-1}^{(\ell-1)}\right)\right)\right.\\
&+\frac{2\ell+1}{2\ell-1}\left.\zeta r_{k+1}^{(\ell)}
\left(n_{3}^{i}h_{\ell,m}\tilde{P}_{k+2,m}^{(\ell-1)}-n_{1}^{i}\left(\check{s}_{m}\tilde{h}_{\ell-1,m}\tilde{P}_{k+2,m-1}^{(\ell-1)}
-\hat{s}_{m}\tilde{h}_{\ell-1,-m}\tilde{P}_{k+2,m+1}^{(\ell-1)}\right)\right.\right.\\
&+\left.\left.n_{2}^{i}\left(\tilde{s}_{m}\tilde{h}_{\ell-1,m}\tilde{P}_{k+2,-m+1}^{(\ell-1)}
+s_{m}\tilde{h}_{\ell-1,-m}\tilde{P}_{k+2,-m-1}^{(\ell-1)}\right)\right)\right]\\
&+\frac{\partial u_{i}}{\partial s}U^{0}(U^{0}+1)^{-1}\left[-m(U^{2}\delta_{1,i}-U^{1}\delta_{2,i})\tilde{P}_{k,-m}^{(\ell)}\right.\\
&\left.+\frac{1}{2}(\delta_{1,i}U^{3}-\delta_{3,i}U^{1})
\left(\check{s}_{m}\hat{h}_{\ell,m}\tilde{P}_{k,m-1}^{(\ell)}[\vec{u},\theta]-\hat{s}_{m}\hat{h}_{\ell,-m}\tilde{P}_{k,m+1}^{(\ell)}\right)\right.\\
&\left.-\frac{1}{2}(\delta_{2,i}U^{3}-\delta_{3,i}U^{2})
\left(\tilde{s}_{m}\hat{h}_{\ell,m}\tilde{P}_{k,-m+1}^{(\ell)}[\vec{u},\theta]+s_{m}\hat{h}_{\ell,-m}\tilde{P}_{k,-m-1}^{(\ell)}\right)\right].
\end{align*}
for $s=x^{\alpha}$.
It indicates that $\frac{\partial \tilde{P}_{k,m}^{(\ell)}[\vec{u},\theta]}{\partial s}$
$\in \mathbb{H}^{g^{(0)}_{[\vec{u},\theta]}}_{k+\ell+1}$.
\end{lemma}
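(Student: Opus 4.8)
The plan is to exploit that $\tilde{P}_{k,m}^{(\ell)}[\vec{u},\theta]$ depends on the independent variable $s=x^{\alpha}$ only through the macroscopic fields $\vec{u}(x)$ and $\theta(x)$, while the momentum $\vec{p}$ is an independent integration variable. Hence the chain rule gives
\[
\frac{\partial \tilde{P}_{k,m}^{(\ell)}[\vec{u},\theta]}{\partial s}
=\frac{\partial \theta}{\partial s}\,\frac{\partial \tilde{P}_{k,m}^{(\ell)}}{\partial \theta}
+\sum_{i=1}^{3}\frac{\partial u_{i}}{\partial s}\,\frac{\partial \tilde{P}_{k,m}^{(\ell)}}{\partial u_{i}},
\]
so it suffices to compute the two intrinsic derivatives $\partial_{\theta}\tilde{P}_{k,m}^{(\ell)}$ and $\partial_{u_{i}}\tilde{P}_{k,m}^{(\ell)}$ separately (with $\vec{p}$ fixed) and to re-expand each in the orthonormal basis of Lemma \ref{lem:basis}. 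The two summands are exactly the two blocks in the claimed identity.

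First I would treat the $\theta$-derivative, which is the easy block. Since $E$, $y$ and $\phi$ do not depend on $\theta$, only the explicit parameter dependence of $g^{(0)}_{[\vec{u},\theta]}$ in \eqref{eq:equm1-zzzzz} and of $P_{k}^{(\ell)}(E;\zeta)$ is active. Writing $\zeta=\theta^{-1}$ so that $\partial_{\theta}=-\zeta^{2}\partial_{\zeta}$, and repeating the computation in the proof of Theorem \ref{thm:derivezeta} with the Bessel recurrence \eqref{eq:besselrec}, one gets $\partial_{\zeta}g^{(0)}_{[\vec{u},\theta]}=(G(\zeta)-\zeta^{-1}-E)g^{(0)}_{[\vec{u},\theta]}$. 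Combining this with Theorem \ref{thm:derivezeta} for $\partial_{\zeta}P_{k}^{(\ell)}$ and using the three-term recurrence \eqref{eq:recP0P1} to eliminate $EP_{k}^{(\ell)}$, the $(G-\zeta^{-1})$ contributions cancel and leave precisely $-\zeta^{2}\big(\tfrac{1}{2}(G-\zeta^{-1}-b_{k}^{(\ell)})\tilde{P}_{k,m}^{(\ell)}-a_{k}^{(\ell)}\tilde{P}_{k+1,m}^{(\ell)}\big)$, i.e. the first line of the statement.

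The heart of the proof is the $u_{i}$-derivative. Here $\vec{u}$ enters through $E=U_{\alpha}p^{\alpha}$, through the tetrad $n_{k}^{\alpha}$ in \eqref{eq:ni}, and hence through the angular coordinates $(y,\phi)$ fixed by \eqref{eq:yphi}. Differentiating \eqref{eq:pdec}, \eqref{eq:yphi} and \eqref{eq:ni} at fixed $\vec{p}$, I would first produce closed forms for $\partial_{u_{i}}E$, $\partial_{u_{i}}y$ and $\partial_{u_{i}}\phi$ in terms of $U^{0}$, the frame components $n_{k}^{i}$ and the angular functions $I_{j},\tilde{I}_{j},\hat{I}_{j}$ of Theorem \ref{lem:recder}. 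Substituting these into
\[
\partial_{u_{i}}\tilde{P}_{k,m}^{(\ell)}
=(\partial_{u_{i}}E)\,\partial_{E}\!\big[g^{(0)}_{[\vec{u},\theta]}P_{k}^{(\ell)}(E^{2}-1)^{\ell/2}\big]Y_{\ell,m}
+g^{(0)}_{[\vec{u},\theta]}P_{k}^{(\ell)}(E^{2}-1)^{\ell/2}\big((\partial_{u_{i}}y)\partial_{y}+(\partial_{u_{i}}\phi)\partial_{\phi}\big)Y_{\ell,m},
\]
I would evaluate the radial factor with $\partial_{E}g^{(0)}_{[\vec{u},\theta]}=-\zeta g^{(0)}_{[\vec{u},\theta]}$ together with the $x$-derivative relations \eqref{eq:derivePn0x}--\eqref{eq:derivePn1x}, which shift the polynomial index to $\ell\pm1$, while the angular factor is handled by the harmonic derivative relations \eqref{eq:reccc0}--\eqref{eq:reccc4}; the $I_{j},\tilde{I}_{j},\hat{I}_{j}$ coefficients in Theorem \ref{lem:recder} are tailored precisely so that the combination $(\partial_{u_{i}}y)\partial_{y}+(\partial_{u_{i}}\phi)\partial_{\phi}$ acting on $Y_{\ell,m}$ matches their left-hand sides. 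The contributions that change $\ell$ then assemble, through the spherical-harmonic recurrences \eqref{eq:recc0}--\eqref{eq:recc2} and the mixed polynomial recurrences of Theorem \ref{thm:huxiang}, into the block with prefactor $U^{0}g^{(0)}_{[\vec{u},\theta]}$ (the $\tilde{P}^{(\ell+1)}$ and $\tilde{P}^{(\ell-1)}$ terms), whereas the rotational part of $\partial_{u_{i}}n_{k}^{\alpha}$ --- which carries the factor $(U^{0}-1)/((U^{0})^{2}-1)=(U^{0}+1)^{-1}$ --- produces, via \eqref{eq:reccc3}--\eqref{eq:reccc4}, the same-$\ell$ block with prefactor $U^{0}(U^{0}+1)^{-1}$.

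The final membership assertion then follows by inspection: by Lemma \ref{lem:basis} each $\tilde{P}_{k',m'}^{(\ell')}$ lies in $\mathbb{H}^{g^{(0)}_{[\vec{u},\theta]}}_{k'+\ell'}$, and every index pair that appears --- $\tilde{P}_{k+1,m}^{(\ell)}$, $\tilde{P}_{k,m\pm1}^{(\ell+1)}$, $\tilde{P}_{k-1,m}^{(\ell+1)}$, $\tilde{P}_{k+2,m}^{(\ell-1)}$, and the same-$\ell$ terms --- satisfies $k'+\ell'\le k+\ell+1$, so $\partial_{s}\tilde{P}_{k,m}^{(\ell)}\in\mathbb{H}^{g^{(0)}_{[\vec{u},\theta]}}_{k+\ell+1}$. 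The hard part will be the $u_{i}$-computation: extracting clean expressions for $\partial_{u_{i}}y$ and $\partial_{u_{i}}\phi$ from the implicit relation \eqref{eq:yphi}, and, above all, the long bookkeeping of pairing each polynomial relation of Theorem \ref{thm:derivex} with the matching harmonic relation of Theorem \ref{lem:recder} so that the terms collapse into the compact blocks with exactly the coefficients $\frac{2\ell+1}{2\ell+3}\frac{k}{\tilde{p}_{k-1}^{(\ell+1)}}-\zeta q_{k-1}^{(\ell+1)}$, $\zeta p_{k}^{(\ell+1)}$, $\frac{2\ell+1}{2\ell-1}\big((k+2\ell+1)\tilde{p}_{k}^{(\ell)}-\zeta q_{k}^{(\ell)}\big)$ and $\frac{2\ell+1}{2\ell-1}\zeta r_{k+1}^{(\ell)}$ shown in the statement.
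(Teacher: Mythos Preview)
Your proposal is correct and follows essentially the same route as the paper: chain rule through the intermediate variables $E$, $y$, $\phi$, with the $\theta$-block handled by Theorem~\ref{thm:derivezeta} plus the recurrence \eqref{eq:recP0P1}, and the $u_i$-block handled by first deriving $\partial_{u_i}E$, $\partial_{u_i}y$, $\partial_{u_i}\phi$ from \eqref{eq:pdec}--\eqref{eq:yphi} and then feeding them into the polynomial derivative relations \eqref{eq:derivePn0x}--\eqref{eq:derivePn1x} and the spherical-harmonic identities of Theorem~\ref{lem:recder}. The only cosmetic difference is that the paper groups the factor $(E^{2}-1)^{\ell/2}$ with $Y_{\ell,m}$ (differentiating $\tilde{Y}_{\ell,m}$ as a whole) rather than with the radial factor as you do, which shifts where the $\ell E(E^{2}-1)^{(\ell-2)/2}$ term appears in the intermediate calculation but leads to the same final assembly.
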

\begin{proof}
	For $s=t$ and $x$,
it is clear to have
\[
\frac{\partial U^{\alpha}}{\partial s}=\frac{\partial u_{i}}{\partial s}U^{0}(U^{i}U^{\alpha}+\delta_{i,\alpha}),
\quad \frac{\partial U_{\alpha}}{\partial s}U^{\alpha}=0,\quad
\frac{\partial n_{i}^{\alpha}}{\partial s}n_{i}^{\beta}g_{\alpha\beta}=0,\quad,
\frac{\partial n_{i}^{\alpha}}{\partial s}U_{\alpha}+\frac{\partial U_{\alpha}}{\partial s}n_{i}^{\alpha}=0.
\]
Thus one has
\begin{align*}
\frac{\partial E}{\partial s}=&
\frac{\partial u_{i}}{\partial s}U^{0}(U^{i}U_{\alpha}-\delta_{i,\alpha})p^{\alpha}
=-\frac{\partial u_{i}}{\partial s}U^{0}\sqrt{E^2-1}\left(n_{1}^{i}I_{1}+n_{2}^{i}I_{2}
+n_{3}^{i}I_{0}\right),\\
\frac{\partial y}{\partial s}=&-yE(E^2-1)^{-1}\frac{\partial E}{\partial s}-\frac{\partial n_{3}^{\alpha}}{\partial s}p_{\alpha}(E^2-1)^{-\frac{1}{2}}\\
=&\frac{\partial u_{i}}{\partial s}U^{0}E(E^2-1)^{-\frac{1}{2}}\left(n_{1}^{i}\tilde{I}_{1}+n_{2}^{i}\tilde{I}_{2}
+n_{3}^{i}\tilde{I}_{0}\right)-\frac{\partial n_{3}^{\alpha}}{\partial s}g_{\alpha\beta}(n_{1}^{\beta}I_{1}+n_{2}^{\beta}I_{2})\\
=&\frac{\partial u_{i}}{\partial s}U^{0}
\left(\frac{E}{\sqrt{E^2-1}}(n_{1}^{i}\tilde{I}_{1}+n_{2}^{i}\tilde{I}_{2}+n_{3}^{i}\tilde{I}_{0})
+\frac{(\delta_{1,i}U^{3}-\delta_{3,i}U^{1})I_{1}+
(\delta_{2,i}U^{3}-\delta_{3,i}U^{2})I_{2}}{U^{0}+1}\right),\\
\frac{\partial \phi}{\partial s}=&-(\frac{\partial n_{2}^{\alpha}}{\partial s}p_{\alpha}\frac{\cos\phi}{\sqrt{1-y^2}\sqrt{E^2-1}}-\frac{\partial n_{1}^{\alpha}}{\partial s}p_{\alpha}\frac{\sin\phi}{\sqrt{1-y^2}\sqrt{E^2-1}})\\
=&\frac{\partial u_{i}}{\partial s}U^{0}\left(\frac{E}{
\sqrt{E^2-1}}(n_{1}^{i}\hat{I}_{1}+n_{2}^{i}\hat{I}_{2})\right)
+g_{\alpha\beta}\left(\frac{\partial n_{2}^{\alpha}}{\partial s}(n_{3}^{\beta}\hat{I}_{4}-n_{1}^{\beta})
+\frac{\partial n_{1}^{\alpha}}{\partial s}n_{3}^{\beta}\hat{I}_{3}\right)\\
=&\frac{\partial u_{i}}{\partial s}
U^{0}\left(\frac{E}{
\sqrt{E^2-1}}(n_{1}^{i}\hat{I}_{1}+n_{2}^{i}\hat{I}_{2})+\frac{U^{2}\delta_{1,i}-U^{1}\delta_{2,i}+
(\delta_{1,i}U^{3}-\delta_{3,i}U^{1})\hat{I}_{3}+(\delta_{2,i}U^{3}-\delta_{3,i}U^{2})\hat{I}_{4}}{U^{0}+1}\right).
\end{align*}

Using  those above identities and  \eqref{eq:equm1-zzzzz}  gives
\[
\frac{\partial g^{(0)}_{[\vec{u},\theta]}}{\partial s}=-\left(\frac{\partial \theta}{\partial s}\zeta^2\left(G(\zeta)-\zeta^{-1}-E\right)+\zeta\frac{\partial E}{\partial s}\right) g^{(0)}_{[\vec{u},\theta]}.
\]
Using Lemma \ref{lem:recder} gives
\begin{align*}
\frac{\partial \tilde{Y}_{\ell,m}}{\partial s}
=&\frac{\partial y}{\partial s}\frac{d}{dy}Y_{\ell,m}(E^2-1)^{\frac{\ell}{2}}-mY_{\ell,-m}(E^2-1)^{\frac{\ell}{2}}\frac{\partial \phi}{\partial s}+\ell EY_{\ell,m}(E^2-1)^{\frac{\ell-2}{2}}\frac{\partial E}{\partial s},\\
=&\frac{\partial u_{i}}{\partial s}U^{0}E(E^2-1)^{\frac{\ell-1}{2}}n_{3}^{i}\left(\tilde{I}_{0}\frac{d}{dy}Y_{\ell,m}-
\ell I_{0}Y_{\ell,m}\right)\\
+&\frac{\partial u_{i}}{\partial s}U^{0}E(E^2-1)^{\frac{\ell-1}{2}}n_{1}^{i}\left(\tilde{I}_{1}\frac{d}{dy}Y_{\ell,m}-\ell I_{1}Y_{\ell,m}-m\hat{I}_{1}Y_{\ell,-m}\right)\\
+&\frac{\partial u_{i}}{\partial s}U^{0}E(E^2-1)^{\frac{\ell-1}{2}}n_{2}^{i}\left(\tilde{I}_{2}\frac{d}{dy}Y_{\ell,m}-\ell I_{2}Y_{\ell,m}-m\hat{I}_{2}Y_{\ell,-m}\right)\\
+&\frac{\partial u_{i}}{\partial s}U^{0}\frac{\delta_{1,i}U^{3}-\delta_{3,i}U^{1}}{U^{0}+1}
(E^2-1)^{\frac{\ell}{2}}\left(I_{1}\frac{d}{dy}Y_{\ell,m}-m\hat{I}_{3}Y_{\ell,-m}\right)\\
+&\frac{\partial u_{i}}{\partial s}U^{0}\frac{\delta_{2,i}U^{3}-\delta_{3,i}U^{2}}{U^{0}+1}
(E^2-1)^{\frac{\ell}{2}}\left(I_{2}\frac{d}{dy}Y_{\ell,m}-m\hat{I}_{4}Y_{\ell,-m}\right)\\
-&m\frac{\partial u_{i}}{\partial s}U^{0}\frac{U^{2}\delta_{1,i}-U^{1}\delta_{2,i}}{U^{0}+1}\tilde{Y}_{\ell,-m}\\
=&-\frac{\partial u_{i}}{\partial s}U^{0}n_{3}^{i}h_{\ell,m}E\tilde{Y}_{\ell-1,m}\\
&+\frac{1}{2}\frac{\partial u_{i}}{\partial s}U^{0}n_{1}^{i}E\left(\check{s}_{m}\tilde{h}_{\ell-1,m}\tilde{Y}_{\ell-1,m-1}
-\hat{s}_{m}\tilde{h}_{\ell-1,-m}\tilde{Y}_{\ell-1,m+1}\right)\\
&-\frac{1}{2}\frac{\partial u_{i}}{\partial s}U^{0}n_{2}^{i}E\left(\tilde{s}_{m}\tilde{h}_{\ell-1,m}\tilde{Y}_{\ell-1,-m+1}
+s_{m}\tilde{h}_{\ell-1,-m}\tilde{Y}_{\ell-1,-m-1}\right)\\
&+\frac{1}{2}\frac{\partial u_{i}}{\partial s}U^{0}\frac{\delta_{1,i}U^{3}-\delta_{3,i}U^{1}}{U^{0}+1}
\left(\check{s}_{m}\hat{h}_{\ell,m}\tilde{Y}_{\ell,m-1}-\hat{s}_{m}\hat{h}_{\ell,-m}\tilde{Y}_{\ell,m+1}\right)\\
&-\frac{1}{2}\frac{\partial u_{i}}{\partial s}U^{0}\frac{\delta_{2,i}U^{3}-\delta_{3,i}U^{2}}{U^{0}+1}
\left(\tilde{s}_{m}\hat{h}_{\ell,m}\tilde{Y}_{\ell,-m+1}+s_{m}\hat{h}_{\ell,-m}\tilde{Y}_{\ell,-m-1}\right)\\
&-m\frac{\partial u_{i}}{\partial s}U^{0}\frac{U^{2}\delta_{1,i}-U^{1}\delta_{2,i}}{U^{0}+1}\tilde{Y}_{\ell,-m},
\end{align*}
and
\begin{align*}
\frac{\partial P_{k}^{(\ell)}}{\partial s}=&-\zeta^2\frac{\partial P_{k}^{(\ell)}}{\partial \zeta}\frac{\partial \theta}{\partial s}+\frac{\partial P_{k}^{(\ell)}}{\partial E}\frac{\partial E}{\partial s}\\
=&-\zeta^2\left(a_{k-1}^{(\ell)}P_{k-1}^{(\ell)}-\frac{1}{2}\left(G(\zeta)-\zeta^{-1}-b_{k}^{(\ell)}\right)P_{k}^{(\ell)}\right)\frac{\partial \theta}{\partial s}\\
&+\left(\frac{2\ell+1}{2\ell+3}\frac{k}{\tilde{p}_{k-1}^{(\ell+1)}}P_{k-1}^{(\ell+1)}+\zeta r_{k-1}^{(\ell+1)}P_{k-2}^{(\ell+1)}\right)\frac{\partial E}{\partial s}\\
=&-\zeta^2\left(a_{k-1}^{(\ell)}P_{k-1}^{(\ell)}-\frac{1}{2}\left(G(\zeta)-\zeta^{-1}-b_{k}^{(\ell)}\right)P_{k}^{(\ell)}\right)\frac{\partial \theta}{\partial s}\\
&+(E^2-1)^{-1}\left(\frac{2\ell+1}{2\ell-1}\left((k+2\ell+1)\tilde{p}_{k}^{(\ell)}P_{k+1}^{(\ell-1)}+
     \zeta p_{k}^{(\ell)} P_{k}^{(\ell-1)}\right)-(2\ell+1)EP_{k}^{(\ell)}\right)\frac{\partial E}{\partial s}.
\end{align*}
Combining \eqref{eq:recP0P1} and \eqref{eq:derivePn0x} with \eqref{eq:derivePn1x}
yields
\begin{align*}
\frac{\partial P_{k}^{(\ell)}g^{(0)}_{[\vec{u},\theta]}}{\partial s}\tilde{Y}_{\ell,m}
=&-\frac{\partial \theta}{\partial s}\zeta^2\left(\frac{1}{2}\left(G(\zeta)-\zeta^{-1}-b_{k}^{(\ell)}\right)\tilde{P}_{k,m}^{(\ell)}[\vec{u},\theta]
-a_{k}^{(\ell)}\tilde{P}_{k+1,m}^{(\ell)}[\vec{u},\theta]\right)\\
&+\frac{\partial E}{\partial s}g^{(0)}_{[\vec{u},\theta]}(E^2-1)^{-1}\left(\frac{2\ell+1}{2\ell-1}\left(\left((k+2\ell+1)\tilde{p}_{k}^{(\ell)}-\zeta q_{k}^{(\ell)}\right)P_{k+1}^{(\ell-1)}
     -\zeta r_{k+1}^{(\ell)} P_{k+2}^{(\ell-1)}\right)\right.\\
&\left.-(2\ell+1)EP_{k}^{(\ell)}\right)\tilde{Y}_{\ell,m}\\
=&-\frac{\partial \theta}{\partial s}\zeta^2\left(\left(G(\zeta)-\zeta^{-1}-b_{k}^{(\ell)}\right)\tilde{P}_{k,m}^{(\ell)}[\vec{u},\theta]
-a_{k}^{(\ell)}\tilde{P}_{k+1,m}^{(\ell)}[\vec{u},\theta]\right)\\
&+\frac{\partial E}{\partial s}g^{(0)}_{[\vec{u},\theta]}\left(\left(\frac{2\ell+1}{2\ell+3}\frac{k}{\tilde{p}_{k-1}^{(\ell+1)}}-\zeta q_{k-1}^{(\ell+1)}\right)P_{k-1}^{(\ell+1)}-\zeta p_{k}^{(\ell+1)}P_{k}^{(\ell+1)}\right)\tilde{Y}_{\ell,m}.
\end{align*}
The derivation rule of compound function gives
\[
\frac{\partial \tilde{P}_{k,m}^{(\ell)}[\vec{u},\theta]}{\partial s}=\frac{\partial P_{k}^{(\ell)}g^{(0)}_{[\vec{u},\theta]}}{\partial s}\tilde{Y}_{\ell,m}+\frac{\partial \tilde{Y}_{\ell,m}}{\partial s}P_{k}^{(\ell)}g^{(0)}_{[\vec{u},\theta]}.
\]
Combining the above and using Lemma \ref{lem:recder} again can complete the proof.
\qed\end{proof}

\begin{lemma}[Recurrence relations]
\label{lem:rec}
The basis functions $\{\tilde{P}_{k,m}^{(\ell)}[\vec{u},\theta], \ell, m, k\in\mathbb{N},|m|\leq\ell,{k\leq M-\ell} \}$ satisfy the following recurrence relations
\begin{equation}
    \label{eq:rec}
    \begin{aligned}
      p^{\alpha}\vec{\mathcal{P}}_{M}[\vec{u},\theta]=&\vec{M}_{M}^{\alpha}\vec{\mathcal{P}}_{M}[\vec{u},\theta]+
U^{\alpha}\vec{P}_{M}^{p}\vec{e}_{M}^{0}+n_{1}^{\alpha}\vec{P}_{M}^{p}\vec{e}_{M}^{1}+n_{2}^{\alpha}\vec{P}_{M}^{p}\vec{e}_{M}^{2}
++n_{3}^{\alpha}\vec{P}_{M}^{p}\vec{e}_{M}^{3},
    \end{aligned}
    \end{equation}
where
     \begin{equation}
    \label{eq:MtMx}
    \vec{M}_{M}^{\alpha}:=n_{1}^{\alpha}\vec{P}_{M}^{p}\vec{A}_{M}^{1}(\vec{P}_{M}^{p})^{T}+ n_{2}^{\alpha}\vec{P}_{M}^{p}\vec{A}_{M}^{2}(\vec{P}_{M}^{p})^{T}+n_{3}^{\alpha}\vec{P}_{M}^{p}\vec{A}_{M}^{3}(\vec{P}_{M}^{p})^{T}+ U^{\alpha}\vec{P}_{M}^{p}\vec{A}_{M}^{0}(\vec{P}_{M}^{p})^{T} ,
    \end{equation}
in which   $\vec{P}_{M}^{p}$ is a permutation matrix  satisfying
     \begin{equation}
     \label{eq:MtMx-22222}
    \vec{P}_{M}^{p}\vec{\mathcal{\tilde{P}}}_{M}[\vec{u},\theta]=\vec{\mathcal{P}}_{M}[\vec{u},\theta], \quad {\vec{P}}_{M}^{p}({\vec{P}}_{M}^{p})^{T}=({\vec{P}}_{M}^{p})^{T}{\vec{P}}_{M}^{p}
    ={\vec{I}},
    \end{equation}
with
\begin{align*}
\vec{\mathcal{\tilde{P}}}_{M}[\vec{u},\theta]:&=(\tilde{P}_{0,0}^{(0)},\cdots,\tilde{P}_{M,0}^{(0)},
\tilde{P}_{0,-1}^{(1)},\cdots,\tilde{P}_{M-1,-1}^{(1)},
\cdots,\tilde{P}_{0,1}^{(1)},
\cdots,\tilde{P}_{M-1,1}^{(1)},
\cdots,\tilde{P}_{0,-M}^{(M)},\cdots,\tilde{P}_{0,M}^{(M)} )^{T}.
\end{align*}
Moreover, $\vec{A}_{M}^{\alpha}$ is a partitioned matrix such as
\[
\vec{A}_{M}^{\alpha}=\begin{pmatrix}
\begin{smallmatrix}
\vec{A}_{M}^{\alpha}[(0,0),(0,0)]& \vec{A}_{M}^{\alpha}[(0,0),(-1,1)]&\vec{A}_{M}^{\alpha}[(0,0),(0,1)]&\vec{A}_{M}^{\alpha}[(0,0),(1,1)]
&\cdots&\vec{A}_{M}^{\alpha}[(0,0),(-M,M)]&\cdots&\vec{A}_{M}^{\alpha}[(0,0),(M,M)]\\
\vec{A}_{M}^{\alpha}[(-1,1),(0,0)]& \vdots&\vdots&\vdots
&\vdots&\vdots&\cdots&\vec{A}_{M}^{\alpha}[(-1,1),(M,M)]\\
\vdots& \vdots&\vdots&\vdots
&\vdots&\vdots&\cdots&\vdots\\
\vec{A}_{M}^{\alpha}[(M,M),(0,0)]& \cdots&\cdots&\cdots
&\cdots&\cdots&\cdots&\vec{A}_{M}^{\alpha}[(M,M),(M,M)]
\end{smallmatrix}
\end{pmatrix},
\]
where
\[
\vec{A}_{M}^{0}[(m,\ell),(m,\ell)]=\vec{J}_{M-\ell}^{(\ell)},
\]
and
\begin{align*}
\vec{A}_{M}^{1}[(m,\ell-1),(m-1,\ell)]=\frac{(\vec{\tilde{J}}_{M-\ell}^{(\ell)})^{T}\check{s}_{m}\tilde{h}_{\ell,-m}}{2(2\ell-1)}, &\quad
\vec{A}_{M}^{1}[(m,\ell-1),(m+1,\ell)]=\frac{(\vec{\tilde{J}}_{M-\ell}^{(\ell)})^{T}\hat{s}_{m}\tilde{h}_{\ell,m}}{-2(2\ell-1)}, \\
\vec{A}_{M}^{1}[(m,\ell),(m-1,\ell-1)]=\frac{\vec{\tilde{J}}_{M-\ell}^{(\ell)}\check{s}_{m}\tilde{h}_{\ell-1,m}}{-2(2\ell-1)},&\quad
\vec{A}_{M}^{1}[(m,\ell),(m+1,\ell-1)]=\frac{\vec{\tilde{J}}_{M-\ell}^{(\ell)}\hat{s}_{m}\tilde{h}_{\ell-1,-m}}{2(2\ell-1)},\\
\vec{A}_{M}^{2}[(m,\ell-1),(-m-1,\ell)]=\frac{(\vec{\tilde{J}}_{M-\ell}^{(\ell)})^{T}_{m}s_{m}\tilde{h}_{\ell,-m}}{-2(2\ell-1)}, &\quad
\vec{A}_{M}^{2}[(m,\ell-1),(-m+1,\ell)]=\frac{(\vec{\tilde{J}}_{M-\ell}^{(\ell)})^{T}\tilde{s}_{m}\tilde{h}_{\ell,-m}}{-2(2\ell-1)}, \\
\vec{A}_{M}^{2}[(m,\ell),(-m-1,\ell-1)]=\frac{\vec{\tilde{J}}_{M-\ell}^{(\ell)}s_{m}\tilde{h}_{\ell-1,-m}}{2(2\ell-1)},&\quad
\vec{A}_{M}^{2}[(m,\ell),(-m+1,\ell-1)]=\frac{\vec{\tilde{J}}_{M-\ell}^{(\ell)}\tilde{s}_{m}\tilde{h}_{\ell-1,m}}{2(2\ell-1)},\\
\vec{A}_{M}^{3}[(m,\ell-1),(m,\ell)]=\frac{(\vec{\tilde{J}}_{M-\ell}^{(\ell)})^{T}h_{\ell,m}}{2\ell-1},& \quad
\vec{A}_{M}^{3}[(m,\ell),(m,\ell-1)]=\frac{\vec{\tilde{J}}_{M-\ell}^{(\ell)}h_{\ell,m}}{2\ell-1},
\end{align*}
with other blocks are zero matrices.

The vector $\vec{e}_{M}^{\alpha}$ is   partitioned as follows
\[
\vec{e}_{M}^{\alpha}=\left(\vec{e}_{M}^{\alpha}(0,0)^{T},\cdots,\vec{e}_{M}^{\alpha}(-M,M),\cdots,\vec{e}_{M}^{\alpha}(M,M)\right)^{T},
\]
where
\[
\vec{e}_{M}^{0}(m,\ell)=a_{M-\ell}^{(\ell)}\tilde{P}_{M-\ell+1,m}^{(\ell)}\vec{e}_{M-\ell+1},
\]
\begin{align*}
\vec{e}_{M}^{1}(m,\ell)=&\frac{p_{M-\ell}^{(\ell+1)}(\check{s}_{m}\tilde{h}_{\ell+1,-m}\tilde{P}_{M-\ell,m-1}^{(\ell+1)}
-\hat{s}_{m}\tilde{h}_{\ell+1,m}\tilde{P}_{M-\ell,m+1}^{(\ell+1)})}{2(2\ell+1)}\vec{e}_{M-\ell+1}\\
&+\frac{r_{M-\ell+1}^{(\ell)}(\hat{s}_{m}\tilde{h}_{\ell-1,-m}\tilde{P}_{M-\ell+2,m+1}^{(\ell-1)}
-\check{s}_{m}\tilde{h}_{\ell-1,m}\tilde{P}_{M-\ell+2,m-1}^{(\ell-1)})
}{2(2\ell+1)}\vec{e}_{M-\ell+1},\\
\vec{e}_{M}^{2}(m,\ell)=&-\frac{p_{M-\ell}^{(\ell+1)}(\tilde{s}_{m}\tilde{h}_{\ell+1,-m}\tilde{P}_{M-\ell,-m+1}^{(\ell+1)}
+s_{m}\tilde{h}_{\ell+1,m}\tilde{P}_{M-\ell,-m-1}^{(\ell+1)})}{2(2\ell+1)}\vec{e}_{M-\ell+1}\\
&+\frac{r_{M-\ell+1}^{(\ell)}(\hat{s}_{m}\tilde{h}_{\ell-1,-m}\tilde{P}_{M-\ell+2,-m-1}^{(\ell-1)}
+\check{s}_{m}\tilde{h}_{\ell-1,m}\tilde{P}_{M-\ell+2,-m+1}^{(\ell-1)})
}{2(2\ell+1)}\vec{e}_{M-\ell+1},\\
\vec{e}_{M}^{3}(m,\ell)=&\frac{p_{M-\ell}^{(\ell+1)}h_{\ell+1,m}\tilde{P}_{M-\ell,m}^{(\ell+1)}
+r_{M-\ell+1}^{(\ell)}h_{\ell,m}\tilde{P}_{M-\ell+2,m}^{(\ell-1)}}{2\ell+1}\vec{e}_{M-\ell+1},
\end{align*}
and
$\vec{e}_{M-\ell+1}$ {is} the last column of the identity matrix of order $(M-\ell+1)$.
\end{lemma}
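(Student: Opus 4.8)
The plan is to compute $p^{\alpha}\tilde{P}_{k,m}^{(\ell)}[\vec{u},\theta]$ for a single basis function and then assemble the results into the matrix form \eqref{eq:rec}. First I would insert the momentum decomposition \eqref{eq:pdec} together with the tetrad expansion of $l^{\alpha}$, writing $p^{\alpha}=U^{\alpha}E+\sqrt{E^2-1}\,(I_{1}n_{1}^{\alpha}+I_{2}n_{2}^{\alpha}+I_{0}n_{3}^{\alpha})$, where $I_{0},I_{1},I_{2}$ are the functions of $(y,\phi)$ introduced in Theorem \ref{lem:recder}. Multiplying $\tilde{P}_{k,m}^{(\ell)}=g^{(0)}_{[\vec{u},\theta]}P_{k}^{(\ell)}(E;\theta^{-1})\tilde{Y}_{\ell,m}$ by this expression splits the product into one term proportional to $U^{\alpha}$, carrying the factor $E\,P_{k}^{(\ell)}\tilde{Y}_{\ell,m}$, and three terms proportional to $n_{1}^{\alpha},n_{2}^{\alpha},n_{3}^{\alpha}$, each carrying $\sqrt{E^2-1}\,I_{j}\,P_{k}^{(\ell)}\tilde{Y}_{\ell,m}$. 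This is precisely the decomposition already carried out in the induction step of the proof of Lemma \ref{lem:basis}, so the bulk of the algebra is available for reuse.

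The key step is to reduce each of the four factors back to the orthonormal basis. For the angular part I would apply the spherical-harmonic recurrences \eqref{eq:recc0}--\eqref{eq:recc2}: since $\sqrt{E^2-1}\,I_{j}\,\tilde{Y}_{\ell,m}$ raises the power $(E^2-1)^{\ell/2}$ to $(E^2-1)^{(\ell+1)/2}$, these relations turn it into a combination of $\tilde{Y}_{\ell+1,m'}$ and $(E^2-1)\tilde{Y}_{\ell-1,m'}$, producing exactly the coefficients $h_{\ell\pm1,m}$, $\tilde{h}_{\ell\pm1,\pm m}$ and the sign symbols $s_{m},\tilde{s}_{m},\hat{s}_{m},\check{s}_{m}$. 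For the radial part I would then re-expand the accompanying polynomial factor: $E\,P_{k}^{(\ell)}$ through the three-term recurrence \eqref{eq:recP0P1}, whose coefficients fill the tridiagonal matrix $\vec{J}_{M-\ell}^{(\ell)}=\vec{A}_{M}^{0}[(m,\ell),(m,\ell)]$; the factor $P_{k}^{(\ell)}$ paired with an up-shift $\ell\mapsto\ell+1$ through \eqref{eq:recP10}; and $(E^2-1)P_{k}^{(\ell)}$ paired with a down-shift $\ell\mapsto\ell-1$ through \eqref{eq:recP01}. The coefficients of the latter two assemble into the banded rectangular blocks $\vec{\tilde{J}}_{M-\ell}^{(\ell)}$ and its transpose appearing in $\vec{A}_{M}^{1},\vec{A}_{M}^{2},\vec{A}_{M}^{3}$. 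Each of these is a known identity from Theorems \ref{thm:rec} and \ref{thm:huxiang}, so no new orthogonality computation is required.

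The remaining work is organizational. I would collect all terms whose total degree $k'+\ell'$ stays $\le M$ into the matrix $\vec{M}_{M}^{\alpha}$ of \eqref{eq:MtMx}, and the terms at the truncation boundary $k=M-\ell$ --- those that push $P^{(\ell)}$ to degree $M-\ell+1$ or map into the $(\ell\pm1)$-families at their highest admissible index --- into the vectors $\vec{e}_{M}^{\alpha}$. Reading off the block positions $[(m,\ell),(m',\ell')]$ and matching them against the recurrence coefficients then confirms the stated entries of $\vec{A}_{M}^{0},\dots,\vec{A}_{M}^{3}$ and of $\vec{e}_{M}^{0},\dots,\vec{e}_{M}^{3}$. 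Finally, because the computation is naturally indexed by grouping first on $(m,\ell)$ and then on $k$ --- the ordering of $\vec{\mathcal{\tilde{P}}}_{M}[\vec{u},\theta]$ --- whereas $\vec{\mathcal{P}}_{M}[\vec{u},\theta]$ is ordered by total degree, the permutation matrix $\vec{P}_{M}^{p}$ of \eqref{eq:MtMx-22222} intertwines the two orderings and is inserted on the left and right of each $\vec{A}_{M}^{\alpha}$. The main obstacle is not any single identity but the multi-index bookkeeping: tracking the four $I_{j}$-channels, the simultaneous shifts $\ell\mapsto\ell\pm1$ and $m\mapsto m,m\pm1$, and the correct separation of interior from boundary contributions, all while respecting the reordering encoded by $\vec{P}_{M}^{p}$.
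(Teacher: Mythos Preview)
Your proposal is correct and follows essentially the same route as the paper: the paper's proof is a one-line citation of the momentum decomposition \eqref{eq:pdec}, the radial recurrences \eqref{eq:recP0P1mat}, \eqref{eq:recPQ}, \eqref{eq:recQP}, and the angular recurrences \eqref{eq:recc0}--\eqref{eq:recc2}, which is exactly the toolkit you invoke. Your observation that the algebra recycles the induction step of Lemma~\ref{lem:basis} is a helpful organizational remark, and your explicit accounting of the boundary terms and the permutation $\vec{P}_{M}^{p}$ makes transparent what the paper leaves implicit.
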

\begin{proof}
Using the three-term recurrence relations
 \eqref{eq:recP0P1mat}, \eqref{eq:recPQ}, \eqref{eq:recQP}, \eqref{eq:recc0}-\eqref{eq:recc2}, and
 \eqref{eq:pdec} can complete the proof.
\qed\end{proof}

For a finite integer $M\geq1$,  define an operator
$\Pi_{M}[\vec{u},\theta]: \mathbb{H}^{g^{(0)}_{[\vec{u},\theta]}}\rightarrow \mathbb{H}^{g^{(0)}_{[\vec{u},\theta]}}_{M}$
 by
\begin{equation}
\label{EQ-projection-aaaa}
\Pi_{M}[\vec{u},\theta]f:=\sum_{\ell=0}^{M}\sum_{m=-\ell}^{\ell}\sum_{i=0}^{M-\ell}f_{i,m}^{(\ell)}\tilde{P}_{i,m}^{(\ell)}[\vec{u},\theta],
\end{equation}
or  in a compact form
\begin{equation}\label{EQ-projection-aaaa2}
\Pi_{M}[\vec{u},\theta]f=[\vec{\mathcal{P}}_{M}[\vec{u},\theta],\vec{f}_{M}]_{M},
\end{equation}
where
\begin{align}
f_{i,m}^{(\ell)}&=<f,\tilde{P}_{i,m}^{(\ell)}[\vec{u},\theta]>_{g^{(0)}_{[\vec{u},\theta]}},\quad |m|\leq\ell,\quad i\leq M-\ell, \label{eq:deff1}\\
\vec{f}_{M}&=(f_{0,0}^{(0)},f_{1,0}^{(0)},f_{0,-1}^{(1)},
f_{0,0}^{(1)},f_{0,1}^{(1)},\cdots,f_{M,0}^{(0)},f_{M-1,-1}^{(1)},
f_{M-1,0}^{(1)},f_{M-1,1}^{(1)},\cdots,f_{0,-M}^{(M)},\cdots,f_{0,M}^{(M)} )^{T}.
\label{EQ-projection-bbbb}
\end{align}
and the symbol $[\cdot,\cdot]_{M}$ denotes the common inner product of two $N_{M}$-dimensional vectors.

\begin{lemma}
\label{lem:project}
The operator	$\Pi_{M}[\vec{u},\theta]$ is
 linear bounded and projection operator in sense that
\begin{description}
	\item[(i)]   $\Pi_{M}[\vec{u},\theta]f  \in\mathbb{H}^{g^{(0)}_{[\vec{u},\theta]}}_{M}$ for all
	$f\in\mathbb{H}^{g^{(0)}_{[\vec{u},\theta]}}$,
   \item[(ii)] $\Pi_{M}[\vec{u},\theta]f=f$ for all $f\in\mathbb{H}^{g^{(0)}_{[\vec{u},\theta]}}_{M}$.
\end{description}
\end{lemma}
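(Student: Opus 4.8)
The plan is to treat $\Pi_{M}[\vec{u},\theta]$ as nothing more than the orthogonal projection onto the finite-dimensional subspace $\mathbb{H}^{g^{(0)}_{[\vec{u},\theta]}}_{M}$, exploiting that Lemma~\ref{lem:basis} already furnishes an orthonormal basis, namely the components $\tilde{P}_{i,m}^{(\ell)}[\vec{u},\theta]$ (with $\ell\leq M$, $|m|\leq\ell$, $i\leq M-\ell$) of $\vec{\mathcal{P}}_{M}[\vec{u},\theta]$ in \eqref{eq:huap}, which satisfy $<\tilde{P}_{i,m}^{(\ell)},\tilde{P}_{j,m'}^{(\ell')}>_{g^{(0)}_{[\vec{u},\theta]}}=\delta_{i,j}\delta_{m,m'}\delta_{\ell,\ell'}$. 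Everything then reduces to the elementary theory of expansions in an orthonormal system, so no new analytic machinery is needed.

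First I would establish linearity together with assertion (i). Linearity is immediate because each coefficient $f_{i,m}^{(\ell)}=<f,\tilde{P}_{i,m}^{(\ell)}[\vec{u},\theta]>_{g^{(0)}_{[\vec{u},\theta]}}$ depends linearly on $f$, the inner product being linear in its first slot; hence $\Pi_{M}[\vec{u},\theta](\alpha f+\beta g)=\alpha\,\Pi_{M}[\vec{u},\theta]f+\beta\,\Pi_{M}[\vec{u},\theta]g$. For (i), observe that by the definition \eqref{EQ-projection-aaaa} the image $\Pi_{M}[\vec{u},\theta]f$ is a finite linear combination of exactly those basis functions whose indices range over $\ell\leq M$, $|m|\leq\ell$, $i\leq M-\ell$, i.e. the components of $\vec{\mathcal{P}}_{M}[\vec{u},\theta]$, which span $\mathbb{H}^{g^{(0)}_{[\vec{u},\theta]}}_{M}$ by Lemma~\ref{lem:basis}; consequently $\Pi_{M}[\vec{u},\theta]f\in\mathbb{H}^{g^{(0)}_{[\vec{u},\theta]}}_{M}$.

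Next I would verify boundedness and assertion (ii). Boundedness follows from Bessel's inequality applied to the orthonormal system: since $\|\Pi_{M}[\vec{u},\theta]f\|^2=\sum_{\ell,m,i}|f_{i,m}^{(\ell)}|^2\leq\|f\|^2$ in the norm induced by $<\cdot,\cdot>_{g^{(0)}_{[\vec{u},\theta]}}$, one obtains $\|\Pi_{M}[\vec{u},\theta]\|\leq 1$. For the reproduction identity (ii), take any $f\in\mathbb{H}^{g^{(0)}_{[\vec{u},\theta]}}_{M}$ and expand it in the basis as $f=\sum_{\ell,m,i}c_{i,m}^{(\ell)}\tilde{P}_{i,m}^{(\ell)}[\vec{u},\theta]$, which is possible precisely because those same components form a basis of $\mathbb{H}^{g^{(0)}_{[\vec{u},\theta]}}_{M}$. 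Pairing with a basis element and invoking orthonormality gives $f_{j,m'}^{(\ell')}=<f,\tilde{P}_{j,m'}^{(\ell')}>_{g^{(0)}_{[\vec{u},\theta]}}=c_{j,m'}^{(\ell')}$, so the Fourier coefficients coincide with the expansion coefficients and hence $\Pi_{M}[\vec{u},\theta]f=f$.

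The argument carries no genuine obstacle once Lemma~\ref{lem:basis} is available; the only point deserving care is to confirm that the triple index set in \eqref{EQ-projection-aaaa} matches exactly the indexing of the orthonormal basis in \eqref{eq:huap}, so that the coefficients $f_{i,m}^{(\ell)}$ are the honest Fourier coefficients and both the inclusion in (i) and the reproduction in (ii) use the full basis without omission or duplication. I would therefore double-check this index bookkeeping before concluding.
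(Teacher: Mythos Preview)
Your proposal is correct and follows essentially the same approach as the paper: both rely on Lemma~\ref{lem:basis} to supply the orthonormal basis, note that linearity, boundedness, and (i) are immediate from the definition, and prove (ii) by expanding $f\in\mathbb{H}^{g^{(0)}_{[\vec{u},\theta]}}_{M}$ in that basis and matching coefficients via orthonormality. The only difference is that you spell out boundedness via Bessel's inequality, whereas the paper simply declares linearity and boundedness to be obvious.
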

\begin{proof}
It is obvious that $\Pi_{M}[\vec{u},\theta]$ is a linear bounded operator and
 $\Pi_{M}[\vec{u},\theta]f\in\mathbb{H}^{g^{(0)}_{[\vec{u},\theta]}}_{M}$ for all $f\in\mathbb{H}^{g^{(0)}_{[\vec{u},\theta]}}$.

For each $f\in\mathbb{H}^{g^{(0)}_{[\vec{u},\theta]}}_{M}$, 
besides \eqref{EQ-projection-aaaa},  one has by using Lemma \ref{lem:basis}
\[
f=\sum_{\ell=0}^{M}\sum_{m=-\ell}^{\ell}\sum_{i=0}^{M-\ell}\tilde{f}_{i,m}^{(\ell)}\tilde{P}_{i,m}^{(\ell)}[\vec{u},\theta].
\]
Taking respectively the inner product with $\tilde{P}_{i,m}^{(\ell)}[\vec{u},\theta]$ from both sides
of the last equation gives
\[
    f_{i,m}^{(\ell)}=<f,\tilde{P}_{i,m}^{(\ell)}[\vec{u},\theta]>_{g^{(0)}_{[\vec{u},\theta]}}=\tilde{f}_{i,m}^{(\ell)}.
\]
The proof is completed.
\qed\end{proof}

\begin{remark}
	 The so-called  Grad type expansion is to expand the distribution function $f(\vec x,\vec p,t)$
	 in the weighted polynomial space $\mathbb{H}^{g^{(0)}_{[\vec{u},\theta]}}$ as follows
	  \[
	  f(\vec x,\vec p,t)=\left[\vec{\mathcal{P}}_{\infty}[\vec{u},\theta],\vec{f}_{\infty}\right]_{\infty},
	  \]
	  where the symbol $[\cdot,\cdot]_{\infty}$ denotes the common inner product of two infinite-dimensional vectors,
	  and $\vec{f}_{\infty}=(f_{0,0}^{(0)},\cdots,f_{M,0}^{(0)},
f_{M-1,-1}^{(1)},f_{M-1,0}^{(1)},f_{M-1,1}^{(1)},\cdots,f_{0,-M}^{(M)},\cdots,f_{0,M}^{(M)},\cdots)^{T}$.
\end{remark}

\subsection{Derivation of the moment model}
\label{subsec:deduction}

Based on the weighted polynomial spaces $\mathbb{H}^{g^{(0)}_{[\vec{u},\theta]}}$
and $\mathbb{H}^{g^{(0)}_{[\vec{u},\theta]}}_M$ in Section \ref{subsec:expand}
and the projection operator  $\Pi_{M}[\vec{u},\theta]$ defined in \eqref{EQ-projection-aaaa},
the moment method by the operator projection \cite{MR:2014,Kuang:2017} may be
implemented  for the  3D special relativistic Boltzmann equation \eqref{eq:Boltz}.
In view of the fact that  the  variables $\{n, \vec{u}, \theta, \Pi\}$
are several physical quantities of practical interest
and the first three are required in calculating the equilibrium distribution $f^{(0)}$,
the $N_{M}$-dimensional vector
 \[
\vec{W_{M}}=(n,\vec{u},\theta,\Pi,\tilde{f}_{0,-1}^{(1)},\tilde{f}_{0,0}^{(1)},\tilde{f}_{0,1}^{(1)},\cdots,f_{M,0}^{(0)},f_{M-1,-1}^{(1)},
f_{M-1,0}^{(1)},f_{M-1,1}^{(1)},\cdots,f_{0,-M}^{(M)},\cdots,f_{0,M}^{(M)} )^{T},
\]
will be considered as the dependent variable vector, instead of $\vec{f}_{M}$ defined in \eqref{EQ-projection-bbbb},
where $\tilde{f}_{0,m}^{(1)}:=f_{0,m}^{(1)}(c_{0}^{(1)})^{-1}$   satisfying
\[
n^{\alpha}=-n_{1}^{\alpha}\tilde{f}_{0,1}^{(1)}-n_{2}^{\alpha}\tilde{f}_{0,-1}^{(1)}+n_{3}^{\alpha}\tilde{f}_{0,0}^{(1)}.
\]
Thanks to \eqref{eq:condition-222} and \eqref{eq:condition},
the relations between $\vec{W_{M}}$ and $\vec{f_{M}}$ is
\begin{equation}
\label{eq:constraint}
\vec{f}_{M}=\vec{D}_{M}^{W}\vec{W}_{M},
\end{equation}
where  the square matrix $\vec{D}_{M}^{W}$ depends on $\theta$ and  is of the following explicit form
\[
\vec{D}_{1}^{W}=
\begin{pmatrix}
(c_{0}^{(0)})^{-1}& \ \\
\                 &\vec{O}_{4\times 4}
\end{pmatrix},\quad
\vec{D}_{2}^{W}=
\begin{pmatrix}
\vec{D}_{1}^{W} &\vec{D}_{5\times 4}^{12}    &\  \\    
\ &\vec{D}_{4\times 4}^{22} &\  \\
\ &\    &\vec{I}_{5\times5}
\end{pmatrix},
\]
where
\[
\vec{D}_{5\times 4}^{12}=
\begin{pmatrix}
& -3c_{0}^{(0)}   & 0    & 0     & 0      \\    
& -3\zeta c_{1}^{(0)}x_{1,1}^{(0)} & 0    & 0     & 0    \\
& 0  & c_{0}^{(1)}  & 0     & 0      \\
& 0 & 0     & c_{0}^{(1)}    & 0              \\
& 0 & 0     & 0     & c_{0}^{(1)}    &\   \\
\end{pmatrix},\quad
\vec{D}_{4\times 4}^{22}=\begin{pmatrix}
& -3c_{2}^{(0)}x_{1,2}^{(0)}   & 0    & 0     & 0      \\    
& 0 & -c_{1}^{(1)}x_{1,1}^{(1)}    & 0     & 0    \\
& 0  &  0 & -c_{1}^{(1)}x_{1,1}^{(1)}    & 0      \\
& 0 & 0     & 0   &-c_{1}^{(1)}x_{1,1}^{(1)}           \\
\end{pmatrix},
\]
and  $\vec{D}_{M}^{W}={\rm diag}\{\vec{D}_{2}^{W},\vec{I}_{N_{M}-N_{2}}\}$ for $M\geq3$.
\begin{figure}[h!]
      \centering
      \includegraphics[width=14cm,height=5.2cm]{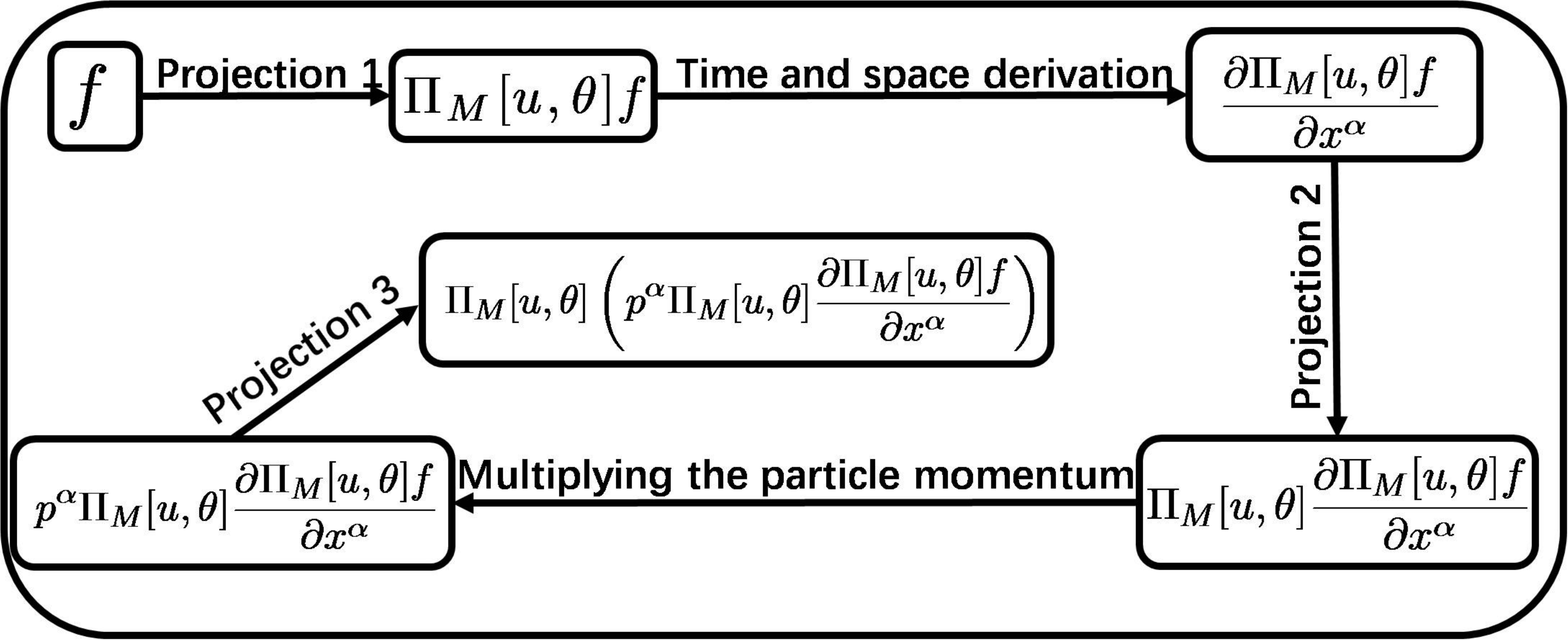}
  \caption{\small Schematic diagram of the moment method by the operator projection
  for the  special relativistic Boltzmann equation.}
    \label{fig:step}
\end{figure}
Referring to the schematic diagram shown in  Fig. \ref{fig:step},
the arbitrary order moment system for the  Boltzmann equation \eqref{eq:Boltz}
may be derived by the operator projection as follows:

\noindent
{\tt Step 1 (Projection 1):}
Projecting the distribution function $f$
  into space $\mathbb{H}^{g^{(0)}_{[\vec{u},\theta]}}_{M}$  by the operator $  \Pi_{M}[\vec{u},\theta]$ defined in \eqref{EQ-projection-aaaa2}.
\noindent

{\tt Step 2: }
Calculating the partial derivatives in time and space provides
    \begin{align}
    \nonumber \frac{\partial \Pi_{M}[\vec{u},\theta]f}{\partial s}&=\left[\frac{\partial \vec{\mathcal{P}}_{M}[\vec{u},\theta]}{\partial s},\vec{f}_{M}\right]_{M}
    + \left[\vec{\mathcal{P}}_{M}[\vec{u},\theta],\frac{\partial \vec{f}_{M}}{\partial s}\right]_{M}\\
    &=\left[\vec{C}_{M+1}\vec{P}_{M,M+1}^{T}\vec{\mathcal{P}}_{M}[\vec{u},\theta],\vec{P}_{M,M+1}^{T}\vec{f}_{M}\right]_{M+1}
    + \left[\vec{\mathcal{P}}_{M}[\vec{u},\theta],\frac{\partial \vec{f}_{M}}{\partial s}\right]_{M},\label{eq:diff}
    \end{align}
   for $s=x^{\alpha}$,
   where ${\vec{C}}_{M+1}$ is a square matrix of order $N_{M+1}$ and directly derived  with the aid of the derivative relations of the
    basis functions in Lemma \ref{lem:derive}.

\noindent
{\tt Step 3 (Projection 2):}
Projecting the partial derivatives in \eqref{eq:diff} into
  the  space $\mathbb{H}^{g^{(0)}_{[\vec{u},\theta]}}_{M}$ gives
    \begin{align}
    \nonumber \Pi_{M}[\vec{u},\theta]\frac{\partial \Pi_{M}[\vec{u},\theta]f}{\partial s}&=
    \left[\vec{\mathcal{P}}_{M}[\vec{u},\theta],\vec{C}_{M}^{T}\vec{f}_{M}\right]_{M}
    + \left[\vec{\mathcal{P}}_{M}[\vec{u},\theta],\frac{\partial \vec{f}_{M}}{\partial s}\right]_{M}\\
    \nonumber &=\left[\vec{\mathcal{P}}_{M}[\vec{u},\theta],\vec{C}_{M}^{T}\vec{D}_{M}^{W}\vec{W}_{M}+\frac{\partial \left(\vec{D}_{M}^{W}\vec{W}_{M}\right)}{\partial s}\right]_{M}\\
    &=:\left[\vec{\mathcal{P}}_{M}[\vec{u},\theta],\vec{D}_{M}\frac{\partial \vec{W}_{M}}{\partial s}\right]_{M},
    \label{eq:proj1}
    \end{align}
    where the $N_{M}$-by-$N_{M}$  matrix  $\vec{D}_{M}$  may be obtained from $\vec{C}_{M}$ and $\vec{D}_{M}^{W}$
    and is   of the following form
        \begin{equation}
        \label{eq:Dmatrix}
        \vec{D}_{M}=\begin{pmatrix}
        &\ &\ &\vec{D}_{2} &\ &\ & O &\ &\ \\
        &0 &* &* &0 &\ &\ &\ &\  \\
        &\vdots&\vdots&\vdots&\ &\  &\vec{I}_{N_{M}-N_{2}}&\ &\\\
        &0 &* &* &0 &\ &\  &\ &\
        \end{pmatrix}, \      M\geq3,\quad
\vec{D}_{2}=\begin{pmatrix}
\vec{D}_{5\times5}^{11} &\vec{D}_{5\times4}^{12} & \vec{O}\\
\vec{D}_{4\times5}^{21} & \vec{D}_{4\times4}^{22} & \vec{O}\\
\vec{O} & \vec{O} & \vec{I}_{5\times5}
\end{pmatrix},
        \end{equation}
and
        \begin{align*}
     \vec{D}_{1}=&\begin{pmatrix}
(c_{0}^{(0)})^{-1}& 0  & 0 & 0  & -n\zeta^2(c_{1}^{(0)})^{-2}(c_{0}^{(0)})^{-1} \\
0                  & 0  & 0 & 0  & n\zeta^2(c_{1}^{(0)})^{-1} \\
0                  & -n U^{0}n_{2}^{1}c_{0}^{(1)}&-n U^{0}n_{2}^{2}c_{0}^{(1)}&-n U^{0}n_{2}^{3}c_{0}^{(1)} & 0 \\
0                  & n U^{0}n_{3}^{1}c_{0}^{(1)}  & n U^{0}n_{3}^{2}c_{0}^{(1)} & n U^{0}n_{3}^{3}c_{0}^{(1)}   & 0 \\
0                  & -n U^{0}n_{1}^{1}c_{0}^{(1)} & -n U^{0}n_{1}^{2}c_{0}^{(1)}&  -n U^{0}n_{1}^{3}c_{0}^{(1)}& 0
    \end{pmatrix},\\
\vec{D}_{5\times5}^{11} =&\begin{pmatrix}
(c_{0}^{(0)})^{-1}& 0  & 0 & 0  & -n\zeta^2(c_{1}^{(0)})^{-2}(c_{0}^{(0)})^{-1} \\
0                  & 0  & 0 & 0  & n\zeta^2(c_{1}^{(0)})^{-1} \\
0                  & -n U^{0}n_{2}^{1}c_{0}^{(1)}&-n U^{0}n_{2}^{2}c_{0}^{(1)}&-n U^{0}n_{2}^{3}c_{0}^{(1)}+(1-\vec u^2)^{-\frac{3}{2}}c_{0}^{(1)}\tilde{f}_{0,-1}^{(1)} & 0 \\
0                  & n U^{0}n_{3}^{1}c_{0}^{(1)}  & n U^{0}n_{3}^{2}c_{0}^{(1)} & n U^{0}n_{3}^{3}c_{0}^{(1)} + (1-\vec u^2)^{-\frac{3}{2}}c_{0}^{(1)}\tilde{f}_{0,0}^{(1)} & 0 \\
0                  & -n U^{0}n_{1}^{1}c_{0}^{(1)} & -n U^{0}n_{1}^{2}c_{0}^{(1)}&  -n U^{0}n_{1}^{3}c_{0}^{(1)}+(1-\vec u^2)^{-\frac{3}{2}}c_{0}^{(1)}\tilde{f}_{0,1}^{(1)}& 0
    \end{pmatrix},\\
    \vec{D}_{4\times5}^{21} =&\begin{pmatrix}
0                  &-\frac{c_{2}^{(0)}\tilde{f}_{0,-1}^{(1)}(x_{1,2}^{(0)}+x_{2,2}^{(0)})}{(1-\vec u^2)^{\frac{3}{2}}}
&-\frac{c_{2}^{(0)}\tilde{f}_{0,0}^{(1)}(x_{1,2}^{(0)}+x_{2,2}^{(0)})}{(1-\vec u^2)^{\frac{3}{2}}}
&-\frac{c_{2}^{(0)}\tilde{f}_{0,1}^{(1)}(x_{1,2}^{(0)}+x_{2,2}^{(0)})}{(1-\vec u^2)^{\frac{3}{2}}} & 0\\
0                  &-U^{0}n_{2}^{1}c_{1}^{(1)}\Pi
&-U^{0}n_{2}^{2}c_{1}^{(1)}\Pi
&-U^{0}n_{2}^{3}c_{1}^{(1)}\Pi & 0\\
0                  &U^{0}n_{3}^{1}c_{1}^{(1)}\Pi
&U^{0}n_{3}^{2}c_{1}^{(1)}\Pi
&U^{0}n_{3}^{3}c_{1}^{(1)}\Pi & 0\\
0                  &-U^{0}n_{1}^{1}c_{1}^{(1)}\Pi
&-U^{0}n_{1}^{2}c_{1}^{(1)}\Pi
&-U^{0}n_{1}^{3}c_{1}^{(1)}\Pi & 0
    \end{pmatrix}.
\end{align*}

 \noindent
 {\tt Step 4:}
 Multiplying  \eqref{eq:proj1} by the particle momentum $(p^\alpha)$ yields
    \begin{align}
    \nonumber p^{\alpha}\Pi_{M}[\vec{u},\theta]\frac{\partial \Pi_{M}[\vec{u},\theta]f}{\partial x^{\alpha}}
    &:=[p^{\alpha}\vec{\mathcal{P}}_{M}[\vec{u},\theta],\vec{D}_{M}\frac{\partial \vec{W}_{M}}{\partial x^{\alpha}}]_{M}\\
    &=[\vec{M}_{M+1}^{\alpha}\vec{P}_{M,M+1}^{T}\vec{\mathcal{P}}_{M}[\vec{u},\theta],\vec{P}_{M,M+1}^{T}\vec{D}_{M}\frac{\partial \vec{W}_{M}}{\partial x^{\alpha}}]_{M+1}.    \label{eq:MtREC}
    \end{align}

\noindent
{\tt Step 5 (Projection 3):}
Projecting \eqref{eq:MtREC} into the space $\mathbb{H}^{g^{(0)}_{[\vec{u},\theta]}}_{M}$ gives
      \begin{equation}
    \label{eq:pMtREC}
    \Pi_{M}[\vec{u},\theta]\left(p^{\alpha}\Pi_{M}[\vec{u},\theta]\frac{\partial \Pi_{M}[\vec{u},\theta]f}{\partial x^{\alpha}}\right)=[\vec{\mathcal{P}}_{M}[\vec{u},\theta],\vec{M}_{M}^{\alpha}\vec{D}_{M}\frac{\partial \vec{W}_{M}}{\partial x^{\alpha}}]_{M}.
    \end{equation}

\noindent
{\tt Step 6:}
   Substituting them into  the special relativistic Boltzmann equation \eqref{eq:Boltz} derives
   the abstract form of the moment system
 \begin{equation}
 \Pi_{M}[\vec{u},\theta]\left(p^{\alpha}\Pi_{M}[\vec{u},\theta]\left( \frac{\partial \Pi_{M}[\vec{u},\theta]f}{\partial x^{\alpha}}\right)\right)
        =\Pi_{M}[\vec{u},\theta]Q(\Pi_{M}[\vec{u},\theta]f,\Pi_{M}[\vec{u},\theta]f),
        \label{eq:moment0000}
        \end{equation}
and then       matching the coefficients in front of the basis functions $\{ \tilde{P}_{k{,m}}^{(\ell)}[\vec{u},\theta]\}$
       leads to   an ``explicit'' matrix-vector form of the moment system
         \begin{equation}
          \vec{B}_{M}^{\alpha}\frac{\partial \vec{W}_{M}}{\partial x^{\alpha}}=\vec{S}(\vec{W}_{M}), \label{eq:moment1}
        \end{equation}
        which consists of  $N_{M}$ equations,
        where $ \vec{B}_{M}^{\alpha}=\vec{M}^{\alpha}_{M}\vec{D}_{M}$.
 For a general collision term $Q(f,f)$,  it is difficult to get an explicit expression of
    the source term  $\vec{S}(\vec{W}_{M})$ in \eqref{eq:moment1}.
  For the Anderson-Witting model  \eqref{eq:colAW},
  the right-hand side of \eqref{eq:moment0000} becomes
    \begin{align*}
    \frac{1}{\tau}&\Pi_{M}[\vec{u},\theta]Q(\Pi_{M}[\vec{u},\theta]f,\Pi_{M}[\vec{u},\theta]f)=-\frac{1}{\tau}\Pi_{M}[\vec{u},\theta]E\Pi_{M}[\vec{u},\theta]{(f-f^{(0)})}\\
    &=-\frac{1}{\tau}\Pi_{M}[\vec{u},\theta][\vec{P}_{M+1}^{p}\vec{A}_{M+1}^{0}(\vec{P}_{M+1}^{p})^{T}\vec{P}_{M,M+1}^{T}\vec{\mathcal{P}}_{M}[\vec{u},\theta],
     \vec{P}_{M,M+1}^{T}(\vec{f}_{M}-\vec{f}^{(0)}_{M})]_{M+1}\\
    &=-\frac{1}{\tau}[\vec{\mathcal{P}}_{M}[\vec{u},\theta],\vec{P}_{M}^{p}\vec{A}_{M}^{0}(\vec{P}_{M}^{p})^{T}{\tilde{\vec{D}}_{M}^{W}\vec{W}_{M}}]_{M},
    \end{align*}
which implies that    the source term  $\vec{S}(\vec{W}_{M})$  can be explicitly given by
    \begin{equation}
    \label{eq:colAWfinal}
    \vec{S}(\vec{W}_{M})=-\frac{1}{\tau}\vec{P}_{M}^{p}\vec{A}_{M}^{0}(\vec{P}_{M}^{p})^{T}{\tilde{\vec{D}}_{M}^{W}\vec{W}_{M}},
    \end{equation}
where  $\vec{f}^{(0)}_{M}=\left(n\sqrt{G(\zeta)-{4}\zeta^{-1}},0,\cdots,0\right)^{T}$,
and  the matrix $\tilde{\vec{D}}_{M}^{W}$ is the same as $\vec{D}_{M}^{W}$ except that the component of the upper left  corner
 is zero.
 It is worth noting that  the first {five} components
 of $\vec{S}(\vec{W}_{M})$ are zero due to  \eqref{eq:condition-222} and  \eqref{eq:condition}.

\begin{remark}
	When $M=1$,  it is obvious to show  $\Pi_M [\vec{u},\theta] f=f^{(0)}$ by the definition of $\vec{D}_{M}^{W}$
	such that the moment system \eqref{eq:moment1} gives
	the macroscopic RHD equations \eqref{eq:conser}.
\end{remark}

\subsection{The limited case}
\label{subsec:limit}
This section  discusses two important special cases: the non-relativistic
limit (small temperature and velocities) for a cool gas and the ultra-relativistic limit (zero rest mass of the particles).

\subsubsection{The non-relativistic limit}
In the {non}-relativistic limit,
one has that $\zeta\rightarrow+\infty$,
$K_{\nu}(\zeta)\rightarrow \sqrt{\frac{\pi}{2\zeta}}\exp(-\zeta)$,
$\vec{u}c^{-1}\rightarrow\vec0$, and $\vec{p}c^{-1}\rightarrow\vec0$.
Thus it holds
\begin{align*}
E=\sqrt{c^2+\vec{u}^2}\sqrt{m^2c^2+\vec{p}^2}-\vec{u}\cdot\vec{p}=&
(mc+\frac{\vec{p}^2}{2mc})(c+\frac{\vec{u}^2}{2c})-\vec{u}\cdot\vec{p}
\\
 \rightarrow & mc^2+\frac{m\vec{u}^2}{2}+\frac{\vec{p}^2}{2m}-\vec{u}\cdot\vec{p}
= mc^2+\frac{m|\vec\xi-\vec u|^2}{2},
\end{align*}
where  $\vec\xi=\vec{p}m^{-1}$,
and the Maxwell-J$\ddot{\text{u}}$ttner distribution
 in \eqref{eq:equm1} reduces to
  \[
 f^{(0)}=ng^{(0)},\quad g^{(0)}=
 \frac{1}{(2\pi mkT)^{\frac{3}{2}}}\exp\left(-\frac{m|\vec\xi-\vec u|^2}{2kT}\right),
 \]
which means that the weight function in \eqref{eq:equm1-zzzzz} reduces to
 \[
 g^{(0)}_{[\vec{u},\theta]}=\frac{1}{(2\pi \theta)^{\frac{3}{2}}}\exp\left(-\frac{|\vec\xi-\vec u|^2}{2\theta}\right).
 \]
 Corresponding to such limited weight function $g^{(0)}_{[\vec{u},\theta]}$,
 the previous standard orthogonal bases of the weighted polynomial spaces and
 the moment system will respectively reduce to those in \cite{MR:2014} for the non-relativistic case.

\subsubsection{The ultra-relativistic limit}
In the ultra-relativistic limit ($m\rightarrow0$), one has
$K_2(\zeta)\rightarrow \frac{2}{\zeta^2}$, $p^{0}\rightarrow \vec{p}$, and the Maxwell-J$\ddot{\text{u}}$ttner distribution
 in \eqref{eq:equm1} reduces to
 \[
 f^{(0)}=ng^{(0)},\quad g^{(0)}=
 \frac{ c^3}{8\pi k^3T^3}\exp\left(-\frac{U_{\alpha}p^{\alpha}}{kT}\right),
 \]
 so that the weight function in \eqref{eq:equm1-zzzzz} tends to
 \[
 g^{(0)}_{[\vec{u},\theta]}= \frac{ 1}{8\pi \theta^3}\exp\left(-\frac{E}{\theta}\right).
 \]
For this weight function $g^{(0)}_{[\vec{u},\theta]}$,
 the previous standard orthogonal bases of the weighted polynomial spaces
become the followings.

 \begin{thm}
 The basis of the weighted polynomial space $\mathbb{H}_{M}^{g^{(0)}_{[\vec u,\theta]}}$ in
 \eqref{eq:basis} reduces to
 \begin{align}
\label{eq:basisultra}
\nonumber\vec{\mathcal{P}}_{\infty}[\vec{u},\theta]:=&(\tilde{P}_{0,0}^{(0)}[\vec{u},\theta],\cdots,\tilde{P}_{M,0}^{(0)}[\vec{u},\theta],
\tilde{P}_{M-1,-1}^{(1)}[\vec{u},\theta],
\tilde{P}_{M-1,0}^{(1)}[\vec{u},\theta],\\
&\tilde{P}_{M-1,1}^{(1)}[\vec{u},\theta],\cdots,\tilde{P}_{0,-M}^{(M)},\cdots,\tilde{P}_{0,M}^{(M)},\cdots)^{T},
\\
\label{eq:huapultra}
\nonumber\vec{\mathcal{P}}_{M}[\vec{u},\theta]:=&(\tilde{P}_{0,0}^{(0)}[\vec{u},\theta],\tilde{P}_{1,0}^{(0)}[\vec{u},\theta],\tilde{P}_{0,-1}^{(1)}[\vec{u},\theta],
\tilde{P}_{0,0}^{(1)}[\vec{u},\theta],\tilde{P}_{0,1}^{(1)}[\vec{u},\theta],
\cdots,\tilde{P}_{M,0}^{(0)}[\vec{u},\theta],\\
&\tilde{P}_{M-1,-1}^{(1)}[\vec{u},\theta],
\tilde{P}_{M-1,0}^{(1)}[\vec{u},\theta],\tilde{P}_{M-1,1}^{(1)}[\vec{u},\theta],\cdots,\tilde{P}_{0,-M}^{(M)},\cdots,\tilde{P}_{0,M}^{(M)} )^{T},
\end{align}
 where $\tilde{P}_{k,m}^{(\ell)}[\vec{u},\theta]:=g^{(0)}_{[\vec{u},\theta]}P_{k}^{(\ell)}\tilde{Y}_{\ell,m}$,
$\tilde{Y}_{\ell,m}:=E^{\ell}Y_{\ell,m}$, $|m|\leq\ell$, $\ell\in\mathbb{N}$,
and $\{P_{k}^{(\ell)}\}$ reduce to the generalized Laguerre polynomials of the variable $E/\theta$ with
the parameter $2\ell+1$.
 \end{thm}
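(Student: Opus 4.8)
The plan is to reduce everything to the single statement about the orthogonal polynomials, since the limiting forms of $K_2(\zeta)$, $p^0$, and $g^{(0)}_{[\vec u,\theta]}$ are already recorded in the paragraph preceding the theorem, and the spherical-harmonic factor is immediate. Indeed, as $\zeta=\theta^{-1}\to0$ the mass-shell becomes null and $E\gg1$ on the support of the weight, so $(E^2-1)^{\ell/2}=E^\ell(1-E^{-2})^{\ell/2}\to E^\ell$, whence $\tilde Y_{\ell,m}=(E^2-1)^{\ell/2}Y_{\ell,m}\to E^\ell Y_{\ell,m}$, exactly the factor claimed in \eqref{eq:basisultra}--\eqref{eq:huapultra}. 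Orthonormality of the resulting basis then follows by passing Lemma \ref{lem:basis} to the limit. Hence the crux is to identify the limit of $\{P_k^{(\ell)}(x;\zeta)\}$.

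First I would perform the rescaling $t:=\zeta x=E/\theta$ in the inner product $(\cdot,\cdot)_{\omega^{(\ell)}}$. Under $x=t/\zeta$, $dx=\zeta^{-1}dt$, the interval $[1,+\infty)$ becomes $[\zeta,+\infty)$, and the weight \eqref{eq:omegal} transforms into
\[
\omega^{(\ell)}(x;\zeta)\,dx=\frac{1}{(2\ell+1)K_2(\zeta)\,\zeta^{2\ell+1}}\,(t^2-\zeta^2)^{\ell+\frac12}e^{-t}\,dt .
\]
Since the family $\{P_k^{(\ell)}\}$ is unchanged when the weight is multiplied by a positive $t$-independent constant, I would discard the prefactor and study the monic orthogonal polynomials for $W(t;\zeta):=(t^2-\zeta^2)^{\ell+\frac12}e^{-t}\mathbf 1_{[\zeta,+\infty)}(t)$. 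As $\zeta\to0$ one has $W(t;\zeta)\to t^{2\ell+1}e^{-t}$ pointwise on $(0,+\infty)$, which is precisely the generalized Laguerre weight with parameter $2\ell+1$.

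The main step is to upgrade this pointwise convergence of weights to convergence of the polynomials, and here the key estimate is the monotone bound $(t^2-\zeta^2)^{\ell+\frac12}\le t^{2\ell+1}$ for $t\ge\zeta$. This supplies the integrable dominating function $t^{\,j+2\ell+1}e^{-t}$ for each moment, so dominated convergence yields $\int W(t;\zeta)t^j\,dt\to\Gamma(j+2\ell+2)$ for every $j\in\mathbb N$, the exact moment sequence of $t^{2\ell+1}e^{-t}$. Because the monic orthogonal polynomials --- equivalently the coefficients $a_k^{(\ell)},b_k^{(\ell)}$ of the recurrence in Theorem \ref{thm:rec} --- are continuous functions of the moment sequence wherever the limiting Hankel determinants are nonzero, and those determinants are positive for the genuine positive limiting weight, the monic polynomials for $W(t;\zeta)$ converge coefficientwise to the monic generalized Laguerre polynomials. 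The norms converge as well, so after the normalization $(P_i^{(\ell)},P_k^{(\ell)})_{\omega^{(\ell)}}=\delta_{i,k}$ the orthonormal $P_k^{(\ell)}(x;\zeta)$, written in the variable $t=E/\theta$, coincide up to normalization with the generalized Laguerre polynomials $L_k^{(2\ell+1)}(E/\theta)$, as asserted.

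The hard part is exactly this passage from weights to polynomials; everything else is bookkeeping. A more self-contained route I would keep in reserve avoids the general continuity statement: rescale the three-term recurrence of Theorem \ref{thm:rec} by $t=\zeta x$, show its coefficients converge to those of $L_k^{(2\ell+1)}$, and invoke uniqueness of the orthonormal family for the fixed positive weight $t^{2\ell+1}e^{-t}$ to identify the limit. Either way, once the polynomial limit and the factor $\tilde Y_{\ell,m}\to E^\ell Y_{\ell,m}$ are in hand, substituting into \eqref{eq:basis} produces \eqref{eq:basisultra}--\eqref{eq:huapultra} and completes the proof.
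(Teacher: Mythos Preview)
Your proposal is correct and follows essentially the same route as the paper: both hinge on the rescaling $\tilde x=\zeta x=E/\theta$, under which the weight $\omega^{(\ell)}$ becomes a constant multiple of the generalized Laguerre weight $\tilde x^{\,2\ell+1}e^{-\tilde x}$, and hence the orthonormal family is identified with the generalized Laguerre polynomials of parameter $2\ell+1$. The paper's proof is terser---it simply writes down the reduced weight $\omega^{(\ell)}(x;\zeta)=\frac{\zeta^3 x^{2\ell+1}}{2\ell+1}e^{-\zeta x}$, substitutes $\tilde x=\zeta x$, and exhibits the normalization constant $C_{\ell,k}=\sqrt{(2\ell+1)k!/(\zeta^2\Gamma(k+2\ell+2))}$---whereas you add the dominated-convergence and moment-continuity layer to justify the passage rigorously; but the substantive idea is the same.
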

\begin{proof}
Due to the ultra-relativistic limit of \eqref{eq:equm1-zzzzz},
the weight function in \eqref{eq:omegal} reduces to
\[
  \omega^{(\ell)}(x;\zeta)=\frac{\zeta^3 x^{2\ell+1}}{(2\ell+1)}\exp\left(-\zeta x\right),\ell\in\mathbb{N}.
\]
If using the notation $\tilde{x}=\zeta x$, then one has
\[
P_{k}^{(\ell)}(x,\zeta)=C_{\ell,k}\bar{P}_{k}^{(2\ell+1)}(\tilde{x}), \ x>0,
\]
where $\{\bar{P}_{k}^{(2\ell+1)}\}$ are the
generalized Laguerre polynomials with the parameter $2\ell+1$,
satisfying
\[
\int_{0}^{\infty}\tilde{x}^{2\ell+1}\exp(-\tilde{x})\bar{P}_{k}^{(2\ell+1)}
\bar{P}_{m}^{(2\ell+1)}d\tilde{x}=\frac{\Gamma(k+2\ell+2)}{k!}\delta_{k,m},
\]
and
\[
C_{\ell,k}=\sqrt{\frac{(2\ell+1)k!}{\zeta^2\Gamma(k+2\ell+2)}}.
\]
Thus, it holds
\[
\int_{0}^{\infty}P_{k}^{(\ell)}(x,\zeta)P_{m}^{(\ell)}(x,\zeta)\omega^{(\ell)}(x;\zeta)dx
=\delta_{k,m}.
\]
 The proof is completed.
\qed\end{proof}
Based on the above discussion, one can  derive the moment system in the case of
ultra-relativistic  limit by  the procedure in Sec. \ref{subsec:deduction}.

\subsection{Quasi-1D case}
\label{subsec:reduction}
In practice, only one spatial coordinate $x\in\mathbb{R}$ may be considered for some problems, e.g. the shock tube or spherical symmetric problems, but  the particle momentum   should still be 3D, i.e. $\vec{p}\in\mathbb{R}^{3}$.

Take $x=x_3\in\mathbb{R}$ as an example,
and assume that the distribution $f(\vec{x},\vec{p},t)$
is symmetric in $p^{1}$ and $p^{2}$ directions, and constant in $x_{1}$ and $x_{2}$,
that is,
$
f(\vec{x},\vec{p},t)=f(x_{3},\vec{p},t)$, $f(\vec{x},-p^{1},p^{2},p^{3},t)=f(\vec{x},p^{1},p^{2},p^{3},t)$,
$f(\vec{x},p^{1},-p^{2},p^{3},t)=f(\vec{x},p^{1},p^{2},p^{3},t)$.
The energy-momentum tensor becomes
\[
T^{\alpha\beta}=\begin{pmatrix}
\int_{\mathbb{R}^{3}}p^{0}p^{0}f\frac{d^{3}\vec{p}}{p^{0}}&0&0&\int_{\mathbb{R}^{3}}p^{0}p^{3}f\frac{d^{3}\vec{p}}{p^{0}}\\
0&\int_{\mathbb{R}^{3}}p^{1}p^{1}f\frac{d^{3}\vec{p}}{p^{0}}&0&0\\
0&0&\int_{\mathbb{R}^{3}}p^{2}p^{2}f\frac{d^{3}\vec{p}}{p^{0}}&0\\
\int_{\mathbb{R}^{3}}p^{0}p^{3}f\frac{d^{3}\vec{p}}{p^{0}}&0&0&\int_{\mathbb{R}^{3}}p^{3}p^{3}f\frac{d^{3}\vec{p}}{p^{0}}
\end{pmatrix},
\]
thus one has  $U^{1}=U^{2}=0$, thanks to  Theorem \ref{thm:admissible}.

According to \eqref{eq:ni} and \eqref{eq:yphi}, one gets
\[
n_{1}^{\alpha}=(0,1,0,0),\quad n_{2}^{\alpha}=(0,0,1,0),\quad n_{3}^{\alpha}=(U^{3},0,0,U^{0}),
\]
\[
y=(E^2-1)^{-\frac{1}{2}}(U^{0}p^{3}-U^{3}p^{0}), \ \cos\phi=\frac{p^1}{\sqrt{1-y^2}},\
\sin\phi=\frac{p^2}{\sqrt{1-y^2}},
\]
\[
p^{0}=U^{0}E+\sqrt{E^2-1}U^{3}y,\ p^{1}=\sqrt{E^2-1}\sqrt{1-y^2}\cos\phi,
\ p^{2}=\sqrt{E^2-1}\sqrt{1-y^2}\sin\phi.
\]
Using  Lemma \ref{lem:derive} yields
\begin{align*}
&\frac{\partial \tilde{P}_{k,m}^{(\ell)}[\vec{u},\theta]}{\partial s}
=-\frac{\partial \theta}{\partial s}\zeta^2\left(\frac{1}{2}\left(G(\zeta)-\zeta^{-1}-b_{k}^{(\ell)}\right)\tilde{P}_{k,m}^{(\ell)}
-a_{k}^{(\ell)}\tilde{P}_{k+1,m}^{(\ell)}\right)\\
&-\frac{\partial u_{3}}{\partial s}U^{0}g^{(0)}_{[\vec{u},\theta]}\left[\left(\frac{2\ell+1}{2\ell+3}\frac{k}{\tilde{p}_{k-1}^{(\ell+1)}}-\zeta q_{k-1}^{(\ell+1)}\right)
n_{3}^{i}h_{\ell+1,m}\tilde{P}_{k-1,m}^{(\ell+1)}
-\zeta p_{k}^{(\ell+1)}n_{3}^{i}h_{\ell+1,m}\tilde{P}_{k,m}^{(\ell+1)}\right.\\
&-\left.\frac{2\ell+1}{2\ell-1}\left((k+2\ell+1)\tilde{p}_{k}^{(\ell)}-\zeta q_{k}^{(\ell)}\right)
n_{3}^{i}h_{\ell,m}\tilde{P}_{k+1,m}^{(\ell-1)}
+\frac{2\ell+1}{2\ell-1}\zeta r_{k+1}^{(\ell)}
n_{3}^{i}h_{\ell,m}\tilde{P}_{k+2,m}^{(\ell-1)}\right],
\end{align*}
and $d\phi=0$.
Further using  Lemmas \ref{lem:rec} and \eqref{eq:Dmatrix} derives
\[
\vec{M}_{M}^{0}=U^{0}\vec{P}_{M}^{p}\vec{A}_{M}^{0}(\vec{P}_{M}^{p})^{T}+U^{3}\vec{P}_{M}^{p}\vec{A}_{M}^{3}(\vec{P}_{M}^{p})^{T}, \
\vec{M}_{M}^{3}=U^{3}\vec{P}_{M}^{p}\vec{A}_{M}^{0}(\vec{P}_{M}^{p})^{T}+U^{0}\vec{P}_{M}^{p}\vec{A}_{M}^{3}(\vec{P}_{M}^{p})^{T},
\]
and the second to fourth columns of $\vec{D}_{M}$ have only one nonzero component in each row, where the column for $u^{3}$  only corresponds to
the row for $\tilde{P}_{k,0}^{(\ell)}$.

On the other hand,  the physical quantities of practical interest  $(n,u_{3},\theta,\Pi,n^{3})$ can be expressed as
$(f_{0,0}^{(0)},f_{1,0}^{(0)},f_{0,0}^{(1)},f_{2,0}^{(0)})$ due to \eqref{eq:constraint},
and the expansion of $f$  can be reduced as follows
\[
\acute{\Pi}_{M}[\vec{u},\theta]f
=\sum_{\ell=0}^{M}\sum_{i=0}^{M-\ell}f_{i,0}^{(\ell)}\tilde{P}_{i,0}^{(\ell)}[\vec{u},\theta], \
\vec{\acute{W}_{M}}=(n,u,\theta,\Pi,\tilde{f}_{0,0}^{(1)},\cdots,f_{M,0}^{(0)},
f_{M-1,0}^{(1)},f_{0,0}^{(M)} )^{T},
\]
where $u=u^3$.
Thus if eliminating corresponding rows and columns of $\vec{D}_{M}$, $\vec{M}^{\alpha}$ and $\vec{S}(\vec{W}_{M})$ and then denoting them by the notations
$\vec{\acute{D}}_{M}$, $\vec{\acute{M}}^{\alpha}$, and $\vec{\acute{S}}(\vec{\acute{W}}_{M})$,
then the moment system \eqref{eq:moment1} becomes
\begin{equation}
  \vec{\acute{B}}_{M}^{0}\frac{\partial \vec{\acute{W}}_{M}}{\partial t}+
\vec{\acute{B}}_{M}^{3}\frac{\partial \vec{\acute{W}}_{M}}{\partial x}=\vec{\acute{S}}(\vec{W}_{M}), \label{eq:moment1D}
\end{equation}
where $\vec{\acute{B}}_{M}^{\alpha}=\vec{\acute{M}}^{\alpha}\vec{\acute{D}}_{M}$ and $\acute{N}_{M}=\frac{(M+1)(M+2)}{2}$.

\section{Properties of the moment system}
\label{sec:prop}
This section studies some properties of  moment system \eqref{eq:moment0000} or \eqref{eq:moment1}.

\subsection{Hyperbolicity}
\label{subsec:Hyper}	
In order to prove the hyperbolicity of the moment system \eqref{eq:moment1}, one has to verify that $\vec{B}_{M}^{0}$ to be invertible and  $\vec{B}_{M}:=(\vec{B}_{M}^{0})^{-1}\sum_{i=1}^{3}\tilde{n}_{i}\vec{B}_{M}^{i}$ to be real diagonalizable for a given unit vector $\vec{\tilde{n}}=(\tilde{n}_{1},\tilde{n}_{2},\tilde{n}_{3})$. In the following,  assume that
the first {five} components of  $\vec{W}_{M}$ satisfy three inequalities in \eqref{eq:positivity}.

\begin{lemma}
 \label{lem:Dinv}
The matrix $\vec{D}_{M}$ is invertible for $M\geq1$.
 \end{lemma}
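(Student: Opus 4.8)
The plan is to exploit the heavily block-structured form of $\vec{D}_M$ in \eqref{eq:Dmatrix} and to peel off invertible blocks until the determinant is reduced to that of a small matrix built from the spatial tetrad $(n_i^j)$, whose invertibility is elementary. First I would dispose of the tail: for $M\geq 3$ the matrix in \eqref{eq:Dmatrix} is block lower triangular, its leading $N_2\times N_2$ block being $\vec{D}_2$, its trailing diagonal block being $\vec{I}_{N_M-N_2}$, and the only nontrivial off-diagonal entries (the starred columns) sitting strictly below $\vec{D}_2$. Hence $\det\vec{D}_M=\det\vec{D}_2\cdot\det\vec{I}_{N_M-N_2}=\det\vec{D}_2$, so it suffices to treat $M=1$ and $M=2$. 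The case $M=1$ is a warm-up that already reveals the pattern: expanding $\vec{D}_1$ along its first column (only the $(1,1)$ entry $(c_0^{(0)})^{-1}$ survives) and then along the $\theta$-column leaves a $3\times3$ block whose rows are $-nU^0c_0^{(1)}$ times a row permutation of the spatial tetrad.

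For $M=2$ I would use the block form of $\vec{D}_2$. The trailing $\vec{I}_{5\times5}$ (the decoupled $\ell=2$ moments) gives $\det\vec{D}_2=\det\vec{M}_9$, where $\vec{M}_9$ is the $9\times9$ matrix assembled from $\vec{D}_{5\times5}^{11},\vec{D}_{5\times4}^{12},\vec{D}_{4\times5}^{21},\vec{D}_{4\times4}^{22}$. Since $\vec{D}_{4\times4}^{22}$ is diagonal with entries $-3c_2^{(0)}x_{1,2}^{(0)}$ and $-c_1^{(1)}x_{1,1}^{(1)}$, all nonzero because $c_k^{(\ell)}>0$ and the zeros satisfy $x_{i,k}^{(\ell)}>1$ by Theorem \ref{thm:zerointerself}, it is invertible and I may pass to the Schur complement $\vec{S}=\vec{D}_{5\times5}^{11}-\vec{D}_{5\times4}^{12}(\vec{D}_{4\times4}^{22})^{-1}\vec{D}_{4\times5}^{21}$, so that $\det\vec{M}_9=\det\vec{D}_{4\times4}^{22}\cdot\det\vec{S}$. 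Because $\vec{D}_{4\times5}^{21}$ has vanishing $n$- and $\theta$-columns, the correction term leaves the $n$- and $\theta$-columns of $\vec{S}$ equal to those of $\vec{D}_{5\times5}^{11}$, which are sparse; expanding $\det\vec{S}$ first along the $n$-column (single entry $(c_0^{(0)})^{-1}$) and then along the $\theta$-column (single surviving entry $n\zeta^2(c_1^{(0)})^{-1}$) reduces the problem to a $3\times3$ determinant in the velocity columns.

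That final $3\times3$ block has rows proportional to $c_0^{(1)}U^0\bigl(n+\Pi/x_{1,1}^{(1)}\bigr)$ times a row permutation of the spatial tetrad $\vec{N}=(n_i^j)_{i,j=1}^{3}$, with the diffusion moments $\tilde f_{0,m}^{(1)}$ entering only the $u_3$-column. From \eqref{eq:ni} one has $\vec{N}=\vec{I}_3+(U^0+1)^{-1}(U^iU^j)_{i,j=1}^{3}$, whence $\det\vec{N}=U^0>0$; multilinearity in the last column then writes the $3\times3$ determinant as a product of the manifestly positive scalars $c_0^{(0)},c_1^{(0)},c_0^{(1)},U^0,n$ with the factor $n+\Pi/x_{1,1}^{(1)}$, up to a correction linear in the non-equilibrium moments $\tilde f_{0,m}^{(1)}$. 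Here I would invoke Theorem \ref{lem:admissible1}, namely $\Pi>-n\theta$, together with the Bessel identity $\zeta G(\zeta)=\zeta K_1(\zeta)/K_2(\zeta)+4>1$ (equivalently $x_{1,1}^{(1)}=G(\zeta)>\theta$), to conclude that $n+\Pi/x_{1,1}^{(1)}>0$.

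The main obstacle is the bookkeeping of the Schur complement and, above all, controlling that last scalar factor: the $\tilde f_{0,m}^{(1)}$ corrections in the $u_3$-column make the reduced $3\times3$ determinant genuinely depend on the non-equilibrium state, so its nonvanishing cannot be read off by inspection but must be forced by combining the tetrad identity $\det\vec{N}=U^0$ with the admissibility constraints of Theorems \ref{thm:admissible} and \ref{lem:admissible1}. Isolating and bounding this factor—rather than performing the routine block reductions—is the delicate step of the argument.
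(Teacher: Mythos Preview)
Your strategy coincides with the paper's. For $M\ge3$ you both reduce to $\det\vec D_2$ by block lower-triangularity; for $M=1$ you both expand directly; for $M=2$ you both isolate the single physically sensitive scalar $nG(\zeta)+\Pi$ (this is your $n+\Pi/x_{1,1}^{(1)}$, since $x_{1,1}^{(1)}=G(\zeta)$ from \eqref{EQ-3.3aaaaaaa}) and bound it from below using Theorem~\ref{lem:admissible1} together with $G(\zeta)-\theta>0$.

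The one divergence is in what you call the ``delicate step.'' The paper does \emph{not} pass through the Schur complement; it simply records the closed form
\[
\det\vec D_2 \;=\; 3\zeta^{3}c_2^{(0)}c_1^{(1)}\bigl(x_{1,2}^{(0)}+x_{2,2}^{(0)}\bigr)\bigl(nG(\zeta)+\Pi\bigr)\,n\,c_1^{(0)}c_0^{(0)}(U^0)^6,
\]
in which the diffusion moments $\tilde f_{0,m}^{(1)}$ do not appear at all, and then applies the admissibility bound. In particular the paper invokes no constraint on the $\tilde f_{0,m}^{(1)}$; according to its computation those contributions drop out of the full $9\times9$ determinant. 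Your Schur-complement route shunts the $\tilde f$ entries from $\vec D_{4\times5}^{21}$ and $\vec D_{5\times5}^{11}$ into the reduced $3\times3$ velocity block, which is why you see them as an obstacle. If instead you expand the full $9\times9$ determinant directly along its sparse columns (the $n$-column has one nonzero entry, the $\theta$-column two, and each of the $\Pi$- and three $\tilde f$-columns only two), you should recover the paper's closed form; the proof then reduces to the two-line positivity bound you already have.
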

 \begin{proof}
 It is obvious that for $M=1$, the matrix ${\vec{D}}_{M}$ is invertible
 because $\det(\vec{D}_M)=n^{4}\zeta^2(c_0^{(1)})^{3}(c_0^{(0)}c_1^{(0)}(U^{0}+1))^{-1}((U^{0})^2+U^{0}+1)(U^{0})^{3}<0$.
{For $M\geq2$,} according to the form of $\vec{D}_M$ in Section \ref{subsec:deduction},
 one has
\[
\det(\vec{D}_{M})=\det(\vec{D}_{2})=\det(\vec{D}_{2})
=3\zeta^3c_{2}^{(0)}c_{1}^{(1)}(x_{1,2}^{(0)}+x_{2,2}^{(0)})({n} G(\zeta)+\Pi){n}c_{1}^{(0)}c_{0}^{(0)}(U^{0})^6.
\]
Using  Lemma \ref{lem:admissible1} gives
\[
\det(\vec{D}_{M})>\zeta^3c_{2}^{(0)}c_{1}^{(1)}(x_{1,2}^{(0)}+x_{2,2}^{(0)}){n}^2( G(\zeta)-\zeta^{-1})c_{1}^{(0)}c_{0}^{(0)}{(U^{0})^6}>0.
\]
The proof is completed.
\qed\end{proof}

\begin{thm}[Hyperbolicity]
\label{thm:hyper}
The moment system  \eqref{eq:moment1} is strictly hyperbolic and the spectral radius of $\vec{B}_{M}$ is less than one.
\end{thm}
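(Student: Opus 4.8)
The plan is to strip off the non-symmetric change-of-variables matrix $\vec{D}_M$ by a similarity transformation and reduce the question to a symmetric--definite generalized eigenvalue problem. Since $\vec{B}_M^\alpha=\vec{M}_M^\alpha\vec{D}_M$ and $\vec{D}_M$ is invertible by Lemma \ref{lem:Dinv}, I first observe
\[
\vec{B}_M=(\vec{B}_M^0)^{-1}\sum_{i=1}^3\tilde n_i\vec{B}_M^i
=\vec{D}_M^{-1}\left((\vec{M}_M^0)^{-1}\sum_{i=1}^3\tilde n_i\vec{M}_M^i\right)\vec{D}_M,
\]
so $\vec{B}_M$ is similar to $\vec{N}_M:=(\vec{M}_M^0)^{-1}\sum_{i=1}^3\tilde n_i\vec{M}_M^i$ and shares its eigenvalues and diagonalizability. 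The key structural fact follows from the recurrence of Lemma \ref{lem:rec} together with the orthonormality of Lemma \ref{lem:basis}: the degree-$(M{+}1)$ spillover terms $\vec e_M^\alpha$ are orthogonal to $\mathbb{H}^{g^{(0)}_{[\vec{u},\theta]}}_M$, so $\vec{M}_M^\alpha$ is exactly the matrix of the projected multiplication operator in the orthonormal basis, with entries $(\vec{M}_M^\alpha)_{ij}=<p^\alpha\tilde{P}_i,\tilde{P}_j>_{g^{(0)}_{[\vec{u},\theta]}}$. As $p^\alpha$ is a real scalar function of $\vec{p}$, each $\vec{M}_M^\alpha$ is symmetric.

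I then establish the definiteness that yields both invertibility of $\vec{B}_M^0$ and real diagonalizability. For a coordinate vector $\vec{v}$ with associated function $w=[\vec{\mathcal{P}}_M[\vec{u},\theta],\vec{v}]_M\in\mathbb{H}^{g^{(0)}_{[\vec{u},\theta]}}_M$ one has the quadratic-form identity
\[
\vec{v}^T\vec{M}_M^\alpha\vec{v}=<p^\alpha w,w>_{g^{(0)}_{[\vec{u},\theta]}}=\int_{\mathbb{R}^3}\frac{p^\alpha}{p^0}\,\frac{w^2}{g^{(0)}_{[\vec{u},\theta]}}\,d^3\vec{p}.
\]
Taking $\alpha=0$ gives $\vec{v}^T\vec{M}_M^0\vec{v}=\int_{\mathbb{R}^3}w^2/g^{(0)}_{[\vec{u},\theta]}\,d^3\vec{p}>0$ for $w\neq0$, so $\vec{M}_M^0$ is symmetric positive definite and hence $\vec{B}_M^0=\vec{M}_M^0\vec{D}_M$ is invertible. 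The pencil $\vec{N}_M$ is then the quotient of a symmetric matrix by a symmetric positive-definite one, so it admits a complete set of real eigenvalues and is diagonalizable. This proves the hyperbolicity of \eqref{eq:moment1}.

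For the spectral-radius bound, every eigenvalue $\lambda$ of $\vec{N}_M$ equals the Rayleigh quotient
\[
\lambda=\frac{\vec{v}^T\left(\sum_i\tilde n_i\vec{M}_M^i\right)\vec{v}}{\vec{v}^T\vec{M}_M^0\vec{v}}
=\frac{\displaystyle\int_{\mathbb{R}^3}\frac{\tilde n\cdot\vec{p}}{p^0}\,\frac{w^2}{g^{(0)}_{[\vec{u},\theta]}}\,d^3\vec{p}}{\displaystyle\int_{\mathbb{R}^3}\frac{w^2}{g^{(0)}_{[\vec{u},\theta]}}\,d^3\vec{p}},
\]
a positive-weight average of $(\tilde n\cdot\vec{p})/p^0$. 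Because $p^0=\sqrt{|\vec{p}|^2+1}>|\vec{p}|\ge|\tilde n\cdot\vec{p}|$ for the unit vector $\tilde n$, the integrand lies strictly in $(-1,1)$ pointwise, so each such average lies in $(-1,1)$; maximizing over the compact unit sphere of $\vec{v}$ then shows the spectral radius of $\vec{B}_M$ is strictly less than one.

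The hard part is strict hyperbolicity, namely that the $N_M$ real eigenvalues are distinct. My plan is to exploit the rotational covariance of the construction: real spherical harmonics of fixed degree $\ell$ transform among themselves under spatial rotations while the $E$-quadrature is rotation-independent, so one may align $\tilde n$ with the polar axis and reduce $\vec{N}_M$ to the quasi-1D pencil in $(p^0,p^3)$ of Section \ref{subsec:reduction}. There the eigenvalues are governed by the Jacobi matrices $\vec{J}^{(\ell)}_{M-\ell}$ and the coupling matrices $\vec{\tilde{J}}^{(\ell)}_{M-\ell}$, whose spectral data are the zeros $x_{i,k}^{(\ell)}$; the strict interlacing of these zeros within one family (Theorem \ref{thm:zerointerself}) and across adjacent parameters (Theorem \ref{thm:zerointer01}) forces simplicity of the eigenvalues inside each azimuthal sector. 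The genuine obstacle, and the step I expect to require the most care, is excluding accidental coincidences of eigenvalues arising from different $(\ell,m)$ sectors; I would address this by tracking the sign patterns of $P_k^{(\ell)}$ at the quadrature nodes and the monotonicity in $\zeta$ provided by Corollaries \ref{col:sign} and \ref{col:varx} to separate the sectors.
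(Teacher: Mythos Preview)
Your argument for hyperbolicity (real diagonalizability) and for the spectral-radius bound is essentially the paper's own proof: strip off $\vec{D}_M$ by similarity, identify $\vec{M}_M^\alpha$ as the symmetric matrix of the multiplication-by-$p^\alpha$ operator in the orthonormal basis, and use positive-definiteness of $\vec{M}_M^0$. Your Rayleigh-quotient phrasing of the spectral-radius bound is equivalent to the paper's definiteness argument for $\lambda\vec{M}_M^0-\sum_i\tilde n_i\vec{M}_M^i$ when $|\lambda|>1$.

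The place where you diverge from the paper is the final paragraph on \emph{strict} hyperbolicity. You are right that the statement says ``strictly hyperbolic'', and you take this literally as ``all $N_M$ eigenvalues are simple''. The paper, however, does \emph{not} prove simplicity of the eigenvalues at all: its proof ends after establishing real diagonalizability and the spectral-radius bound. So the word ``strictly'' in the statement is a misnomer (or is being used loosely); what is actually proved, and all that is needed downstream, is hyperbolicity in the ordinary sense.

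Moreover, your proposed route to genuine simplicity almost certainly fails. After you align $\tilde n$ with the polar axis, the pencil block-diagonalizes by the azimuthal index $m$, and by the very construction with real spherical harmonics the blocks for $m$ and $-m$ (with $m\neq 0$) are identical: $Y_{\ell,m}$ and $Y_{\ell,-m}$ share the same $\check P_\ell^{|m|}$ factor and differ only by $\cos(|m|\phi)$ versus $\sin(|m|\phi)$, which plays no role once $\tilde n$ is along the axis. Hence every eigenvalue coming from a sector with $m\neq 0$ is at least double, and the system cannot be strictly hyperbolic in the sense of distinct eigenvalues for $M\ge 1$. The interlacing Theorems~\ref{thm:zerointerself} and~\ref{thm:zerointer01} say nothing across the $\pm m$ pair, so your separation strategy cannot succeed there. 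Drop that last paragraph; the rest of your proof already delivers exactly what the paper's proof delivers.
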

\begin{proof}
Due to Lemma \ref{lem:Dinv}, $\vec{D}_{M}$ is invertible.
According to \eqref{eq:MtMx}, one knows that
\[
\vec{M}_{M}^{\alpha}=<p^{\alpha}\vec{\mathcal{P}}_{M}[\vec{u},\theta],\vec{\mathcal{P}}_{M}[\vec{u},\theta]^{T}>_{g^{(0)}_{[\vec{u},\theta]}}.
\]
It is obvious that $\vec{M}_{M}^{\alpha}$ is symmetric, and for a given $N_M$-dimensional nonzero vector $\vec{q}$,
\begin{align*}
\vec{q}^{T}\vec{M}_{M}^{0}\vec{q}&=
<p^{\alpha}\vec{q}^{T}\vec{\mathcal{P}}_{M}[\vec{u},\theta],\vec{\mathcal{P}}_{M}[\vec{u},\theta]^{T}\vec{q}>_{g^{(0)}_{[\vec{u},\theta]}}\\
&=<p^{\alpha}\vec{q}^{T}\vec{\mathcal{P}}_{M}[\vec{u},\theta],\vec{q}^{T}\vec{\mathcal{P}}_{M}[\vec{u},\theta]>_{g^{(0)}_{[\vec{u},\theta]}}>0,
\end{align*}
thus $\vec{M}_{M}^{0}$ is positive definite. Therefore $\vec{B}_{M}^{0}$ is invertible.

Furthermore, one has
\begin{align*}
\vec{B}_{M}&=\vec{D}_{M}^{-1}(\vec{M}_{M}^{0})^{-1}\sum_{i=1}^{3}\tilde{n}_{i}\vec{M}_{M}^{i}\vec{D}_{M}\\
&=\left((\vec{M}_{M}^{0})^{\frac{1}{2}}\vec{D}_{M}\right)^{-1}
(\vec{M}_{M}^{0})^{-\frac{1}{2}}\sum_{i=1}^{3}\tilde{n}_{i}\vec{M}_{M}^{i}
(\vec{M}_{M}^{0})^{-\frac{1}{2}}\left((\vec{M}_{M}^{0})^{\frac{1}{2}}\vec{D}_{M}\right).
\end{align*}
Thus the matrix
$\vec{B}_{M}$ is real diagonalizable since $(\vec{M}_{M}^{0})^{-\frac{1}{2}}\sum_{i=1}^{3}\tilde{n}_{i}\vec{M}_{M}^{i}(\vec{M}_{M}^{0})^{-\frac{1}{2}}$ is symmetric,
and the spectral radius of $\vec{B}_{M}$ is equal to that of
$(\vec{M}_{M}^{0})^{-1}\sum_{i=1}^{3}\tilde{n}_{i}\vec{M}_{M}^{i}$ and
satisfies
\[
\lambda \vec{M}_{M}^{0} -
 \sum_{i=1}^{3}\tilde{n}_{i}\vec{M}_{M}^{i}=
<(\lambda p^{0}-\sum_{i=1}^{3}p^{i})\vec{\mathcal{P}}_{M}[\vec{u},\theta],\vec{\mathcal{P}}_{M}[\vec{u},\theta]^{T}>_{g^{(0)}_{[\vec{u},\theta]}},
\]
for $|\lambda|>1$. Moreover, for a given $N_M$-dimensional nonzero vector $\vec{q}$, one has
\begin{align*}
\vec{q}^{T}&\left(\lambda \vec{M}_{M}^{0} - \sum_{i=1}^{3}\tilde{n}_{i}\vec{M}_{M}^{i}\right)\vec{q}=
<p^{\alpha}\vec{q}^{T}\vec{\mathcal{P}}_{M}[\vec{u},\theta],\vec{\mathcal{P}}_{M}[\vec{u},\theta]^{T}\vec{q}>_{g^{(0)}_{[\vec{u},\theta]}}\\
=&\left<(\lambda p^{0}-\sum_{i=1}^{3}p^{i})\vec{q}^{T}\vec{\mathcal{P}}_{M}[\vec{u},\theta],\vec{q}^{T}
\vec{\mathcal{P}}_{M}[\vec{u},\theta]\right>_{g^{(0)}_{[\vec{u},\theta]}}
   \begin{cases}
     >0, &  \lambda>1, \\
     <0, & \lambda<-1,
   \end{cases}
\end{align*}
thus the matrix
\[
\lambda \vec{M}_{M}^{0} - \sum_{i=1}^{3}\tilde{n}_{i}\vec{M}_{M}^{i}
\]
is positive (resp. negative) definite for $\lambda>1$ (resp. $\lambda<-1$).
Therefore the spectral radius of $(\vec{M}_{M}^{0})^{-1}\sum_{i=1}^{3}\tilde{n}_{i}\vec{M}_{M}^{i}$ is less than one, and the proof is completed.
\qed\end{proof}
\subsection{Linear stability}
\label{subsec:LinearS}
It is obvious that the moment system \eqref{eq:moment1} has
 the local equilibrium solution $\vec{W}_{M}^{(0)}=(n_{0},\vec{u}_{0},\theta_{0},0,\cdots,0)^{T}$, where $n_{0}$, $\vec{u}_{0}$,
 and $\theta_{0}$ satisfy three inequalities in \eqref{eq:positivity}.
 Similar to the procedure in \cite{LS:2016,Kuang:2017},   the moment system \eqref{eq:moment1}--\eqref{eq:colAWfinal} is  linearized at $\vec{W}_{M}^{(0)}$.
 If assuming that $\vec{W}_{M}=\vec{W}_{M}^{(0)}(1+\vec{\bar{W}}_{M})$ and each component  of $\vec{\bar{W}}_{M}$ is small, then the linearized moment system becomes
\begin{equation}
\label{eq:LME}
\vec{B}_{M}^{\alpha}\big|_{\vec{W}_{M}^{(0)}}\frac{\partial \bar{\vec{W}}_{M}}{\partial x^{\alpha}}=\vec{Q}_{M}\big|_{\vec{W}_{M}^{0}}\bar{\vec{W}}_{M},
\end{equation}
where
\[
\vec{Q}_{M}=-\frac{1}{\tau}U_{\alpha}\vec{M}_{M}^{\alpha}\tilde{\vec{D}}_{M}^{W}.
\]

Assume that the solution $\bar{\vec{W}}_{M}$ is of the plane wave form
\[
\bar{\vec{W}}_{M}=\tilde{\vec{W}}_{M}\exp(i(\omega t-\vec{k}\vec{x})),
\]
where $i$ is the imaginary unit, $\tilde{\vec{W}}_{M}$ is the nonzero amplitude, and $\omega$ and ${ \vec k}$ denote
  the frequency and  wave number, respectively.
Substituting the above plane wave into \eqref{eq:LME} gives
\[
\left(i\omega\vec{B}_{M}^{0}-ik_{j}\vec{B}_{M}^{j}-\vec{Q}_{M}\right)\big|_{\vec{W}_{M}^{(0)}}\tilde{\vec{W}}_{M}=0.
\]
Because the amplitude $\tilde{\vec{W}}_{M}$ is nonzero,  the above coefficient matrix  is singular, i.e.
\begin{equation}
\label{eq:LS}
\det\left(i\omega\vec{B}_{M}^{0}-ik_{j}\vec{B}_{M}^{j}-\vec{Q}_{M}\right)\big|_{\vec{W}_{M}^{(0)}}=0,
\end{equation}
which implies   the dispersion relation between $\omega$ and ${\vec k}$.

\begin{thm}
\label{thm:LS}
The  moment system \eqref{eq:moment1} with the source term \eqref{eq:colAWfinal}
is linearly stable in time at the local equilibrium, that is,
the linearized moment system \eqref{eq:LME} is stable in the sense that
$Im(\omega(\vec k))\geq0$ for each $\vec k\in\mathbb{R}^{3}$.
\end{thm}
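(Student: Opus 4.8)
The plan is to analyze the dispersion relation \eqref{eq:LS} directly through the quadratic form it induces, exploiting the symmetric structure of the coefficient matrices already established in the proof of Theorem \ref{thm:hyper}. Since $\vec{M}_{M}^{0}$ is symmetric positive definite and $\vec{D}_{M}$ is invertible by Lemma \ref{lem:Dinv}, the matrix $\vec{B}_{M}^{0}=\vec{M}_{M}^{0}\vec{D}_{M}$ is invertible, so the matrix pencil in \eqref{eq:LS} is regular and every root $\omega=\omega(\vec k)$ of \eqref{eq:LS} possesses a nonzero null vector $\tilde{\vec{W}}_{M}$ with
\[
\left(i\omega\vec{M}_{M}^{0}-ik_{j}\vec{M}_{M}^{j}\right)\vec{D}_{M}\tilde{\vec{W}}_{M}=\vec{Q}_{M}\tilde{\vec{W}}_{M}.
\]
It therefore suffices to prove $\mathrm{Im}(\omega)\ge 0$ for each such pair. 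Recalling from \eqref{eq:MtMx} that $\vec{M}_{M}^{\alpha}=<p^{\alpha}\vec{\mathcal{P}}_{M}[\vec{u},\theta],\vec{\mathcal{P}}_{M}[\vec{u},\theta]^{T}>_{g^{(0)}_{[\vec{u},\theta]}}$ is symmetric, I would set $\vec{v}:=\vec{D}_{M}\tilde{\vec{W}}_{M}$ and left-multiply by the conjugate $\vec{v}^{*}$.

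The key algebraic observation is that the relaxation matrix factors as $\vec{Q}_{M}=-\tau^{-1}\vec{N}_{M}\tilde{\vec{D}}_{M}^{W}$ with $\vec{N}_{M}:=U_{\alpha}\vec{M}_{M}^{\alpha}=\vec{P}_{M}^{p}\vec{A}_{M}^{0}(\vec{P}_{M}^{p})^{T}=<E\,\vec{\mathcal{P}}_{M}[\vec{u},\theta],\vec{\mathcal{P}}_{M}[\vec{u},\theta]^{T}>_{g^{(0)}_{[\vec{u},\theta]}}$, where the orthogonality relations $U_{\alpha}n_{i}^{\alpha}=0$ and $U_{\alpha}U^{\alpha}=1$ collapse $U_{\alpha}\vec{M}_{M}^{\alpha}$ onto the $\vec{A}_{M}^{0}$ block. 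Because $E=U_{\alpha}p^{\alpha}>0$, this $\vec{N}_{M}$ is a Gram matrix with a positive weight and hence symmetric positive definite. Since $\vec{M}_{M}^{0}$ is symmetric positive definite and the $\vec{M}_{M}^{j}$ are real symmetric, the scalar $\vec{v}^{*}\vec{M}_{M}^{0}\vec{v}$ is real and positive while $ik_{j}\vec{v}^{*}\vec{M}_{M}^{j}\vec{v}$ is purely imaginary; taking the real part of $\vec{v}^{*}(\,\cdot\,)$ annihilates the flux contribution and yields
\[
\mathrm{Im}(\omega)=\frac{1}{\tau}\,\frac{\mathrm{Re}\big(\vec{v}^{*}\vec{N}_{M}\tilde{\vec{D}}_{M}^{W}\tilde{\vec{W}}_{M}\big)}{\vec{v}^{*}\vec{M}_{M}^{0}\vec{v}}.
\]
Thus, since $\tau>0$, the entire theorem reduces to showing that the symmetric part of $\vec{D}_{M}^{T}\vec{N}_{M}\tilde{\vec{D}}_{M}^{W}$ is positive semidefinite at the equilibrium state $\vec{W}_{M}^{(0)}$; this is precisely the discrete counterpart of the symmetric-hyperbolic energy estimate for the change of variable $\vec{g}=\vec{D}_{M}\bar{\vec{W}}_{M}$.

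To settle this last point I would exploit the physical meaning of the two factors. The vector $\tilde{\vec{D}}_{M}^{W}\tilde{\vec{W}}_{M}$ is (up to the fixed scaling $\vec{W}_{M}=\vec{W}_{M}^{(0)}(1+\bar{\vec{W}}_{M})$) the linearized non-equilibrium coefficient vector $\vec{f}_{M}-\vec{f}_{M}^{(0)}$, so that $\vec{N}_{M}\tilde{\vec{D}}_{M}^{W}\tilde{\vec{W}}_{M}$ is the coefficient vector of the Anderson--Witting source $-\tau^{-1}E(\Pi_{M}f-f^{(0)})$, and the manifestly dissipative relaxation form $<E(\Pi_{M}f-f^{(0)}),\Pi_{M}f-f^{(0)}>_{g^{(0)}_{[\vec{u},\theta]}}=(\tilde{\vec{D}}_{M}^{W}\tilde{\vec{W}}_{M})^{*}\vec{N}_{M}(\tilde{\vec{D}}_{M}^{W}\tilde{\vec{W}}_{M})\ge 0$ is nonnegative because $E>0$. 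The remaining task is to show that replacing the second argument by the flux variable $\vec{v}=\vec{D}_{M}\tilde{\vec{W}}_{M}$ does not change the sign, i.e. that the frame-correction part of $\vec{D}_{M}$ (the contribution of $\vec{C}_{M}$ acting on the background $\vec{f}_{M}^{(0)}$ in Step~3) contributes nothing negative to $\mathrm{Re}\big(\vec{v}^{*}\vec{N}_{M}\tilde{\vec{D}}_{M}^{W}\tilde{\vec{W}}_{M}\big)$. I expect this to be the main obstacle: it must be verified from the explicit block forms of $\vec{D}_{M}$ and $\tilde{\vec{D}}_{M}^{W}$ in \eqref{eq:Dmatrix} together with $\vec{N}_{M}=\vec{P}_{M}^{p}\vec{A}_{M}^{0}(\vec{P}_{M}^{p})^{T}$, using crucially that the first five rows of $\vec{Q}_{M}$ vanish by the Landau-frame constraints \eqref{eq:condition-222} and \eqref{eq:condition}. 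Concretely, I would aim to reduce $\mathrm{Re}\big(\vec{v}^{*}\vec{N}_{M}\tilde{\vec{D}}_{M}^{W}\tilde{\vec{W}}_{M}\big)$ to the nonnegative relaxation form above plus a term whose symmetric part vanishes, which then gives $\mathrm{Im}(\omega)\ge 0$ and completes the proof.
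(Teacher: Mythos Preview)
Your strategy coincides with the paper's: both reduce \eqref{eq:LS} to the sign of the Hermitian form built from the relaxation matrix, and both need precisely the statement that (in your notation) the symmetric part of $\vec{D}_{M}^{T}\vec{N}_{M}\tilde{\vec{D}}_{M}^{W}$, equivalently of $\vec{N}_{M}\tilde{\vec{D}}_{M}^{W}\vec{D}_{M}^{-1}$, is positive semidefinite at $\vec{W}_{M}^{(0)}$. Your formula
\[
\mathrm{Im}(\omega)=\frac{1}{\tau}\,\frac{\mathrm{Re}\big(\vec{v}^{*}\vec{N}_{M}\tilde{\vec{D}}_{M}^{W}\tilde{\vec{W}}_{M}\big)}{\vec{v}^{*}\vec{M}_{M}^{0}\vec{v}}
\]
is correct, and your identification of $\vec{N}_{M}=U_{\alpha}\vec{M}_{M}^{\alpha}=\vec{P}_{M}^{p}\vec{A}_{M}^{0}(\vec{P}_{M}^{p})^{T}$ as the symmetric positive definite Gram matrix with weight $E$ is exactly what the paper uses.

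The gap is the step you label ``the main obstacle''. Replacing $\vec{v}=\vec{D}_{M}\tilde{\vec{W}}_{M}$ by $\tilde{\vec{D}}_{M}^{W}\tilde{\vec{W}}_{M}$ is not automatic: the entropic bound $(\tilde{\vec{D}}_{M}^{W}\tilde{\vec{W}}_{M})^{*}\vec{N}_{M}(\tilde{\vec{D}}_{M}^{W}\tilde{\vec{W}}_{M})\ge 0$ that you invoke does not by itself control the cross term with the frame-correction columns of $\vec{D}_{M}$, and your proposed splitting into ``the nonnegative relaxation form plus a term whose symmetric part vanishes'' is asserted but not established. This is exactly where all the content of the proof lies; without it the argument is incomplete. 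The paper closes this gap not by an inequality but by proving something stronger: at equilibrium the matrix $-\tau\,\vec{Q}_{M}\vec{D}_{M}^{-1}=\vec{N}_{M}\tilde{\vec{D}}_{M}^{W}\vec{D}_{M}^{-1}$ is \emph{exactly symmetric}. This comes from a concrete block identity, namely that the Landau-frame constraints force $\vec{M}_{5\times5}^{11}\,\vec{D}_{5\times4}^{12}(\vec{D}_{4\times4}^{22})^{-1}+\vec{M}_{5\times4}^{12}=\vec{O}_{5\times4}$, which simultaneously kills the first five rows of the product and makes the remaining $(4\times4)$ correction block $\vec{M}_{D}=(\vec{M}_{5\times4}^{12})^{T}\vec{D}_{5\times4}^{12}(\vec{D}_{4\times4}^{22})^{-1}=-(\vec{D}_{5\times4}^{12}(\vec{D}_{4\times4}^{22})^{-1})^{T}\vec{M}_{5\times5}^{11}(\vec{D}_{5\times4}^{12}(\vec{D}_{4\times4}^{22})^{-1})$ symmetric. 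Once symmetry is in hand, positive semidefiniteness follows because $\vec{N}_{M}\tilde{\vec{D}}_{M}^{W}\vec{D}_{M}^{-1}$ is congruent via $\vec{N}_{M}^{1/2}$ to a matrix similar to $\tilde{\vec{D}}_{M}^{W}\vec{D}_{M}^{-1}$, whose eigenvalues (read off from its upper-triangular block form at equilibrium) are $0$ and $1$. That symmetry step is the missing idea in your proposal; the rest of your outline is sound.
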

\begin{proof}
 Because
  the matrix $\vec{D}_M$ in \eqref{eq:Dmatrix} at  $\vec{W}_{M}=\vec{W}_{M}^{(0)}$
{can} be reformed as follows
\[
\vec{D}_{M}=\begin{pmatrix}
\vec{D}_{5\times5}^{11} &\vec{D}_{5\times4}^{12} & \vec{O}\\
\vec{O} & \vec{D}_{4\times4}^{22} & \vec{O}\\
\vec{O} & \vec{O} & \vec{I}_{N_{M}-9}
\end{pmatrix},
\]
and its inverse  is given by
\[
\vec{D}_{M}^{-1}=\begin{pmatrix}
\left(\vec{D}_{5\times5}^{11}\right)^{-1} & -\left(\vec{D}_{5\times5}^{11}\right)^{-1}\vec{D}_{5\times4}^{12}\left(\vec{D}_{4\times4}^{22}\right)^{-1} & \vec{O}\\
\vec{O} & \left(\vec{D}_{4\times4}^{22}\right)^{-1} & \vec{O}\\
\vec{O} & \vec{O} & \vec{I}_{N_{M}-9}
\end{pmatrix},
\]
as well as
\[
   \tilde{\vec{D}}_{M}^{W}=\begin{pmatrix}
   \vec{O} &\vec{D}_{5\times4}^{12} & \vec{O}\\
   \vec{O} & \vec{D}_{4\times4}^{22} & \vec{O}\\
   \vec{O} & \vec{O} & \vec{I}_{N_{M}-9}
   \end{pmatrix},
   \]
 the product of $\tilde{\vec{D}}_{M}^{W}$ and $\vec{D}_{M}^{-1}$
 is of the  form
\[
\tilde{\vec{D}}_{M}^{W}\vec{D}_{M}^{-1}=\begin{pmatrix}
\vec{O}_{5\times5}& \vec{D}_{5\times4}^{12}\left(\vec{D}_{4\times4}^{22}\right)^{-1} & \vec{O}_{5\times(N_{M}-9)}\\
\vec{O}_{4\times5}& \vec{I}_{4}& \vec{O}_{4\times(N_{M}-9)}\\
\vec{O}_{(N_{M}-9)\times3}&\vec{O}_{(N_{M}-9)\times2}&\vec{I}_{N_{M}-9}
\end{pmatrix},
\]
where $\vec{D}_{5\times5}^{11}$ is the $5\times 5$ subblock of the $9\times 9$ upper left subblock of $\vec{D}_{2}$ in the upper left corner,
$\vec{D}_{5\times4}^{12}$ denotes the
$5\times 4$ subblock of the $9\times 9$ upper left subblock of $\vec{D}_{2}$ in the upper right corner,
and $\vec{D}_{4\times4}^{22}$ is $4\times 4$ subblock of the $9\times 9$ upper left subblock of $\vec{D}_{2}$ in the bottom right corner.
{It is obvious that each eigenvalue of $-\tilde{\vec{D}}_{M}^{W}\vec{D}_{M}^{-1}$ is non-positive, so does the matrix
\[
\bar{\vec{Q}}_{M}:=-\frac{1}{\tau}\left(U_{\alpha}\vec{M}_{M}^{\alpha}\right)^{\frac{1}{2}}\tilde{\vec{D}}_{M}^{W}\vec{D}_{M}^{-1}\left(U_{\alpha}\vec{M}_{M}^{\alpha}\right)^{-\frac{1}{2}}.
\]

The matrix $ U^{\alpha}\vec{M}_{M}^{\alpha}$ can be written as follows
\[
\begin{pmatrix}
\vec{M}_{5\times5}^{11}&\vec{M}_{5\times4}^{12}&\vec{O}_{5\times(N_M-9)}\\
(\vec{M}_{5\times4}^{12})^{T}&\vec{M}_{4\times4}^{22}&\vec{M}_{4\times(N_M-9)}^{23}\\
\vec{O}_{(N_M-9)\times5}&(\vec{M}_{4\times(N_M-9)}^{23})^{T}&\vec{M}_{(N_M-9)\times(N_M-9)}^{33}
\end{pmatrix},
\]
where $\vec{M}_{5\times5}^{11}$ is the $5\times 5$ subblock of $\vec{P}_{2}^{p}\vec{A}_{2}^{0}(\vec{P}_{2}^{p})^{T}$ in the upper left corner, $\vec{M}_{5\times4}^{12}$ denotes the
$5\times 4$ subblock of $\vec{P}_{2}^{p}\vec{A}_{2}^{0}(\vec{P}_{2}^{p})^{T}$ in the upper right corner,
and $\vec{M}_{4\times4}^{22}$ is $4\times 4$ subblock of $\vec{P}_{2}^{p}\vec{A}_{2}^{0}(\vec{P}_{2}^{p})^{T}$ in the bottom right corner,
the rest subblocks form the $(N_M-9)\times (N_M-9)$ bottom right corner of $\vec{P}_{2}^{p}\vec{A}_{M}^{0}(\vec{P}_{M}^{p})^{T}$.
Thus one has
\[
\vec{M}_{D}:=(\vec{M}_{5\times4}^{12})^{T}\vec{D}_{5\times4}^{12}\left(\vec{D}_{4\times4}^{22}\right)^{-1}
=-\left(\vec{D}_{5\times4}^{12}\left(\vec{D}_{4\times4}^{22}\right)^{-1}\right)^{T}\vec{M}_{5\times5}^{11}
\left(\vec{D}_{5\times4}^{12}\left(\vec{D}_{4\times4}^{22}\right)^{-1}\right),
\]
which is symmetric because
$
\vec{M}_{5\times5}^{11}\vec{D}_{5\times4}^{12}\left(\vec{D}_{4\times4}^{22}\right)^{-1}+
\vec{M}_{5\times4}^{12}=\vec{O}_{5\times4}.
$

On  the other hands, because the first {five} components of $\vec{S}(\vec{W}_M)$ are zero,
all elements in  the first five rows and  the first five columns of the matrix
\[
\vec{Q}_{M}=-\frac{1}{\tau}U_{\alpha}\vec{M}_{M}^{\alpha}\tilde{\vec{D}}_{M}^{W}\vec{D}_{M}^{-1},
\]
are zero, and the matrix $\vec{Q}_{M}$ is of form
\[
\vec{Q}_{M}=-\frac{1}{\tau}
\begin{pmatrix}
\vec{O}_{5\times5}&\vec{O}_{5\times4}&\vec{O}_{5\times(N_{M}-9)}\\
\vec{O}_{4\times5}&\vec{M}_{4\times4}^{22}+\vec{M}_{D}&\vec{M}_{4\times(N_{M}-9)}^{23}\\
\vec{O}_{(N_{M}-9)\times5}&(\vec{M}_{4\times(N_{M}-9)}^{23})^{T}&\vec{M}_{(N_{M}-9)\times(N_{M}-9)}^{33}
\end{pmatrix}.
\]
Hence the matrix $\vec{Q}_{M}$  is symmetric.
It is obvious that $\vec{Q}_{M}$ is congruent with $\bar{\vec{Q}}_{M}$,
so it is negative semi-definite.

Because both matrices $\vec{D}_{M}$ and $\vec{M}_{M}^{0}$ are invertible and
$\vec{M}_{M}^{0}$ is positive definite,
\eqref{eq:LS} is equivalent to
\begin{align}
 \det\left(i\omega \vec{I}-i\vec{M}_{M}-\hat{\vec{Q}}_{M}\right)=0,
\end{align}
where
\begin{align*}
\hat{\vec{Q}}_{M}:=& \left(\vec{M}_{M}^{0}\right)^{-\frac{1}{2}}\vec{Q}_{M}\left(\vec{M}_{M}^{0}\right)^{-\frac{1}{2}},
\end{align*}
and
$$\vec{M}_{M}:=\left(\vec{M}_{M}^{0}\right)^{-\frac{1}{2}}k_{i}\vec{M}_{M}^{i}
\left(\vec{M}_{M}^{0}\right)^{-\frac{1}{2}}.$$
It means that the matrix $\hat{\vec{Q}}_{M}$ is congruent with $\vec{Q}_{M}$ and negative semi-definite, and $\vec{M}_{M}$ is symmetric. Using Lemmas 1 and  2 in \cite{LS:2016} completes the
proof.}
\qed\end{proof}

%

\subsection{Lorentz covariance for  quasi 1D case}
\label{subsec:Lorentz}
In physics, the  Lorentz covariance is a key property of space-time following from the special theory of relativity, see e.g. \cite{SR:1961}.
This section studies the Lorentz covariance
of the  quasi 1D  moment system  
in Section \ref{subsec:reduction}, where $x=x_3$.



Some   Lorentz covariant quantities are first pointed out below.

\begin{lemma}
	\label{lem:lorentz}
(i)	Each component of $\vec{\acute{D}}_{M}^{u} d\vec{\acute{W}}_{M}$ is  Lorentz invariant,
where
	$d\vec{\acute{W}}_{M}$ denotes the total differential of $\vec{\acute{W}}_{M}$
and $\vec{\acute{D}}_{M}^{u}:={\rm diag}\{1,U^{0}n_{3}^{3},1,\cdots,1\}$.
(ii)	The matrices $\vec{\acute{A}}_{M}^{0}$, $\vec{\acute{A}}_{M}^{3}$ and source term $\vec{\acute{S}}(\vec{\acute{W}}_{M})$  defined in \eqref{eq:colAWfinal}  are Lorentz invariant.
\end{lemma}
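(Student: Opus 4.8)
The plan is to reduce both parts to the assertion that a short list of quantities are Lorentz scalars under the only transformation relevant to the quasi-1D system: a boost $\Lambda$ in the $x_3$-direction (recall that here $U^1=U^2=0$ and all fields depend on $x_3$ alone). First I would record how the co-moving momentum variables $(E,y,\phi)$ behave under $\Lambda$. Since $E=U_\alpha p^\alpha$ is a contraction of two four-vectors it is manifestly invariant, and so is $\theta$. The geometric heart of the argument is that the tetrad vectors $n_i^\alpha$ of \eqref{eq:ni} transform as genuine four-vectors: in the quasi-1D setting $n_1^\alpha=(0,1,0,0)$ and $n_2^\alpha=(0,0,1,0)$ are untouched by $\Lambda$, while $n_3^\alpha=(U^3,0,0,U^0)$ is obtained from $U^\alpha=(U^0,0,0,U^3)$ by exchanging its time and $x_3$ components, and a one-line computation shows that $\Lambda$ carries $n_3^\alpha$ to $({U'}^{3},0,0,{U'}^{0})$, i.e. to the $n_3$ built from the boosted velocity $U'$.

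Consequently each projection $n_i^\alpha p_\alpha$ is a scalar, so by \eqref{eq:yphi} the polar variable $y=-(E^2-1)^{-1/2}n_3^\alpha p_\alpha$ and the azimuth $\phi$ (fixed by $n_1^\alpha p_\alpha$ and $n_2^\alpha p_\alpha$) are Lorentz invariant. Substituting this into the coefficient definition \eqref{eq:deff1}, where the weight $g^{(0)}_{[\vec{u},\theta]}$ cancels, $d^3\vec{p}/p^0$ is the invariant measure and $f$ is a scalar, shows that every coefficient $f_{i,0}^{(\ell)}=\int_{\mathbb{R}^3}f\,P_i^{(\ell)}(E;\theta^{-1})(E^2-1)^{\ell/2}Y_{\ell,0}(y)\,d^3\vec{p}/p^0$ is a Lorentz scalar; the same reasoning gives the invariance of $n$, $\theta$ and $\Pi$, which are double contractions of $N^\alpha$ and $T^{\alpha\beta}$ with $U$ and $\Delta$. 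Hence every entry of $\vec{\acute{W}}_M$ except the velocity $u$ is already invariant, and so is its differential.

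For part (i) only the velocity slot then requires work. I would first compute $n_3^3=((U^0)^2-1)^{-1}(U^3)^2(U^0-1)+1=U^0$, using $(U^0)^2-(U^3)^2=1$, so that $U^0n_3^3=(U^0)^2=\gamma^2$. Introducing the rapidity $\psi$ via $u=\tanh\psi$ gives $\gamma^2\,du=d\psi$, and because rapidities add under $\Lambda$ (so $\psi'=\psi-\psi_v$) we have $d\psi'=d\psi$; equivalently, differentiating the velocity-addition law $u'=(u-v)/(1-uv)$ yields ${\gamma'}^{2}\,du'=\gamma^2\,du$ in a single step. Thus the velocity component $U^0n_3^3\,du=d\psi$ of $\vec{\acute{D}}_M^u\,d\vec{\acute{W}}_M$ is invariant, which together with the previous paragraph establishes (i).

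For part (ii) the entries of $\vec{\acute{A}}_M^0$ and $\vec{\acute{A}}_M^3$ are assembled only from the recurrence coefficients $a_k^{(\ell)},b_k^{(\ell)},p_k^{(\ell)},q_k^{(\ell)},r_k^{(\ell)}$ of Theorems \ref{thm:rec}--\ref{thm:huxiang} and the harmonic constants $h_{\ell,m}$, all of which are functions of $\theta$ and of integer indices alone; since $\theta$ is invariant these matrices are literally unchanged by $\Lambda$. For the source term \eqref{eq:colAWfinal} I would observe that the columns of $\vec{D}_M^W$ that multiply $u$ vanish (already visible in $\vec{D}_1^W$, whose sole nonzero entry sits in the $(1,1)$ slot), so that $\tilde{\vec{D}}_M^W\vec{W}_M$ reproduces the non-equilibrium coefficient vector whose components are the scalars $f_{i,0}^{(\ell)}$; premultiplying by the invariant matrix $\vec{P}_M^p\vec{A}_M^0(\vec{P}_M^p)^T$ and by the scalar factor $1/\tau$, which depends only on the invariants $n$ and $\theta$, therefore leaves every component invariant. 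I expect the main obstacle to be exactly the geometric step that $n_3^\alpha$ is a four-vector, since this is what makes $y$---and with it the entire tower of moment coefficients---Lorentz invariant; a secondary point demanding care is confirming that the $u$-columns of $\vec{D}_M^W$ vanish for every $M$, so that the source term carries no uncompensated velocity dependence.
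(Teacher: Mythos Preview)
Your proposal is correct and follows essentially the same route as the paper: both show that $E$, $y$, and the measure $d^{3}\vec{p}/p^{0}$ are invariant under the $x_{3}$-boost, conclude via \eqref{eq:deff1} that every coefficient $f_{i,0}^{(\ell)}$ (and hence $n$, $\theta$, $\Pi$) is a Lorentz scalar, and then treat the velocity entry separately. Your rapidity argument $U^{0}n_{3}^{3}\,du=\gamma^{2}\,du=d\psi$ is a clean repackaging of the paper's direct computation $dU^{3}/U^{0}$, and your observation for the source term---that the $u$-columns of $\vec{D}_{M}^{W}$ vanish so that $\tilde{\vec{D}}_{M}^{W}\vec{W}_{M}$ reduces to $\vec{f}_{M}-\vec{f}_{M}^{(0)}$---is exactly the identity the paper invokes.
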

\begin{proof}
(i) Under the given Lorentz boost ($x$ direction)
\[
t'=\gamma(v) (t-vx),\ x'=\gamma(v)(x-vt),\ \gamma(v)=(1-v^2)^{-\frac{1}{2}},
\]
where $v$ is the relative velocity between frames in the $x$-direction,
one has
	\begin{align*}
	&(p^{0})'=\gamma(v)(p^{0}-p^{3}v),\quad (p^{3})'=\gamma(v)(p^{3}-p^{0}v), \\
	&(U^{0})'=\gamma(v)(U^{0}-U^{3}v),\quad (U^{3})'=\gamma(v)(U^{3}-U^{0}v).
	\end{align*}
Thus one further gets
	\[
	E'= (U^{0})'(p^{0})'-(U^{3})'(p^{ 3})'=U^{0}p^{0}-U^{3}p^{3}=E,
	\]
	and
	\begin{align*}
	y'=& ((E^2-1)^{-\frac{1}{2}}(U^{0}p^{3}-U^{3}p^{0}))'
=(E^2-1)^{-\frac{1}{2}}((U^{0})'(p^{3})'-(U^{3})'(p^{0})')\\
 =& (E^2-1)^{-\frac{1}{2}}(U^{0}p^{3}-U^{3}p^{0})=y,
	\\
	\left(\frac{d^{3}\vec{p}}{p^{0}}\right)'=& \frac{(dp^{3}dp^{2}dp^{1})'}{(p^{0})'}=
=\frac{(1-(p^{0})^{-1}p^{3}v)dp^{3}dp^{2}dp^{1}}{p^{0}-p^{3}v}=\frac{d^{3}\vec{p}}{p^{0}}.
		\end{align*}
Combining them with  \eqref{eq:deff1} gives that each component of $\vec{f}_{M}$ is Lorentz invariant, such that
 the last $(\acute{N}_{M}-3)$ components of $\vec{\acute{W}}_{M}$ are also Lorentz invariant.

From \eqref{eq:variable0},
it is not difficult to prove that $n$ and $\theta$ are Lorentz invariant.

Moreover, one has
\[
\left(duU^{0}n_{3}^{3}\right)'=\frac{d(U^{3})'}{(U^{0})'}=\frac{dU^{3}-dU^{0}v}{U^{0}-U^{3}v}
=\frac{dU^{3}-(U^{0})^{-1}U^{3}dU^{3}v}{U^{0}-U^{3}v}=\frac{dU^{3}}{U^{0}}=duU^{0}n_{3}^{3}.
\]
Using the above results completes the proof of the first part.

(ii)  Because   $\vec{\acute{A}}_{M}^{0}$ and  $\vec{\acute{A}}_{M}^{3}$  only depend on $\theta$,
they are Lorentz invariant.
The source term $\vec{\acute{S}}(\vec{\acute{W}}_{M})$ in \eqref{eq:colAWfinal}	can be rewritten into
	\[
	\vec{\acute{S}}(\vec{\acute{W}}_{M})=-\frac{1}{\tau}\vec{\acute{P}}_{M}^{p}\vec{\acute{A}}_{M}^{0}(\vec{\acute{P}}_{M}^{p})^{T}\left(\vec{f}_{M}-\vec{f}^{(0)}_{M}\right),
	\]
	which has been expressed  in terms of 	the Lorentz covariant quantities.
	In fact,  the general source term $\vec{\acute{S}}(\vec{\acute{W}}_{M})$ in
	the  moment system   \eqref{eq:moment1} is also Lorentz invariant.
	The proof is completed.
	\qed\end{proof}

\begin{thm}[Lorentz covariance]
	\label{thm:lorentz}
	The moment system  \eqref{eq:moment1D} with the source term \eqref{eq:colAWfinal}
	 is Lorentz covariant.
\end{thm}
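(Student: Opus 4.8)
The plan is to show that the quasi-1D system \eqref{eq:moment1D} retains its form under the Lorentz boost in the $x=x_{3}$ direction used in Lemma \ref{lem:lorentz}. First I would transform the space-time derivatives by the chain rule, $\partial_{t}=\gamma(v)(\partial_{t'}-v\partial_{x'})$ and $\partial_{x}=\gamma(v)(\partial_{x'}-v\partial_{t'})$ (equivalently $\partial_{t'}=\gamma(v)(\partial_{t}+v\partial_{x})$, $\partial_{x'}=\gamma(v)(v\partial_{t}+\partial_{x})$), and insert them into $\vec{\acute{B}}_{M}^{0}\partial_{t}\vec{\acute{W}}_{M}+\vec{\acute{B}}_{M}^{3}\partial_{x}\vec{\acute{W}}_{M}=\vec{\acute{S}}$. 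Writing $\vec{\acute{B}}_{M}^{\alpha}=\vec{\acute{M}}^{\alpha}\vec{\acute{D}}_{M}$, the claim reduces to three facts: (a) $\vec{\acute{M}}^{\alpha}$ transforms as a contravariant vector, i.e. $(\vec{\acute{M}}^{0})'=\gamma(v)(\vec{\acute{M}}^{0}-v\vec{\acute{M}}^{3})$ and $(\vec{\acute{M}}^{3})'=\gamma(v)(\vec{\acute{M}}^{3}-v\vec{\acute{M}}^{0})$; (b) $\vec{\acute{D}}_{M}\partial_{\alpha}\vec{\acute{W}}_{M}$ transforms as a covariant vector, i.e. $(\vec{\acute{D}}_{M})'\partial_{t'}\vec{\acute{W}}_{M}'=\gamma(v)(\vec{\acute{D}}_{M}\partial_{t}+v\vec{\acute{D}}_{M}\partial_{x})\vec{\acute{W}}_{M}$, with the analogous identity for $x'$; and (c) the source $\vec{\acute{S}}$ is invariant.

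Facts (a) and (c) I expect to be routine. For (a), I would use the quasi-1D expressions $\vec{\acute{M}}^{0}=U^{0}\mathcal{A}^{0}+U^{3}\mathcal{A}^{3}$ and $\vec{\acute{M}}^{3}=U^{3}\mathcal{A}^{0}+U^{0}\mathcal{A}^{3}$, where $\mathcal{A}^{0}:=\vec{\acute{P}}_{M}^{p}\vec{\acute{A}}_{M}^{0}(\vec{\acute{P}}_{M}^{p})^{T}$ and $\mathcal{A}^{3}:=\vec{\acute{P}}_{M}^{p}\vec{\acute{A}}_{M}^{3}(\vec{\acute{P}}_{M}^{p})^{T}$ are Lorentz invariant by Lemma \ref{lem:lorentz}(ii) (they depend only on $\theta$ and on the fixed permutation $\vec{\acute{P}}_{M}^{p}$). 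Substituting the boost rules $(U^{0})'=\gamma(v)(U^{0}-vU^{3})$ and $(U^{3})'=\gamma(v)(U^{3}-vU^{0})$ from Lemma \ref{lem:lorentz} then yields $(\vec{\acute{M}}^{0})'=(U^{0})'\mathcal{A}^{0}+(U^{3})'\mathcal{A}^{3}=\gamma(v)(\vec{\acute{M}}^{0}-v\vec{\acute{M}}^{3})$, and similarly for $(\vec{\acute{M}}^{3})'$. Fact (c) is exactly Lemma \ref{lem:lorentz}(ii).

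The hard part will be (b), because $\vec{\acute{D}}_{M}$ is itself frame-dependent: its entries carry factors such as $U^{0}$, $n_{3}^{3}$ and $(1-\vec{u}^{2})^{-3/2}$, and unlike $\vec{\acute{M}}^{\alpha}$ this frame dependence is entangled with the moment variables, so no column rescaling by a fixed invariant matrix turns $\vec{\acute{D}}_{M}$ into a Lorentz-invariant object. My approach is to identify $\vec{\acute{D}}_{M}\partial_{s}\vec{\acute{W}}_{M}$ with the coordinate vector of $\acute{\Pi}_{M}[\vec{u},\theta]\partial_{s}\acute{\Pi}_{M}[\vec{u},\theta]f$ supplied by \eqref{eq:proj1}. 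The basis functions $\tilde{P}_{k,0}^{(\ell)}[\vec{u},\theta]=g^{(0)}_{[\vec{u},\theta]}P_{k}^{(\ell)}(E;\theta^{-1})\tilde{Y}_{\ell,0}$ are Lorentz invariant, since $E$, $y$ and $\theta$ are invariant (Lemma \ref{lem:lorentz}) and $g^{(0)}_{[\vec{u},\theta]}$ depends on the momentum only through $E$; hence the projection $\acute{\Pi}_{M}$ commutes with the boost and the only non-scalar contribution comes from $\partial_{s}$, which supplies precisely the covariant transformation in (b). Concretely, I would carry out the bookkeeping with Lemma \ref{lem:lorentz}(i): every component of $\vec{\acute{W}}_{M}$ except $u$ is a Lorentz scalar, and $\vec{\acute{D}}_{M}^{u}\,d\vec{\acute{W}}_{M}$ isolates the single non-scalar variable $u$, whose differential becomes the invariant $dU^{3}/U^{0}$ after multiplication by $U^{0}n_{3}^{3}$. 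Tracking the chain-rule transforms of $E$, $y$, $\phi$, $g^{(0)}_{[\vec{u},\theta]}$ and of the primitive derivatives through $\vec{\acute{D}}_{M}$ then reassembles $\{\vec{\acute{D}}_{M}\partial_{\alpha}\vec{\acute{W}}_{M}\}_{\alpha\in\{0,3\}}$ into a covariant vector, establishing (b).

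Finally I would assemble the pieces. Substituting (a) and (b) into the primed system $(\vec{\acute{B}}_{M}^{0})'\partial_{t'}\vec{\acute{W}}_{M}'+(\vec{\acute{B}}_{M}^{3})'\partial_{x'}\vec{\acute{W}}_{M}'=\vec{\acute{S}}'$ and expanding, the four cross terms combine through the identity $\gamma(v)^{2}(1-v^{2})=1$, so that the primed equation collapses back to $\vec{\acute{M}}^{0}\vec{\acute{D}}_{M}\partial_{t}\vec{\acute{W}}_{M}+\vec{\acute{M}}^{3}\vec{\acute{D}}_{M}\partial_{x}\vec{\acute{W}}_{M}=\vec{\acute{S}}$ by (c). This is the original system, so \eqref{eq:moment1D} is Lorentz covariant.
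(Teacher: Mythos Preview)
Your proposal is correct and follows essentially the same three-part decomposition as the paper: (a) the matrices $\vec{\acute{M}}^{\alpha}$ transform contravariantly via the invariance of $\vec{\acute{A}}_{M}^{0},\vec{\acute{A}}_{M}^{3}$, (b) the vectors $\vec{\acute{D}}_{M}\partial_{\alpha}\vec{\acute{W}}_{M}$ transform covariantly, and (c) the source is invariant by Lemma~\ref{lem:lorentz}(ii); the final contraction then cancels via $\gamma(v)^{2}(1-v^{2})=1$.

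One small correction on (b): your remark that ``no column rescaling by a fixed invariant matrix turns $\vec{\acute{D}}_{M}$ into a Lorentz-invariant object'' is not quite right. The paper's proof does exactly this: it observes that $\hat{\vec{\acute{D}}}_{M}:=\vec{\acute{D}}_{M}(\vec{\acute{D}}_{M}^{u})^{-1}$ is Lorentz invariant (this follows from Step~3 of Section~\ref{subsec:deduction} together with Lemma~\ref{lem:lorentz}), so that $\vec{\acute{D}}_{M}\partial_{s}\vec{\acute{W}}_{M}=\hat{\vec{\acute{D}}}_{M}\bigl(\vec{\acute{D}}_{M}^{u}\partial_{s}\vec{\acute{W}}_{M}\bigr)$, and then verifies directly that $\bigl(\vec{\acute{D}}_{M}^{u}\partial_{t}\vec{\acute{W}}_{M}\bigr)'=\vec{\acute{D}}_{M}^{u}\gamma(v)\bigl(\partial_{t}\vec{\acute{W}}_{M}+v\partial_{x}\vec{\acute{W}}_{M}\bigr)$ using Lemma~\ref{lem:lorentz}(i). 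This is precisely the bookkeeping you describe in your last sentence of (b), so your argument goes through; only the preceding sentence overstates the difficulty. Your additional projection-based justification---that the basis $\tilde{P}_{k,0}^{(\ell)}$ and hence $\acute{\Pi}_{M}$ are boost-invariant, so the coordinate vector in \eqref{eq:proj1} inherits its transformation solely from $\partial_{s}$---is a clean conceptual explanation that the paper does not include, and it gives a more structural reason why the factorization works.
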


\begin{proof}
	
	From  the 3rd step in Section \ref{subsec:deduction} and Lemma  \ref{lem:lorentz},  one  knows that
	$\hat{\vec{\acute{D}}}_{M}=\vec{\acute{D}}_M(\vec{\acute{D}}_{M}^{u})^{-1}$ can be expressed in terms of the Lorentz covariant quantities, so it is Lorentz invariant.
	Because
	\begin{align*}
	(\vec{\acute{M}}_{M}^{0})' &=\gamma(v)(U^{3}-U^{0}v)\vec{\acute{P}}_{M}^{p}\vec{\acute{A}}_{M}^{3}(\vec{\acute{P}}_{M}^{p})^{T}+\gamma(v)(U^{0}-U^{3}v)\vec{\acute{P}}_{M}^{p}\vec{\acute{A}}_{M}^{0}(\vec{\acute{P}}_{M}^{p})^{T},\\
	 (\vec{\acute{M}}_{M}^{3})'&=\gamma(v)(U^{0}-U^{3}v)\vec{\acute{P}}_{M}^{p}\vec{\acute{A}}_{M}^{3}(\vec{\acute{P}}_{M}^{p})^{T}+\gamma(v)(U^{3}-U^{0}v)\vec{\acute{P}}_{M}^{p}\vec{\acute{A}}_{M}^{0}(\vec{\acute{P}}_{M}^{p})^{T},
	\end{align*}
	and
	\begin{align*}
	\left(\frac{\partial }{\partial t}\right)'=\gamma(v)\left(\frac{\partial }{\partial t}+v\frac{\partial }{\partial x}\right),\quad
	\left(\frac{\partial }{\partial x}\right)'=\gamma(v)\left(\frac{\partial }{\partial x}+v\frac{\partial }{\partial t}\right),
	\end{align*}
	%
	%
	%
	one has
	\begin{align*}
	\left(\vec{\acute{D}}_{M}^{u}\frac{\partial \vec{\acute{W}}_{M}}{\partial t}\right)'=&{\rm diag}\left\{1,\big(U^{0}n_{3}^{3}\big)',1,\cdots,1\right\}\gamma(v)\left(\frac{\partial (n,(U^{3})',\theta,\Pi,f_{0,0}^{(1)},f_{0,0}^{(2)},f_{3,0}^{(0)},\cdots,f_{0,0}^{(M)})^{T}}{\partial t}\right.\\
&	+\left.v\frac{\partial (n,(U^{3})',\theta,\Pi,f_{0,0}^{(1)},f_{0,0}^{(2)},f_{3,0}^{(0)},\cdots,f_{0,0}^{(M)})^{T}}{\partial x}\right)\\
	=&{\rm diag}\left\{1,\big((U^{0}\big)^{-1})',1,\cdots,1\right\}\gamma(v)\left(\frac{\partial (n,(U^{3})',\theta,\Pi,f_{0,0}^{(1)},f_{0,0}^{(2)},f_{3,0}^{(0)},\cdots,f_{0,0}^{(M)})^{T}}{\partial t}\right.\\
&	+\left.v\frac{\partial (n,(U^{3})',\theta,\Pi,f_{0,0}^{(1)},f_{0,0}^{(2)},f_{3,0}^{(0)},\cdots,f_{0,0}^{(M)})^{T}}{\partial x}\right)\\
	=&\vec{\acute{D}}_{M}^{u}\gamma(v)\left(\frac{\partial \vec{\acute{W}}_{M}}{\partial t}+v\frac{\partial \vec{\acute{W}}_{M}}{\partial x}\right),
	\end{align*}
	where  the last equal sign is derived by  following the proof of Lemma \ref{lem:lorentz}.
	Similarly, one has
	\begin{equation*}
	\left(\vec{\acute{D}}_{M}^{u}\frac{\partial \vec{\acute{W}}_{M}}{\partial x}\right)'=\vec{\acute{D}}_{M}^{u}\gamma(v)\left(\frac{\partial \vec{\acute{W}}_{M}}{\partial x}+v\frac{\partial \vec{\acute{W}}_{M}}{\partial t}\right).
	\end{equation*}
	Thus one yields
	\begin{align*}
	&\left(\vec{\acute{B}}_{M}^{0}\frac{\partial \vec{\acute{W}}_{M}}{\partial t} +  \vec{\acute{B}}_{M}^{3}\frac{\partial \vec{\acute{W}}_{M}}{\partial x}\right)'\\
	=&(\vec{\acute{M}}_{M}^{0})'\left(\vec{\acute{D}}_{M}\frac{\partial \vec{\acute{W}}_{M}}{\partial t}\right)'+(\vec{\acute{M}}_{M}^{3})'\left(\vec{\acute{D}}_{M}\frac{\partial \vec{\acute{W}}_{M}}{\partial x}\right)'\\
	=&\left((U^{3})'\vec{\acute{P}}_{M}^{p}\vec{\acute{A}}_{M}^{3}(\vec{\acute{P}}_{M}^{p})^{T}+ (U^{0})'\acute{P}_{M}^{p}\vec{\acute{A}}_{M}^{0}(\acute{P}_{M}^{p})^{T}\right)\vec{\acute{D}}_{M}\left(\gamma(v)\left(\frac{\partial \vec{\acute{W}}_{M}}{\partial t}+v\frac{\partial \vec{\acute{W}}_{M}}{\partial x}\right)\right)\\
	&+\left((U^{0})'\vec{\acute{P}}_{M}^{p}\vec{\acute{A}}_{M}^{3}(\vec{\acute{P}}_{M}^{p})^{T}+ (U^{3})'\vec{\acute{P}}_{M}^{p}\vec{\acute{A}}_{M}^{0}(\vec{\acute{P}}_{M}^{p})^{T}\right)\vec{\acute{D}}_{M}\left(\gamma(v)\left(\frac{\partial \vec{\acute{W}}_{M}}{\partial x}+v\frac{\partial \vec{\acute{W}}_{M}}{\partial t}\right)\right)\\
	=&\left(U^{3}v\vec{\acute{P}}_{M}^{p}\vec{\acute{A}}_{M}^{3}(\vec{\acute{P}}_{M}^{p})^{T}+U^{0}\vec{\acute{P}}_{M}^{p}\vec{\acute{A}}_{M}^{0}(\vec{\acute{P}}_{M}^{p})^{T}\right)
	\vec{\acute{D}}_{M}\frac{\partial \vec{\acute{W}}_{M}}{\partial t}\\
	&+\left(U^{0}\vec{\acute{P}}_{M}^{p}\vec{\acute{A}}_{M}^{3}(\vec{\acute{P}}_{M}^{p})^{T}+U^{3}\vec{\acute{P}}_{M}^{p}\vec{\acute{A}}_{M}^{0}(\vec{\acute{P}}_{M}^{p})^{T}\right)
	\vec{\acute{D}}_{M}\frac{\partial \vec{\acute{W}}_{M}}{\partial x}\\
	=&\vec{\acute{B}}_{M}^{0}\frac{\partial \vec{\acute{W}}_{M}}{\partial t} +  \vec{\acute{B}}_{M}^{3}\frac{\partial \vec{\acute{W}}_{M}}{\partial x}.
	\end{align*}
	Combining it with Lemma \ref{lem:lorentz}  completes the proof.
	\qed\end{proof}

\section{Conclusions}
\label{sec:conclud}
The paper derived the  arbitrary order globally hyperbolic moment system of
the three-dimensional special relativistic Boltzmann equation for the first time
 and studied the hyperbolicity and linear stability of the moment system,
  and Lorentz covariance of the quasi-1D moment system.
The technique was the model reduction  by the operator projection \cite{MR:2014} and \cite{Kuang:2017}.
 The key contributions were the real spherical harmonics
 and  families of the Grad type orthogonal polynomials
were used to establish the bases of the weighted polynomial spaces and
 the careful study on
 their recurrence relations and derivatives as well as the zeros of the Grad type orthogonal polynomials were given.
%
%
It is interesting to  develop  robust, high order accurate numerical schemes for the moment system
and   find other basis for the derivation of  moment system with some good property, e.g. non-negativity.

\section*{Acknowledgements}
This work was partially supported by
{ the Special Project on High-performance Computing under the National Key R\&D Program (No. 2016YFB0200603),
Science Challenge Project (No. JCKY2016212A502), and}
the National Natural Science Foundation
of China (Nos.  91330205, 91630310, \& 11421101).

\end{document}